\newcommand{\dd}{\mathrm{d}}
\def\Xint#1{\mathchoice
	{\XXint\displaystyle\textstyle{#1}}%
	{\XXint\textstyle\scriptstyle{#1}}%
	{\XXint\scriptstyle\scriptscriptstyle{#1}}%
	{\XXint\scriptscriptstyle\scriptscriptstyle{#1}}%
	\!\int}
\def\XXint#1#2#3{{\setbox0=\hbox{$#1{#2#3}{\int}$ }
		\vcenter{\hbox{$#2#3$ }}\kern-.6\wd0}}
\def\dashint{\Xint-}
\begin{document}
\title[Higher integrability and stability of $(p,q)$-quasiminimizers]
{Higher integrability and stability of $(p,q)$-quasiminimizers}

\author[A. Nastasi, C. Pacchiano Camacho]
{Antonella Nastasi, Cintia Pacchiano Camacho}

\address{Antonella Nastasi \newline
University of Palermo, Department of Mathematics and Computer Science, Via Archirafi 34, 90123, Palermo, Italy}
\email{antonella.nastasi@unipa.it, antonellanastasi.math@gmail.com}

\address{Cintia Pacchiano Camacho\newline
	Aalto University, Department of Mathematics and Systems Analysis, Espoo, Finland}
\email{cintia.pacchiano@aalto.fi}

\subjclass[2010]{primary 31E05; secondary 30L99, 46E35}
\keywords{$(p,q)$-Laplace operator; measure metric spaces; minimal $p$-weak upper gradient; minimizer.}

\begin{abstract}
Using purely variational methods, we prove local and global higher integrability results for upper gradients of quasiminimizers of a $(p,q)$-Dirichlet integral with fixed boundary data, assuming it belongs to a slightly better Newtonian space.  We also obtain a stability property with respect to the varying exponents $p$ and $q$. The setting is a doubling metric measure space supporting a Poincar\'e inequality. 
\end{abstract}

\maketitle
\numberwithin{equation}{section}
\newtheorem{theorem}{Theorem}[section]
\newtheorem{lemma}[theorem]{Lemma}
\newtheorem{proposition}[theorem]{Proposition}
\newtheorem{remark}[theorem]{Remark}
\newtheorem{definition}[theorem]{Definition}
\newtheorem{corollary}[theorem]{Corollary}
\allowdisplaybreaks

\section{Introduction}
We study regularity for the gradient of quasiminimizers to the following $(p,q)$-Dirichlet integral 
\begin{equation}\label{J}
	\int_{\Omega}  (a g_{u}^p + b g_{u}^q) \,\dd \mu,
\end{equation}
in the context of metric measure spaces, where $1<p<q$ and $g_u$ is the minimal $q$-weak upper gradient of $u$. Here, $(X,d,\mu)$ is a complete metric measure space endowed with a metric $d$ and a doubling measure $\mu$, supporting a weak $(1,p)$-Poincar\'e inequality and $\Omega\subset X$ is an open bounded set, whose complement satisfies a uniform $p$-fatness condition. Furthermore, we consider some coefficient functions $a$ and $b$ to be measurable and satisfying $0\leq\alpha\leq a,b \leq\beta$, for some positive constants $\alpha$ and $\beta$. This study extends the work in \cite{MZG} since this only concerns $p$-quasiminimizers. We give not only qualitative properties but also quantitative ones providing an explicit analysis on the dependencies of
the constants.

Quasiminimizers were originally introduced by Giaquinta and Giusti \cite{GG1,GG2} as a unifying tool. The main advantage is that the definition of quasiminimizers in $\mathbb{R}^n$ depends only on moduli of gradients, therefore we can substitute them by upper gradients. This way, the theory of partial differential equations is generalized to metric measure spaces. Quasiminimizers have been an active research topic for several years in the setting of doubling metric measure spaces supporting a Poincar\'e inequality. For instance, the boundary continuity for $(p,q)$-quasiminimizers on a bounded set $\Omega$ with fixed boundary data has been examined in \cite{NP}, furthermore, it was proven that $(p,q)$-quasiminimizers are (locally) H\"older continuous. In \cite{KMM}, Kinnunen, Marola, and Martio proved that an increasing sequence of quasiminimizers converges locally uniformly to a quasiminimizer, provided that the limit function is finite at some point.

In this manuscript, we first show a global higher integrability for upper gradients of $(p,q)$-quasiminimizers of the Dirichlet integral \eqref{J} with fixed boundary data, see Theorem \ref{Theorem 5.2}. In the Euclidean setting, the first results concerning local higher integrability are by Bojarski \cite{Boj} and Elcrat and Meyers \cite{EM}. For more local results, see also \cite{CW, GM, G, Z}. In \cite{Gra}, Granlund showed that if the complement of the domain satisfies a certain measure density condition, then minimizers have a global higher integrability property. Later, Kilpel\"ainen and Koskela \cite{KilKos} generalized this result to a uniform capacity density condition. In order to prove global higher integrability, we also require a regularity condition for the complement of the domain, more specifically, $X\setminus\Omega$ is assumed to be uniformly $p$-fat. The main steps in our proof are showing that minimal upper gradients satisfy a reverse type H\"older inequality, applying Gehring's lemma \cite{Maa, ZG} and finally, generalizing the resulting local higher integrability to the entire $\Omega$. Therefore, an appropriate covering argument is needed. Since we are considering quasiminimizers with fixed boundary data, we are able to work near and also on the boundary.  Consequently, we can cover $\Omega$ by balls that are inside the set, together with those that intersect the complement. On the one hand, when working inside $\Omega$, a De Giorgi type inequality \eqref{5.4} implies that the minimal upper gradient satisfies a reverse H\"older inequality. On the other hand, we have to be careful when working up to the boundary and use some delicate extra tools. Here the $p$-fatness of the complement plays an important role. Furthermore, we need self-improving properties of the Poincar\'e inequality (see \cite{KZ}) and the $p$-fatness condition (see \cite{BMS}). These results allow us to use a capacity version of a Sobolev-Poincar\'e type inequality, i.e. a Maz'ya type estimate (see \cite{B}) and so, obtain the desired reverse H\"older inequality. Finally, since $\Omega$ is by hypothesis a bounded set, it is possible to obtain a finite covering using balls that either are contained in $\Omega$ or  intersect its complement. There is a rich literature concerning higher integrability results in the Euclidean  setting regarding both elliptic and parabolic cases, see for example \cite{BDKS, CM, ELM, GS, LM1, MSSS}. In particular, Colombo and Mingione \cite{CM} prove local regularity of the gradient for minimizers for double phase variational problems. Instead, for the metric setting we refer the reader to \cite{MZG} for the elliptic case and to \cite{FH, MMM, MM} for the parabolic case.

The other result obtained in our study is a stability property, see Theorem \ref{Theorem 1.1.}. We consider a sequence $(u_i)$ of $(p_i,q_i)$-quasiminimizers of the corresponding integral \eqref{J}. We assume that all functions $u_i$ have the same boundary data and quasiminimizing constant. Unlike the Euclidean case studied in \cite{KLM}, where the authors prove that minimizers with varying exponent converge to the solution of the limit problem in the Sobolev space, we need to assume a priori that the sequence of $(p_i,q_i)$-quasiminimizers converges to a certain $u$. Roughly speaking, we need this convergence assumption from the beginning because when working with quasiminimizers, instead of minimizers, we loose uniqueness and thus, the uniqueness of the limit. We prove that if the sequences $(p_i)$ and $(q_i)$ converge respectively to $p$ and $q$, then there exists a $(p,q)$-quasiminimizer $u$ of \eqref{J} with the same boundary data and, furthermore, the sequence $(u_i)$ converges to $u$. To be able to prove this convergence, we use the global higher integrability. Li and Martio \cite{LM} examined a quasilinear elliptic operator and proved a corresponding convergence result for solutions of an obstacle problem in a bounded subset of $\mathbb{R}^n$. Later, they proved a similar result for a double obstacle problem \cite{LM1}. For more references concerning stability results in the Euclidean setting, we refer the reader to \cite{KP, LM, L2} and the references therein.

This paper is motivated by the work of Maasalo and Zatorska-Goldstein \cite{MZG} and is a continuation of \cite{NP}. The novelty is that, as already anticipated, we include both $p$-Laplace and $q$-Laplace operators, involving also some measurable coefficient functions $a$ and $b$, assuming only they are bounded away from zero and infinity. This condition over the coefficients is essential for our approach, specifically when using Maz'ya type estimate for the higher integrability up to the boundary part, however it is an open question if it could be relaxed. Already in the Euclidean setting, it would be interesting to find out if one can establish global higher integrability with assumptions as in \cite{CM}. To see more about non standard growth conditions, see \cite{DGV, Ma0, Ma1, Ma2, Ma3}.

The present work is organized as follows: in Section \ref{Sec2} we fix the general setup and we
present basic facts about analytic tools used in metric setting. The results are
stated without proofs. The reader familiar with metric measure spaces may omit this part. In particular, in Section \ref{extrasection}, we present Newtonian spaces, results related to the Poincar\'e inequality and also some useful properties concerning the space with zero boundary values. Section \ref{newsection3} is devoted to introduce the concept of $(p,q)$-quasiminimizers and report the De Giorgi type inequality, as proven in \cite{NP}. Section \ref{Sec3} deals with the higher integrability problem for quasiminimizers and Section \ref{Sec4} contains the proof of the stability result.

\section*{Acknowledgements}
The second author was supported by a doctoral training grant for 2021 from the V\"ais\"al\"a Fund.

\section{Mathematical background}\label{Sec2}
Let $(X, d, \mu)$ be a complete metric measure space, where $\mu$ is a Borel regular measure with $\mu(E)>0$ for every $E\subset X$ non empty open set and $\mu(A)<\infty$ for every $A\subset X$ bounded set. Let $B=B(y,r)\subset X$ be a ball with center $y \in X$ and radius $r>0$. When there is no possibility of confusion, we denote by $\lambda B$ a ball with the same centre as
$B$ but $\lambda$ times its radius.

We set $$u_S= \dfrac{1}{\mu(S)}\int_{S} u \, \dd\mu=\dashint_{S}u\, \dd\mu,$$where $S \subset X$ is a measurable set of finite positive measure and $u: S \to \mathbb{R}$ is a measurable function. Throughout this paper, we will indicate with $C$ all positive constants, even if they assume different values, unless otherwise specified. 

\begin{definition}[\cite{BB}, Section 3.1]
	A measure $\mu$ on $X$ is said to be doubling if there exists a constant $C_d \geq 1$, called the doubling constant, such that for every ball $B$ in $X$
	\begin{equation}\label{doubling}
		0<\mu(2B)\leq C_d \, \mu(B)< \infty.
	\end{equation} 
\end{definition}
\begin{lemma}[\cite{BB}, Lemma 3.3]\label{lemm3.3}
	Let $(X, d, \mu)$ be a metric measure space with $\mu$ doubling. Then there is $Q>0$ such that
	\begin{equation}\label{s}
		\dfrac{\mu(B(y,\rho))}{\mu(B(x, R))}\geq C\left(\dfrac{\rho}{R}\right)^Q,
	\end{equation}
	for all $\rho\in ]0, R]$, $x \in {\Omega}$, $y \in B(x, R)$, where constants $Q$ and $C$ depend only on $C_d$.
\end{lemma}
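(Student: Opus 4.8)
The plan is to reduce this two-point estimate to a statement about concentric balls and then iterate the doubling condition \eqref{doubling}. First I would use that $y\in B(x,R)$ means $d(x,y)<R$, so $B(x,R)\subset B(y,2R)$ and hence $\mu(B(x,R))\le\mu(B(y,2R))$. It therefore suffices to establish
\[
\frac{\mu(B(y,\rho))}{\mu(B(y,2R))}\ge C\Bigl(\frac{\rho}{R}\Bigr)^{Q}
\]
for all $0<\rho\le R$, i.e. to compare two balls with the same centre $y$.

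Next I would choose $k$ to be the smallest positive integer with $2^{k}\rho\ge 2R$, that is $k=\lceil\log_2(2R/\rho)\rceil$; since $\rho\le R$ this is well defined with $k\ge1$, and by definition of the ceiling one has simultaneously $2^{k-1}\rho<2R$, hence $2^{k}<4R/\rho$. With this choice $B(y,2R)\subset B(y,2^{k}\rho)$, and applying \eqref{doubling} successively $k$ times gives $\mu(B(y,2^{k}\rho))\le C_d^{\,k}\,\mu(B(y,\rho))$, so that
\[
\mu(B(y,\rho))\ge C_d^{-k}\,\mu(B(y,2R))\ge C_d^{-k}\,\mu(B(x,R)).
\]

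Finally I would convert the factor $C_d^{-k}$ into a power of $\rho/R$: setting $Q:=\log_2 C_d\ge0$ one has $C_d^{-k}=(2^{-k})^{Q}$, and $2^{-k}>\rho/(4R)$ by the bound $2^{k}<4R/\rho$, whence $C_d^{-k}>4^{-Q}(\rho/R)^{Q}$. This yields the claim with $C=4^{-Q}$, and both $Q$ and $C$ depend only on $C_d$, as asserted. The argument is entirely elementary and presents no genuine obstacle — it is the standard lower mass bound accompanying the doubling property — so the only point that needs attention is the bookkeeping of the exponent $k$: it must be picked so that the inclusion $B(y,2R)\subset B(y,2^{k}\rho)$ and the estimate $2^{k}<4R/\rho$ hold at the same time, which is what guarantees that the resulting constants carry no hidden dependence on $R$, $\rho$, or the geometry of $\Omega$.
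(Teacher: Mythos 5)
Your argument is correct, and it is the standard dyadic-iteration proof of the lower mass bound for doubling measures; the paper itself gives no proof (it cites \cite{BB}, Lemma~3.3), and the proof there runs along exactly these lines. The only cosmetic remark: with $Q=\log_2 C_d$ you only get $Q\ge 0$, whereas the statement asserts $Q>0$; since $(\rho/R)^{Q}$ is non-increasing in $Q$ for $\rho\le R$, one may always replace $Q$ by $\max\{Q,1\}$ to meet the stated form, so this is harmless.
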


\begin{definition}[\cite{MZG}, Section 2.1.7] A metric space $X$ is said to be linearly locally connected, denoted as LLC, if there exist constants $C\geq 1$ and $r_0>0$ such that for all balls $B$ in $X$ with radius at most $r_0$, every pair of distinct points in the annulus $2B\setminus\bar{B}$ can be connected by a curve lying in the annulus $2B\setminus C^{-1}\bar{B}$.
\end{definition}

\subsection{Upper gradients}\label{Upper gradient} We introduce the notion of upper gradient as a way to overcome the lack of a differentiable structure in the metric setting. Upper gradients are a generalization of the modulus of the gradient in the Euclidean case. For further details, we refer the reader to the book by  Bj\"{o}rn and Bj\"{o}rn \cite{BB}.

\begin{definition}[\cite{BB}, Definition 1.13]
	A non negative Borel measurable function $g$ is said to be an upper gradient of function $u: X \to [-\infty, +\infty]$ if, for all compact rectifiable arc length parametrized paths $\gamma$ connecting $x$ and $y$, we have
	\begin{equation}\label{ug}
		|u(x)-u(y)|\leq \int_{\gamma}g\, \dd s,
	\end{equation}
	whenever $u(x)$ and $u(y)$ are both finite and $\int_{\gamma}g \, \dd s= \infty$ otherwise.
\end{definition}
Notice that, as a consequence of the last definition, if $g$ is an upper gradient of  $u$ and $\phi$ is any non negative Borel measurable function, then $g+\phi$ is still an upper gradient of $u$. To avoid this lack of uniqueness, we introduce $q$-weak upper gradients. More specifically, if $g$ satisfies \eqref{ug} for $q$-almost all paths, then $g$ is a $q$-weak upper gradient of $u$. The next result gives us the existence of a unique minimal $q$-weak upper gradient of $u$.
\begin{theorem}[\cite{BB}, Theorem 2.5]\label{mpwug}
	Let $q \in ]1, \infty[$. Suppose that $u\in L^q(X)$ has an $L^q(X)$ integrable $q$-weak upper gradient. Then there exists a $q$-weak upper gradient, denoted with $g_u$, such that $g_u\leq g$ $\mu$-a.e. in $X$, for each $q$-weak upper gradient $g$ of $u$. This $g_u$ is called the minimal $q$-weak upper gradient of $u$.
\end{theorem}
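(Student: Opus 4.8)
The plan is to produce $g_u$ as a function of \emph{minimal} $L^q$-norm inside the class
\[
\mathcal{G}=\bigl\{\,g\ :\ g\ \text{is a nonnegative Borel }q\text{-weak upper gradient of }u,\ g\in L^q(X)\,\bigr\},
\]
and then to upgrade norm-minimality to the pointwise inequality $g_u\le g$ $\mu$-a.e. by a lattice argument. By hypothesis $\mathcal{G}\neq\emptyset$, so $m:=\inf_{g\in\mathcal{G}}\|g\|_{L^q(X)}<\infty$.

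The first and only genuinely nontrivial point is that $\mathcal{G}$ is closed under the operation $\min$: if $g_1,g_2\in\mathcal{G}$ then $\min(g_1,g_2)\in\mathcal{G}$. I would prove this via the curvewise characterization of weak upper gradients: a nonnegative Borel function $g\in L^q(X)$ is a $q$-weak upper gradient of $u$ if and only if for $q$-a.e.\ compact rectifiable arc-length parametrized curve $\gamma$ the map $t\mapsto u(\gamma(t))$ is absolutely continuous with $|(u\circ\gamma)'(t)|\le g(\gamma(t))$ for a.e.\ $t$. Since a countable union of $q$-exceptional curve families is again $q$-exceptional, for $q$-a.e.\ $\gamma$ this bound holds simultaneously for $g_1$ and $g_2$, whence $|(u\circ\gamma)'|\le\min(g_1\circ\gamma,g_2\circ\gamma)$ a.e.\ on the parameter interval; integrating recovers \eqref{ug} for $\min(g_1,g_2)$ along $q$-a.e.\ curve.

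Now take $g_j\in\mathcal{G}$ with $\|g_j\|_{L^q}\to m$ and replace it by $\tilde{g}_j:=\min(g_1,\dots,g_j)$, which lies in $\mathcal{G}$ by the previous step and satisfies $m\le\|\tilde{g}_j\|_{L^q}\le\|g_j\|_{L^q}$; thus the minimizing sequence may be taken pointwise decreasing. Set $g_u:=\inf_j\tilde{g}_j=\lim_j\tilde{g}_j$. By dominated convergence (dominating function $\tilde{g}_1\in L^q$) one gets $\tilde{g}_j\to g_u$ in $L^q(X)$ and $\|g_u\|_{L^q}=m$. Moreover $g_u\in\mathcal{G}$: for $q$-a.e.\ $\gamma$ one has $\int_\gamma\tilde{g}_1\,\dd s<\infty$, so $\int_\gamma\tilde{g}_j\,\dd s\to\int_\gamma g_u\,\dd s$ by dominated convergence, and letting $j\to\infty$ in $|u(x)-u(y)|\le\int_\gamma\tilde{g}_j\,\dd s$ yields $|u(x)-u(y)|\le\int_\gamma g_u\,\dd s$ (for $q$-a.e.\ such $\gamma$ no infinite endpoint value occurs, by the same exceptional-set reasoning). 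Finally, for an arbitrary $g\in\mathcal{G}$ the lattice step gives $\min(g_u,g)\in\mathcal{G}$, hence $\|\min(g_u,g)\|_{L^q}\ge m=\|g_u\|_{L^q}$; since $\min(g_u,g)\le g_u$ pointwise, this forces $\min(g_u,g)=g_u$ $\mu$-a.e., i.e.\ $g_u\le g$ $\mu$-a.e. Uniqueness is then immediate, as two functions with the stated minimality property are $\le$ each other $\mu$-a.e.

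I expect the main obstacle to be the lattice property: the stability of $q$-weak upper gradients under $\min$ is exactly what makes norm-minimality equivalent to pointwise minimality, and it rests on the absolute-continuity-along-$q$-a.e.-curve description of weak upper gradients together with the fact that countably many $q$-exceptional curve families can be discarded at once. The remaining ingredients are soft: monotone/dominated convergence and an elementary comparison of $L^q$-norms.
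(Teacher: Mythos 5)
The paper does not prove this statement; it is cited directly from \cite{BB}, Theorem 2.5, so there is no in-paper proof to compare against. Your argument is correct and is essentially the standard proof from that reference: establish the lattice property (closure of the class of $q$-weak upper gradients under pointwise minimum) via the absolute-continuity-along-$q$-a.e.-curve characterization, build a decreasing norm-minimizing sequence by taking successive minima, pass to the pointwise/monotone limit and verify it stays in the class by discarding a countable union of $q$-exceptional curve families, and finally upgrade $L^q$-norm minimality to $\mu$-a.e.\ pointwise minimality by comparing $g_u$ with $\min(g_u,g)$.

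One small point worth making explicit: as written, the last step yields $g_u\le g$ $\mu$-a.e.\ only for $g\in\mathcal{G}$, i.e.\ for $L^q$-integrable $q$-weak upper gradients, whereas the statement (following \cite{BB}) claims it for \emph{every} $q$-weak upper gradient $g$ of $u$. The extension is immediate once one notes that $\min(g_u,g)\le g_u$ is automatically in $L^q$ and is again a $q$-weak upper gradient; however, the curvewise characterization you invoked for the lattice property is formulated for $g\in L^q$, and for a general $g$ one should instead appeal to the gluing lemma (\cite{BB}, Lemma 1.52, take $E=\{g<g_u\}$), which does not require $g\in L^q$. With that adjustment the argument gives the full claim; the rest of the proof is sound and matches the reference.
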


\subsection{Poincar\'{e} inequalities}\label{Poincare inequalities}
In general, the upper gradients of a function do not necessarily give us a control over it. In order to gain such control, one standard hypothesis when working in the metric setting is to assume that the space supports a Poincar\'{e} inequality.

\begin{definition}\label{PI}
	Let $p \in [1, \infty[$. 
	A metric measure space $X$ supports a weak $(1, p)$-Poincar\'{e} inequality if there exist $C_{PI}$ and a dilation factor $\lambda \geq 1 $ such that 
	\begin{equation*}
		\dashint_{B} |u-u_{B}|\, \dd\mu\leq C_{PI} r \left(\dashint_{\lambda B}g_u^p \, \dd\mu\right)^{\frac{1}{p}},
	\end{equation*}
	for all balls $B=B(y,r)\subset X$ and for all $u \in L^1_{loc}(X)$.
\end{definition}

	We note that a standard, yet non-trivial, assumption in the metric setting is that the space satisfies a weak $(1, s)$-Poincar\'{e} inequality for some $s<p$, where $p$ is the smallest natural exponent associated with the studied problem. However, as shown by Keith and Zhong \cite{KZ}, the Poincar\'{e} inequality is a self-improving property.
\begin{theorem}[\cite{KZ}, Theorem 1.0.1]\label{spoincare}
	Let $(X, d, \mu)$ be a complete metric measure space with $\mu$ Borel and doubling, supporting a weak $(1,p)$-Poincar\'{e} inequality for $p>1$, then there exists $\epsilon>0$ such that $X$ supports a weak $(1, s)$-Poincar\'{e} inequality for every $s>p-\epsilon$. Here, $\epsilon$ and the constants associated
with the $(1, s)$-Poincar\'e inequality depend only on $p$, the doubling
constant $C_d$ and $C_{PI}$.
\end{theorem}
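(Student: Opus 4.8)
Theorem~\ref{spoincare} is the self-improvement theorem of Keith and Zhong; in the body of the paper it is simply invoked, but let me describe the architecture one would follow to prove it. The plan is to reduce the statement to a pointwise inequality for Lipschitz functions and then improve the Lebesgue exponent of the gradient by a Calder\'on--Zygmund type iteration. First I would reduce to compactly supported Lipschitz $u$, with the pointwise Lipschitz constant $\mathrm{Lip}\,u$ playing the role of $g_u$, by density. Next, since a complete doubling measure space supporting a weak $(1,p)$-Poincar\'e inequality is quasiconvex, after a biLipschitz change of metric (which does not affect the data qualitatively) one may assume $X$ is geodesic; this permits telescoping along geodesics and the replacement of dilation factors by controlled constants. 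With these reductions, one recasts the hypothesis as the pointwise estimate
\[
	|u(x)-u(y)|\le C\,d(x,y)\Big(\big(M_{\kappa d(x,y)}(\mathrm{Lip}\,u)^p(x)\big)^{1/p}+\big(M_{\kappa d(x,y)}(\mathrm{Lip}\,u)^p(y)\big)^{1/p}\Big),
\]
valid for $\mu$-a.e.\ $x,y$ (a standard equivalence for $p>1$), where $M_Rh(z)=\sup_{0<\rho\le R}\dashint_{B(z,\rho)}h\,\dd\mu$ is the restricted maximal function; the goal becomes to show that the same estimate self-improves with $p$ replaced by some $s=p-\epsilon$.

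For the self-improvement I would fix a ball $B_0=B(x_0,r_0)$ and a threshold $t>0$ and split $2B_0$ into the good set $G_t=\{x\in 2B_0: M_{Cr_0}(\mathrm{Lip}\,u)^p(x)\le t^p\}$, on which the pointwise estimate shows $u$ is (roughly) $t$-Lipschitz, and the bad set $\Omega_t=2B_0\setminus G_t$. On a Whitney-type covering $\{B_i\}$ of $\Omega_t$ with bounded overlap, on which $\dashint_{B_i}(\mathrm{Lip}\,u)^p\gtrsim t^p$ while $\dashint_{5\lambda B_i}(\mathrm{Lip}\,u)^p\lesssim t^p$, one builds (via a McShane extension, or a Lipschitz partition of unity subordinate to $\{B_i\}$) a function $v_t$ that coincides with $u$ on $G_t$, has pointwise Lipschitz constant $\lesssim t$ everywhere on $\Omega_t$, and differs from $u$ on $\Omega_t$ only by a controlled truncated average of $\mathrm{Lip}\,u$. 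Applying the hypothesised $(1,p)$-Poincar\'e inequality to $u$ and comparing with $v_t$ on the Whitney balls yields a weak-type bound for $\mu(\{x\in B_0:|u(x)-u_{B_0}|>Cr_0t\})$ carrying a small quantitative gain over the naive one. Summing this bound over a geometric sequence of thresholds $t=2^kt_0$, using the doubling property and the layer-cake identity $\int h^s=s\int_0^\infty t^{s-1}\mu(\{h>t\})\,\dd t$, converts that gain into an improvement of the integrability exponent of $\mathrm{Lip}\,u$ from $p$ down to $s=p-\epsilon$; since every estimate in the chain depends only on $C_d$, $C_{PI}$ and $p$, so do $\epsilon$ and the resulting Poincar\'e constant, which gives the stated quantitative dependence.

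The hard part will be the construction and control of the truncated function $v_t$ on the bad set $\Omega_t$: one must produce a function whose pointwise Lipschitz constant is comparable to $t$ on all of $\Omega_t$ while keeping it so close to $u$ that the comparison inside the $(1,p)$-Poincar\'e inequality loses nothing, and this is exactly where the geodesic reduction and a careful accounting of the overlap constants of the Whitney decomposition are needed. Once the estimates for $v_t$ are in hand, the remaining summation over dyadic thresholds is a delicate but by now standard Calder\'on--Zygmund iteration.
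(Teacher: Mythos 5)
Be aware that the paper does not prove Theorem~\ref{spoincare}: it is imported directly from Keith and Zhong \cite{KZ}, and Section~\ref{Sec2} is explicitly a background section whose results are stated without proof. So there is no in-paper argument to compare yours against; the only meaningful benchmark is the original Keith--Zhong proof. Against that, your sketch is faithful in outline: the reduction to compactly supported Lipschitz functions, the passage to a geodesic metric via quasiconvexity (which indeed follows from completeness, doubling, and a weak Poincar\'e inequality), the recasting of the $(1,p)$-Poincar\'e inequality as a two-point estimate through restricted maximal functions of $(\mathrm{Lip}\,u)^p$, the good-set/bad-set split at a maximal-function threshold $t$ together with a Lipschitz truncation $v_t$ built on a Whitney cover of the bad set, and a dyadic summation over thresholds are all genuine ingredients, and you have correctly identified the construction and control of $v_t$ as the technical crux.

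The one caveat is your closing claim that, once $v_t$ is controlled, the remaining step is a ``by now standard Calder\'on--Zygmund iteration.'' In Keith--Zhong the quantitative gain that improves the exponent is not a weak-type bound on a single ball absorbed through the layer-cake formula; it has to be extracted uniformly across all the scales entering the telescoped pointwise inequality, and the multi-scale accounting needed to make that gain close on itself is precisely what occupies the bulk of their roughly twenty-page argument. Your sketch names the right objects in the right order but compresses the step where essentially all the difficulty lives into a single sentence. As an architectural outline it is accurate; it is not, and does not claim to be, a proof.
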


The following results show some further self-improving properties of the weak $(1,s)$-Poincar\'{e} inequality.
\begin{theorem}[\cite{BB}, Theorem 4.21] 
\label{sstars}
	Assume that $X$ supports a weak $(1,s)$-Poincar\'{e} inequality and that $Q$ in (\ref{s}) satisfies $Q>s$. Then $X$ supports a weak $(s^*,s)$-Poincar\'{e} inequality with $s^*=\frac{Qs}{Q-s}$. More precisely, there are constants $C$ and a dilation factor $\lambda'>1$ such that
	\begin{equation}\label{(2.8)}
		\left(\dashint_{B} |u-u_{B}|^{s^*} \,\dd\mu\right)^{\frac{1}{s^{*}}}\leq C r \left(\dashint_{\lambda'B}g_u^s \, \dd\mu\right)^{\frac{1}{s}},
	\end{equation}
	for all balls $B=B(y,r)\subset X$ and all integrable functions $u$ in $B(y, r)$. The constant $C$ depends on $C_{PI}$ and the dilation factor $\lambda'$ depends on $\lambda$ from Definition \ref{PI}.
\end{theorem}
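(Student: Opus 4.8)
The plan is to combine a chaining (telescoping) argument with the truncation method, which is the classical route to Sobolev--Poincar\'e inequalities on doubling spaces. \emph{Step 1: a pointwise bound.} Since $\mu$ is doubling, $\mu$-a.e.\ point is a Lebesgue point of $u$; I would fix such an $x$ in a fixed dilate of $B$ and telescope along the concentric balls $B_j=B(x,2^{-j}r)$, $j\ge0$. From $|u(x)-u_{B_0}|\le\sum_{j\ge0}|u_{B_{j+1}}-u_{B_j}|$ together with $|u_{B_{j+1}}-u_{B_j}|\le\frac{\mu(B_j)}{\mu(B_{j+1})}\dashint_{B_j}|u-u_{B_j}|\,\dd\mu$, the weak $(1,s)$-Poincar\'e inequality yields $|u(x)-u_{B_0}|\le Cr\sum_{j\ge0}2^{-j}\bigl(\dashint_{\lambda B_j}g_u^s\,\dd\mu\bigr)^{1/s}$; adding the analogous bound for $|u_{B_0}-u_B|$, using H\"older's inequality to pass the $j$-sum through the $1/s$ power, and regrouping over dyadic annuli, I would arrive at
\[
|u(x)-u_B|^s\le C\int_{\lambda'B}\frac{d(x,y)^s\,g_u(y)^s}{\mu\bigl(B(x,d(x,y))\bigr)}\,\dd\mu(y).
\]
The hypothesis $Q>s$ enters precisely here: it is what makes the geometric series generated by the lower mass bound \eqref{s} summable.

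\emph{Step 2: weak type, then truncation.} The right side above is a Riesz-type potential of order $s$ applied to $g_u^s\in L^1(\mu)$, and the standard fractional-integration estimate on doubling spaces shows that such a potential maps $L^1$ into weak-$L^{Q/(Q-s)}$ on balls; this produces a weak-type $(s^*,s)$-Poincar\'e inequality $\mu(\{x\in B:|u(x)-u_B|>t\})\le C(r/t)^{s^*}\mu(B)\bigl(\dashint_{\lambda'B}g_u^s\,\dd\mu\bigr)^{s^*/s}$. This is not yet \eqref{(2.8)} --- at the $L^1$ endpoint a Riesz potential improves integrability only in the weak sense --- so I would close the gap by the Maz'ya--Long--Nie truncation trick: after subtracting a median of $u$ on $B$ and treating the positive and negative parts separately, one may assume $u\ge0$ with $\mu(\{u=0\}\cap B)\ge\frac{1}{2}\mu(B)$, and then the truncations $v_k=\min\{(u-2^k)^+,2^k\}$, $k\in\mathbb Z$, vanish on half of $B$, are bounded by $2^k$, and have minimal $s$-weak upper gradient $g_u\chi_{E_k}$ with $E_k=\{2^k<u<2^{k+1}\}$. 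Feeding each $v_k$ into the weak-type inequality, bounding $(v_k)_B$ by $2^k\mu(\{v_k>0\}\cap B)/\mu(B)$, summing over $k$ while using that every point lies in at most two of the sets $E_k$ (hence $\sum_k\int_{\lambda'B}g_u^s\chi_{E_k}\,\dd\mu\le2\int_{\lambda'B}g_u^s\,\dd\mu$ and, since $s^*/s>1$, the $\ell^{s^*/s}$ norm of the pieces is dominated by their $\ell^1$ norm), and reassembling through the distribution function of $u$ on $B$, one obtains \eqref{(2.8)}; the passage from a fixed dilate of $B$ to $B$ itself is handled by the usual Whitney-type covering argument.

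\emph{Main obstacle.} Step 1 and the fractional-integration half of Step 2 are routine bookkeeping. The genuine difficulty is the weak-to-strong upgrade, since it is exactly there that one must exploit structure beyond the naive potential heuristic --- namely the truncation property of minimal upper gradients --- and the delicate points are the additive normalisation by a median, controlling the means $(v_k)_B$ in terms of the levels $2^k$, and keeping all dilation factors and the constant uniform while summing the geometric series in $k$.
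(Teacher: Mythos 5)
The paper does not prove this statement; it is quoted verbatim as Theorem~4.21 of Bj\"orn--Bj\"orn~\cite{BB}, so the comparison here must be against the proof in that reference rather than one in the manuscript. Your plan is the standard one and it is essentially the route taken in~\cite{BB} (and in Haj\l asz--Koskela's \emph{Sobolev met Poincar\'e}): a telescoping/chaining estimate on concentric dyadic balls gives a pointwise Riesz-potential bound for $|u-u_B|$, fractional integration on a doubling space turns this into a weak $(s^*,s)$-Poincar\'e inequality, and the Maz'ya truncation (applied to $v_k=\min\{(u-c)_\pm-2^k)^+,2^k\}$ after subtracting a median $c$, using that minimal upper gradients truncate as $g_{v_k}\le g_u\chi_{\{2^k<(u-c)_\pm<2^{k+1}\}}$) upgrades weak to strong because $s^*/s>1$ allows the $\ell^1$-to-$\ell^{s^*/s}$ domination when summing the level sets. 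Two cosmetic points: the sets $E_k=\{2^k<u<2^{k+1}\}$ are pairwise disjoint, so ``lies in at most two'' should read ``at most one'' (which is only better); and $g_u\chi_{E_k}$ is an upper bound for, not necessarily equal to, the minimal $s$-weak upper gradient of $v_k$, but the upper bound is all the argument needs. Your emphasis on $Q>s$ as the source of summability in the chain, on the median normalisation, and on the covering to shrink $\lambda'B$ back to $B$ correctly identifies where the real work is. So: correct, and essentially the same approach as the cited source.
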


\begin{corollary}[\cite{BB}, Corollary 4.26]
\label{coropoinca}
If $X$ supports a weak $(1,s)$-Poincar\'{e} inequality and $Q$ in (\ref{s}) satisfies $Q\leq s$, then $X$ supports a weak $(t,s)$-Poincar\'{e} inequality for all $1\leq t<\infty$.
\end{corollary}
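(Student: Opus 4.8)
The plan is to deduce the statement from Theorem \ref{sstars} after first observing that, when $Q\le s$, the dimensional lower bound \eqref{s} is automatically available with \emph{any} exponent larger than $Q$. Indeed, for $0<\rho\le R$ and any $\tilde Q\ge Q$ one has $(\rho/R)^{\tilde Q}\le (\rho/R)^{Q}$, so \eqref{s} holds verbatim with $Q$ replaced by $\tilde Q$ and the same constant $C$. Since by hypothesis $Q\le s$, this lets us treat an arbitrary $\tilde Q\in (s,\infty)$ as ``the $Q$ in \eqref{s}'', which is exactly the regime in which Theorem \ref{sstars} applies.

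So, fix $t\in[1,\infty)$. The Sobolev exponent produced by Theorem \ref{sstars} for the parameter $\tilde Q$ is
\[
	s^{*}=\frac{\tilde Q\, s}{\tilde Q-s}=s+\frac{s^{2}}{\tilde Q-s},
\]
which tends to $+\infty$ as $\tilde Q\downarrow s$; hence one may choose $\tilde Q>s$ (for instance with $\tilde Q-s\le s^{2}/t$) so that $s^{*}\ge\max\{t,s\}$. With this choice, Theorem \ref{sstars} yields constants $C$ and $\lambda'\ge 1$ such that
\begin{equation*}
	\left(\dashint_{B}|u-u_{B}|^{s^{*}}\,\dd\mu\right)^{1/s^{*}}\le C r\left(\dashint_{\lambda'B}g_{u}^{s}\,\dd\mu\right)^{1/s}
\end{equation*}
for all balls $B=B(y,r)$ and all integrable $u$. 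Since $\dd\mu/\mu(B)$ is a probability measure on $B$ and $t\le s^{*}$, Jensen's inequality gives
\begin{equation*}
	\left(\dashint_{B}|u-u_{B}|^{t}\,\dd\mu\right)^{1/t}\le\left(\dashint_{B}|u-u_{B}|^{s^{*}}\,\dd\mu\right)^{1/s^{*}},
\end{equation*}
and chaining the two displays gives precisely the weak $(t,s)$-Poincar\'e inequality, with dilation factor $\lambda'$ and a constant depending only on the data and on $t$ (through the admissible choice of $\tilde Q$).

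The one point requiring genuine care is the first step: one must be sure that \eqref{s}, which Lemma \ref{lemm3.3} states with a fixed exponent $Q$ determined by the doubling constant, may legitimately be invoked with the enlarged exponent $\tilde Q$, so that the hypothesis $\tilde Q>s$ of Theorem \ref{sstars} is met. Once this is granted, the rest is bookkeeping with the explicit formula for $s^{*}$ together with a single application of Jensen's inequality, and no further structure of $X$ is needed.
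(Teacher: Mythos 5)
Your proof is correct and coincides with the argument the paper itself outlines in the surrounding text: Remark \ref{rem2poin} justifies enlarging $Q$ to any $\tilde Q>s$ so that Theorem \ref{sstars} applies, and Remark \ref{rem1poin} supplies the power-mean step that brings the exponent down from $s^*=\tilde Q s/(\tilde Q-s)$ (which tends to $\infty$ as $\tilde Q\downarrow s$) to any finite $t$. The ``one point requiring genuine care'' you isolate is exactly the content of Remark \ref{rem2poin}, so your caution is well placed but already dispatched by the paper; the only implicit cost, which you correctly absorb, is that the constant and dilation factor in the resulting $(t,s)$-Poincar\'e inequality depend on the chosen $\tilde Q$ and hence on $t$.
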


\begin{remark}\label{rem1poin} By the H\"older inequality we see that a weak $(s^*,s)$-Poincar\'{e} inequality implies the same inequality for smaller values of $s^*$. Meaning that $X$ will then support a weak $(t,s)$-Poincar\'{e} inequality for all $1<t<s^*$.
\end{remark}

\begin{remark}\label{rem2poin} The exponent $Q$ in (\ref{s}) is not uniquely determined, in particular, since $\rho< R$, we can always make $Q$ larger. Thus, the assumption $Q>s$ in Theorem \ref{sstars} can always be fulfilled. 
\end{remark}

\subsection{Newtonian spaces}\label{extrasection}
When working in the general metric setting we need to be careful about defining the suitable working space. That is why, before introducing the  $(p,q)$-Dirichlet boundary value problem,  we devote this section to collect some detailed properties and results concerning these function spaces. We define $\widetilde{N}^{1,q}(X)$ to be the space of all $q$-integrable functions $u$ on $X$ that have a $q$-integrable $q$-weak upper gradient $g$ on $X$. We equip this space with the seminorm $\Vert u\Vert_{\widetilde{N}^{1,q}(X)}=\Vert u\Vert_{L^q(X)}+\inf\Vert g\Vert_{L^q(X)}$, where the infimum is taken over all $q$-weak upper gradients of $u$. We define the equivalence relation in $\widetilde{N}^{1,q}(X)$ by saying that $u\sim v$ if $\Vert u-v\Vert_{\widetilde{N}^{1,q}(X)}=0$. The Newtonian space $N^{1,q}(X)$ is then defined by $\widetilde{N}^{1,q}(X)/\sim$, with the norm $\Vert u\Vert_{N^{1,q}(X)}=\Vert u\Vert_{\widetilde{N}^{1,q}(X)}$.

\begin{remark}[\cite{BB}, Corollary A.9]\label{gradients}
	Let  $1<p<q$, $u \in N^{1,q}(X)$. If $(X, d, \mu)$  is a complete doubling $(1,p)$-Poincar\'{e} space, then the minimal $p$-weak upper gradient and the minimal $q$-weak upper gradient of $u$ coincide $\mu$-a.e.
\end{remark}
In virtue of Remark \ref{gradients}, we consider $q$-weak upper gradients rather than $p$-weak upper gradients.

\subsection{Capacities}\label{Capacities}
\begin{definition}[\cite{BB}, Definition 1.24] Let $E\subset X$ be a Borel set. We define the $p$-capacity of $E$ as
	\begin{equation*}
		{\rm C}_{p}(E)=\inf_{u}\left(\int_{X} |u|^{p} \, \dd\mu + \inf_{u}\int_{X} g_{u}^{p} \, \dd\mu\right),
	\end{equation*}
	where the infimum is taken over all $u\in N^{1,p}(X)$.
\end{definition}	
\noindent We say that a property holds $p$-quasieverywhere ($p$-q.e.) if the set of points
for which it does not hold has $p$-capacity zero.

\begin{definition}[\cite{BB}, Definition 6.13] Let $B\subset X$ be a ball and $E\subset B$. We define the variational capacity
	\begin{equation*}
		{\rm cap}_{p}(E;2B)=\inf_{u}\int_{2B}g_{u}^{p} \, \dd\mu,
	\end{equation*}
	where the infimum is taken over all $u\in N^{1,p}(2B)$ such that $u\geq 1$ on $E$ and $u = 0$ on $X \setminus 2B$ $p$-q.e..
\end{definition}
The following lemma compares the capacities ${\rm cap}_p$ and $C_p$ and shows that they are in many cases equivalent.

\begin{lemma}[\cite{BMS}, Lemma 2.6] Let $B$ be a ball in $X$ with radius $r$ and $E\subset B$ be a Borel set. Then for each $\lambda>1$ with $\lambda r<\frac{1}{3}{\rm diam} X$, there exists $C_{\lambda}>0$ such that
$$
\frac{\mu(E)}{C_{\lambda}r^{p}}\leq{\rm cap}_p(E;\lambda B)\leq\frac{C_\lambda\mu(B)}{r^p}
$$
and
$$
\frac{C_p(E)}{C_{\lambda}(1+r^{p})}\leq{\rm cap}_p(E;\lambda B)\leq C_\lambda\left(1+\frac{1}{r^p}\right)C_p(E).
$$
In particular, $C_p(E)=0$ if and only if ${\rm cap}_p(E\cap B;\lambda B)=0$ for all balls $B\subset X$ and some $\lambda>1$; and ${\rm cap}_p(B;\lambda B)$ is comparable to $r^{-p}\mu(B)$, where the comparison constant depends only on the data of $X$ and on $\lambda$.
\end{lemma}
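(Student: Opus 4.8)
The plan is to obtain each of the four one-sided inequalities by exhibiting or modifying an admissible test function, using only the doubling property \eqref{doubling}, a Poincar\'e inequality, the Leibniz rule for upper gradients, and the reverse-doubling volume estimate $\mu(2\lambda B\setminus\lambda B)\geq c\,\mu(2\lambda B)$ with $c=c(C_d)>0$. The hypothesis $\lambda r<\tfrac13{\rm diam}\,X$ enters precisely to make this last estimate available: it guarantees that the dilated balls occurring below are proper subsets of $X$, and then the connectedness of the complete $(1,p)$-Poincar\'e space $X$ yields the annular lower bound. Throughout I would assume $0\leq u\leq1$ for admissible competitors, since the truncation $\min\{\max\{u,0\},1\}$ increases neither $\int|u|^p$ nor $\int g_u^p$ and preserves admissibility.

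For the two upper bounds on ${\rm cap}_p(E;\lambda B)$ I would fix the Lipschitz cutoff $\eta(x)=\max\{0,1-((\lambda-1)r)^{-1}d(x,B)\}$, so that $\eta\equiv1$ on $B\supseteq E$, $\eta\equiv0$ off $\lambda B$, and $g_\eta\leq((\lambda-1)r)^{-1}$. Since $\eta$ is admissible for ${\rm cap}_p(E;\lambda B)$, one gets ${\rm cap}_p(E;\lambda B)\leq\int_{\lambda B}g_\eta^p\,\dd\mu\leq((\lambda-1)r)^{-p}\mu(\lambda B)\leq C_\lambda r^{-p}\mu(B)$ by \eqref{doubling}, which is the right inequality of the first chain. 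For ${\rm cap}_p(E;\lambda B)\leq C_\lambda(1+r^{-p})C_p(E)$ I would take $u$ admissible for $C_p(E)$ and test ${\rm cap}_p$ with $v=\eta u$: then $v\geq1$ on $E$, $v\equiv0$ off $\lambda B$, and $g_v\leq g_u+((\lambda-1)r)^{-1}u$ by the Leibniz rule, so $\int_{\lambda B}g_v^p\,\dd\mu\leq C\int_X g_u^p\,\dd\mu+C_\lambda r^{-p}\int_X|u|^p\,\dd\mu\leq C_\lambda(1+r^{-p})\int_X(|u|^p+g_u^p)\,\dd\mu$, and infimizing over $u$ finishes it.

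The remaining two inequalities both rest on a Friedrichs-type estimate that I would record first: if $u\in N^{1,p}(\lambda B)$ vanishes $p$-q.e., hence $\mu$-a.e., off $\lambda B$, then extending it by zero to $B'=2\lambda B$ and using that $u\equiv0$ on the annulus $B'\setminus\lambda B$, a set of measure $\geq c\,\mu(B')$, one first gets $|u_{B'}|^p\leq C\dashint_{B'}|u-u_{B'}|^p\,\dd\mu$; combining this with a $(p,p)$-Poincar\'e inequality on $B'$ (available from Theorem \ref{sstars} together with H\"older, for a suitable dilation $\lambda'$) and with $g_u=0$ $\mu$-a.e.\ off $\lambda B$ yields
\[
\int_{\lambda B}|u|^p\,\dd\mu\;\leq\;\int_{B'}|u|^p\,\dd\mu\;\leq\;C\,r_{B'}^p\int_{\lambda'B'}g_u^p\,\dd\mu\;=\;C\,(2\lambda r)^p\int_{\lambda B}g_u^p\,\dd\mu\;=\;C_\lambda\,r^p\int_{\lambda B}g_u^p\,\dd\mu .
\]
If $u$ is admissible for ${\rm cap}_p(E;\lambda B)$ then $u\geq1$ on $E$, so $\mu(E)\leq\int_{\lambda B}|u|^p\,\dd\mu\leq C_\lambda r^p\int_{\lambda B}g_u^p\,\dd\mu$, and the infimum over $u$ gives the left inequality of the first chain; the same $u$, extended by zero, is admissible for $C_p(E)$, and the estimate above gives $C_p(E)\leq\int_X(|u|^p+g_u^p)\,\dd\mu\leq C_\lambda(1+r^p)\int_{\lambda B}g_u^p\,\dd\mu$, which after infimizing reads ${\rm cap}_p(E;\lambda B)\geq C_p(E)/(C_\lambda(1+r^p))$. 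Finally, the two ``in particular'' statements follow at once: taking $E=B$ in the first chain gives ${\rm cap}_p(B;\lambda B)\simeq r^{-p}\mu(B)$, and for the capacity-zero equivalence one combines monotonicity of the test classes, the two comparison inequalities, and countable subadditivity of $C_p$ applied to the decomposition $E=\bigcup_n\bigl(E\cap B(x_0,n)\bigr)$.

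The only genuinely non-formal point is the lower bound $\mu(E)\leq C_\lambda r^p\,{\rm cap}_p(E;\lambda B)$, i.e.\ the Friedrichs inequality for functions with zero boundary values in $\lambda B$: it needs simultaneously the self-improved $(p,p)$-Poincar\'e inequality (Theorems \ref{spoincare}--\ref{sstars}) and the reverse-doubling estimate $\mu(2\lambda B\setminus\lambda B)\gtrsim\mu(2\lambda B)$, and it is exactly there that the assumption $\lambda r<\tfrac13{\rm diam}\,X$ together with the completeness and connectedness of $X$ are used. Everything else is bookkeeping with cutoffs, truncation, and the Leibniz rule for upper gradients.
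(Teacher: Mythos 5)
The paper does not prove this lemma: it is imported verbatim from Bj\"orn--MacManus--Shanmugalingam \cite{BMS}, Lemma 2.6, so there is no internal proof to compare against. Your blind reconstruction is the standard argument for comparing the Sobolev capacity $C_p$ with the variational condenser capacity $\mathrm{cap}_p$: Lipschitz cutoffs give both upper bounds, and a Friedrichs/Maz'ya inequality for functions in $N^{1,p}_0(\lambda B)$ gives both lower bounds. The truncation to $0\leq u\leq 1$, the Leibniz-rule estimate $g_{\eta u}\leq \eta g_u+|u|g_\eta$, the locality of the minimal upper gradient (giving $g_u=0$ a.e.\ off $\lambda B$), and the identification of the Friedrichs step as the single nontrivial ingredient are all correct and match how the literature proves this.

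Two quantitative details should be repaired. First, the choice $B'=2\lambda B$ for the Friedrichs step is too large under the stated hypothesis: from $\lambda r<\tfrac13\mathrm{diam}\,X$ one only gets $\sup_{z\in X}d(x,z)\geq\tfrac12\mathrm{diam}\,X>\tfrac32\lambda r$ for any centre $x$, so $\tfrac32\lambda B\subsetneq X$ is guaranteed, but $2\lambda r$ can exceed $\tfrac12\mathrm{diam}\,X$, in which case $2\lambda B$ may equal $X$ and the annulus $B'\setminus\lambda B$ vanishes. Taking $B'=\kappa\lambda B$ with $1<\kappa\leq\tfrac32$ makes the reverse-doubling annular estimate available. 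Second, the decomposition $E=\bigcup_n\bigl(E\cap B(x_0,n)\bigr)$ in the ``in particular'' step violates the radius constraint $\lambda r_n<\tfrac13\mathrm{diam}\,X$ as soon as $\mathrm{diam}\,X<\infty$; instead cover the separable space $X$ by countably many balls of a fixed admissibly small radius and apply countable subadditivity of $C_p$ to that cover. Both are local fixes and do not change the architecture of the proof; with them, the argument is sound and is essentially the proof given in \cite{BMS}.
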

\begin{definition} We say that the set $E\subset X$ is uniformly $p$-fat  if there exist constant $C_f>0$ and $r_0>0$ such that for all $x\in E$ and $0<r<r_0$, we have
$$
{\rm cap}_p(E\cap B(x,r);B(x,2r))\geq C_f\ {\rm cap}_p(B(x,r); B(x,2r)).
$$
\end{definition}

We point out that, as it happens for Poincar\'e inequalities, we have a self-improving property for $p$-fatness condition. For a complete proof of the next result, we refer the reader to \cite{BMS}.
\begin{proposition}[\cite{BMS}, Theorem 1.2] \label{q0fat} Let $X$ be a proper, LLC, doubling metric measure space supporting a $(1,s)$-Poincar\'e inequality for some $s$ with $1\leq s<\infty$. Let $p>s$ and suppose that $E$ is uniformly $p$-fat. Then there exists $p_0<p$ so that $E$ is  uniformly $p_0$-fat.
\end{proposition}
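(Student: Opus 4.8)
The plan is to adapt the classical self-improvement argument for uniformly fat sets (Lewis, in the Euclidean case) to the present setting: uniform $p_0$-fatness of $E$ amounts to a lower bound ${\rm cap}_{p_0}(E\cap B;2B)\gtrsim{\rm cap}_{p_0}(B;2B)$, and I would obtain it by testing against the $p_0$-capacitary potential of the fat set and exploiting that this potential satisfies a Gehring-type higher integrability which, for a small enough drop of the exponent, compensates exactly for replacing $p$ by $p_0$.

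Fix $x\in E$ and $0<r<r_0$, set $B=B(x,r)$ and $F=\bar B\cap E$. By the lemma quoted above ${\rm cap}_{t}(B;2B)\asymp r^{-t}\mu(B)$, and a standard comparison argument for concentric balls (a cutoff plus the Poincar\'e inequality) gives ${\rm cap}_{t}(F;\lambda B)\asymp{\rm cap}_{t}(F;2B)$ for $\lambda\in[2,3]$ and $t\in\{p_0,p\}$; so the goal is to produce $1<p_0<p$ with ${\rm cap}_{p_0}(F;3B)\gtrsim r^{-p_0}\mu(B)$, the implied constant depending only on $C_f$ and the data. Let $v\in N^{1,p_0}(3B)$ be the $p_0$-capacitary potential of $F$ in $3B$ (it exists by the direct method): $0\le v\le 1$, $v=1$ on $F$ (in the $p_0$-q.e.\ sense), $v=0$ off $3B$, and $\int_{3B}g_v^{p_0}\,\dd\mu={\rm cap}_{p_0}(F;3B)=:\kappa$. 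To be able to compare $v$ with competitors on smaller balls using only interior estimates, I would replace $v$ by $\tilde v:=\eta v$, where $\eta$ is a Lipschitz cutoff equal to $1$ on $2B$ and supported in $\tfrac52 B$; a Friedrichs-type inequality (using that $v$ vanishes off $3B$) then gives $\int g_{\tilde v}^{p_0}\,\dd\mu\le C\kappa$, and $\tilde v$ remains admissible for ${\rm cap}_{p_0}(F;\tfrac52 B)$.

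The crucial point is the regularity of $v$: on $\tfrac52 B$ it solves a single-obstacle problem for the $p_0$-Dirichlet energy, with the constant obstacle on $F$, and such solutions satisfy the Caccioppoli inequality; combining this with the weak $(1,s)$-Poincar\'e inequality — which is where the hypothesis $p>s$ enters, since it forces $s<p_0$ once $p_0=p-\epsilon$ with $\epsilon<p-s$ — produces a reverse H\"older inequality for $g_v$, and Gehring's lemma upgrades it to
\[
g_v\in L^{p_0(1+\delta_0)}(\tfrac52 B),\qquad \left(\dashint_{\tfrac52 B}g_v^{p_0(1+\delta_0)}\,\dd\mu\right)^{\!1/(p_0(1+\delta_0))}\le C\left(\dashint_{3B}g_v^{p_0}\,\dd\mu\right)^{\!1/p_0},
\]
with $\delta_0=\delta_0(s,p,C_d,C_{PI})>0$; the cutoff only adds a bounded term, so the same holds for $g_{\tilde v}$. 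Now fix $\epsilon=p-p_0$ so small that, besides $\epsilon<p-s$, one also has $p<p_0(1+\delta_0)$; then $g_{\tilde v}\in L^p(\tfrac52 B)$, so $\tilde v$ is admissible for ${\rm cap}_p(F;\tfrac52 B)$ too, and the uniform $p$-fatness of $E$ yields $\int_{\tfrac52 B}g_{\tilde v}^p\,\dd\mu\ge{\rm cap}_p(F;\tfrac52 B)\gtrsim{\rm cap}_p(B;2B)\asymp r^{-p}\mu(B)$.

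Finally, interpolating $L^p$ between $L^{p_0}$ and $L^{p_0(1+\delta_0)}$ and inserting the higher-integrability bound together with $\int_{\tfrac52 B}g_{\tilde v}^{p_0}\,\dd\mu\le C\kappa$, one bounds $r^{-p}\mu(B)\lesssim\int_{\tfrac52 B}g_{\tilde v}^p\,\dd\mu$ from above in terms of $\kappa$, $r$ and $\mu(B)$; a short case distinction (according to which term of the higher-integrability bound dominates the cutoff contribution) then gives $\kappa\gtrsim r^{-p_0}\mu(B)$, the powers of $r$ and $\mu(B)$ matching precisely because $p_0+\epsilon=p$. This is uniform $p_0$-fatness of $E$, with the same $r_0$ and with constant depending only on $C_f$ and the data. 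The step I expect to be the main obstacle is the regularity/higher-integrability one together with this exponent bookkeeping: one must know that $\delta_0$ can be taken uniform in $x$ and $r$ (which is where properness, the LLC condition and the Poincar\'e inequality are used, through the Caccioppoli estimate and the De Giorgi-type regularity of the obstacle problem), and $\epsilon$ must be chosen — depending only on $\delta_0$, $p$ and $s$ — small enough that the gain from higher integrability strictly dominates the loss from lowering the exponent. The quasicontinuity bookkeeping needed to pass between the $C_{p_0}$- and $C_p$-exceptional sets, and the full details, are carried out in \cite{BMS}.
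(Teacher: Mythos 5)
The paper does not prove this proposition: it is quoted verbatim from Bj\"orn--MacManus--Shanmugalingam, and the authors explicitly refer the reader to \cite{BMS} for the proof, so there is no ``paper's own proof'' to compare against. Your sketch correctly reproduces the Lewis-type self-improvement argument that \cite{BMS} adapts to metric spaces: work with the $p_0$-capacitary potential, derive interior higher integrability of its upper gradient via Caccioppoli/De Giorgi estimates and Gehring's lemma, then use interpolation together with the known $p$-fatness lower bound on the $p$-energy; the exponent bookkeeping closes exactly because $p=\alpha p_0+(1-\alpha)p_0(1+\delta_0)$ forces $\kappa^{p/p_0}\gtrsim r^{-p}\mu(B)^{p/p_0}$. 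Two points worth making explicit: (i) controlling the cutoff contribution at the \emph{higher} exponent $p'=p_0(1+\delta_0)$, not only at $p_0$, requires the Sobolev inequality $\bigl(\dashint |v|^{p'}\bigr)^{1/p'}\lesssim r\bigl(\dashint g_v^{p_0}\bigr)^{1/p_0}$ and hence the additional constraint $p'<s^*$, which is compatible with $p'>p$ only because $p<s^*$ and $\delta_0$ can be shrunk --- with the crude bound $|v|\le 1$ alone the cutoff term is $\asymp r^{-p_0}\mu(B)$ and the announced case distinction does \emph{not} rescue the argument, so the Friedrichs--Maz'ya step is essential rather than cosmetic; and (ii) once the Sobolev-type bounds on the cutoff are in place, the interpolation yields the conclusion directly and the case distinction you anticipate is not needed.
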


The following proposition is a capacity version of the Sobolev-Poincar\'e inequality in Remark \ref{Lemma 2.4EMThesis}, also referred as Maz'ya type estimate. The proof is a straightforward generalization of the Euclidean case and it can be found in \cite{B}.

\begin{proposition}[\cite{B}, Proposition 3.2]\label{prop2.3}Let $X$ be a doubling metric measure space supporting a weak $(1,s)$-Poincar\'e inequality. Then there exists $C$ and $\lambda\geq 1$ such that for all balls $B$ in $X$, $u\in N^{1,s}(X)$ and $S=\lbrace x\in\frac{1}{2}B: u(x)=0\rbrace$, then
\begin{equation}
\left(\dashint_B\vert u\vert^t \, \dd\mu\right)^{\frac{1}{t}}\leq\left(\frac{C}{{\rm cap}_s(S,B)}\int_{\lambda B}g^s \, \dd\mu\right)^{\frac{1}{s}},
\end{equation}
for $C$ depending on $C_{PI}$ and $t$ as in Corollary \ref{coropoinca}.
\end{proposition}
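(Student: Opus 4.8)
The plan is to run the classical Maz'ya argument: split $\big(\dashint_B|u|^t\big)^{1/t}\le\big(\dashint_B|u-u_B|^t\big)^{1/t}+|u_B|$, control the oscillation term by a Sobolev--Poincar\'e inequality, and control the mean value $|u_B|$ by a capacitary estimate that exploits the fact that $u$ vanishes on $S$. Write $r$ for the radius of $B$ and take $g=g_u$, the minimal $s$-weak upper gradient of $u$; it suffices to prove the estimate for this $g$, since $g_u\le g$ $\mu$-a.e.\ for every $s$-weak upper gradient (Theorem \ref{mpwug}). First I would dispose of the trivial cases: $S=\emptyset$ and ${\rm cap}_s(S,B)=0$ (the right-hand side is then $+\infty$), and $\int_{\lambda B}g^s\,\dd\mu=0$ (then $u$ is constant, hence $\equiv 0$ as it vanishes on $S$). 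Next I would record the available Poincar\'e inequalities: by Theorem \ref{sstars}, Corollary \ref{coropoinca} and Remark \ref{rem1poin}, $X$ supports both a weak $(t,s)$-Poincar\'e inequality for the exponent $t$ in the statement and a weak $(s,s)$-Poincar\'e inequality, so there are a dilation factor $\lambda\ge 1$ and a constant $C$ with
\begin{equation*}
\Big(\dashint_B|u-u_B|^t\,\dd\mu\Big)^{1/t}\le C\,r\Big(\dashint_{\lambda B}g^s\,\dd\mu\Big)^{1/s},\qquad \int_B|u-u_B|^s\,\dd\mu\le C\,r^s\int_{\lambda B}g^s\,\dd\mu,
\end{equation*}
the second inequality also using the doubling property to absorb the ratio $\mu(\lambda B)/\mu(B)$.

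The heart of the matter is then the capacitary estimate $|u_B|^s\,{\rm cap}_s(S,B)\le C\int_{\lambda B}g^s\,\dd\mu$ (we may assume $u_B\ne 0$, otherwise that term is absent). To obtain it I would fix a Lipschitz cutoff $\eta$ with $\eta\equiv 1$ on $\frac12 B$, $\eta\equiv 0$ on $X\setminus B$ and $g_\eta\le C/r$, and test the capacity with
\begin{equation*}
v=\eta\,w,\qquad w=\Big(1-\frac{|u|}{|u_B|}\Big)_+ .
\end{equation*}
Since $0\le\eta,w\le 1$ and $u=0$ on $S\subset\frac12 B$, the function $v$ belongs to $N^{1,s}(B)$, equals $1$ on $S$ and vanishes on $X\setminus B$, hence is admissible for ${\rm cap}_s(S,B)$. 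Using the Leibniz rule $g_v\le\eta g_w+w g_\eta$, the bound $g_w\le|u_B|^{-1}g$, and the pointwise inequality $w\le|u_B|^{-1}|u-u_B|$,
\begin{equation*}
{\rm cap}_s(S,B)\le\int_B g_v^s\,\dd\mu\le\frac{C}{|u_B|^s}\int_{\lambda B}g^s\,\dd\mu+\frac{C}{r^s|u_B|^s}\int_B|u-u_B|^s\,\dd\mu\le\frac{C}{|u_B|^s}\int_{\lambda B}g^s\,\dd\mu,
\end{equation*}
where the last step is the weak $(s,s)$-Poincar\'e inequality recorded above.

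Collecting the bounds, I arrive at
\begin{equation*}
\Big(\dashint_B|u|^t\,\dd\mu\Big)^{1/t}\le C\,r\Big(\dashint_{\lambda B}g^s\,\dd\mu\Big)^{1/s}+\Big(\frac{C}{{\rm cap}_s(S,B)}\int_{\lambda B}g^s\,\dd\mu\Big)^{1/s},
\end{equation*}
and it only remains to absorb the first summand into the second, i.e.\ to check ${\rm cap}_s(S,B)\le C\mu(\lambda B)/r^s$. This follows because $S\subset\frac12 B$: by monotonicity of the variational capacity, the comparison ${\rm cap}_s(\frac12 B,B)\approx (r/2)^{-s}\mu(\frac12 B)$ from \cite{BMS}, and the doubling property, ${\rm cap}_s(S,B)\le{\rm cap}_s(\frac12 B,B)\le C\mu(\lambda B)/r^s$. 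The resulting $C$ depends only on $C_{PI}$, the doubling constant and $t$. The step I expect to require genuine care is the gradient estimate for the test function $v$: the contribution of $g_\eta$ must \emph{not} be bounded crudely by $g_\eta\le C/r$ over all of $B$, since that would merely recover the useless quantity ${\rm cap}_s(B,\lambda B)\approx r^{-s}\mu(B)$ and destroy the gain; what rescues the argument is the factor $w\le|u_B|^{-1}|u-u_B|$ multiplying $g_\eta$, which the weak $(s,s)$-Poincar\'e inequality converts back into $\int_{\lambda B}g^s$ with precisely the power of $r$ that cancels the $r^{-s}$ coming from the cutoff. The rest --- keeping track of the dilation factors and of the ratios $\mu(\lambda B)/\mu(B)$ --- is routine.
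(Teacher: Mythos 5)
The paper does not supply a proof of this proposition---it cites [B, Proposition~3.2] with the remark that the argument is a straightforward generalization of the Euclidean case---so there is nothing internal to compare against. Your reconstruction is the standard Maz'ya capacitary argument and, as far as I can tell, it is correct: the decomposition $\big(\dashint_B|u|^t\big)^{1/t}\le\big(\dashint_B|u-u_B|^t\big)^{1/t}+|u_B|$, the $(t,s)$-Poincar\'e control of the oscillation, the test function $v=\eta\big(1-|u|/|u_B|\big)_+$ with the truncation bound $\big(1-|u|/|u_B|\big)_+\le|u-u_B|/|u_B|$, and the final absorption of $r\big(\dashint_{\lambda B}g^s\big)^{1/s}$ via ${\rm cap}_s(S,B)\le{\rm cap}_s(\tfrac12 B,B)\le C\mu(\lambda B)/r^s$ all check out, and the insight you flag---that the cutoff term must be weighted by $w$ rather than estimated crudely by $g_\eta\le C/r$---is indeed where the gain comes from. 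Two tiny points: in the trivial case $\int_{\lambda B}g^s\,\dd\mu=0$, the conclusion $u\equiv0$ on $B$ requires $S\ne\emptyset$ (if $S=\emptyset$ you are already in the $\mathrm{cap}=0$ case); and absorbing the ratio $\mu(B)/\mu(\lambda B)\le1$ in the $(s,s)$-Poincar\'e step needs no doubling, since $B\subset\lambda B$. Neither affects the argument.
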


\subsection{Newtonian spaces with zero boundary values}\label{zero boundary values} Let $\Omega$ be an open and bounded subset of $X$. We define $N^{1,q}_0(\Omega)$ to be the set of functions $u\in N^{1,q}(X)$ that are zero on $X\setminus\Omega$ $q$-q.e. The space $N_0^{1,q}(\Omega)$ is equipped with the norm $\Vert\cdot\Vert_{N^{1,q}}$. Note also that if $C_q(X\setminus\Omega) = 0$, then $N^{1,q}_0(\Omega)=N^{1,q}(X)$. We shall therefore always assume that $C_q(X \setminus \Omega) > 0$.

The following remark is an important consequence of the $(1,s)$-Poincar\'e inequality, it gives a useful Sobolev inequality for functions vanishing outside a ball $B$, see \cite{KS}.
\begin{remark}\label{Lemma 2.4EMThesis} There exist $C$ and $t>1$ such that for all balls $B$ in $X$ with radius $r<\frac{{\rm diam} X}{3}$ and all $u\in N_{0}^{1,s}(B)$ we have
\begin{equation*}
		\left(\dashint_{B} |u|^t  \, \dd\mu\right)^{\frac{1}{t}}\leq C r \left(\dashint_{B}g_u^p \, \dd\mu\right)^{\frac{1}{p}},
	\end{equation*}
	where $C$ depends on $C_{PI}$ and $t$ is as in Remark \ref{rem1poin}. \end{remark}
\noindent In order to avoid clumsy notation, we assume that ${\rm diam} \,X=\infty$, i.e., that the above Sobolev inequality holds for all balls. In the opposite case, some of the results in this paper only hold for small balls whose radius depends on ${\rm diam}\, X$.

Next, we present some useful results concerning Newtonian spaces with zero boundary values. Proposition \ref{hardyineq} provides a characterization for $N_0^{1,q}$-functions by means of the Hardy inequality. Lemma \ref{compactnessr} gives a sufficient condition for a sequence of $N_0^{1,q}$-functions to converge to a $N_0^{1,q}$-function. Finally, Proposition \ref{iintersection} shows that $N_0^{1,q}$ can be presented as an intersection of $N_0^{1,q}$ and of zero Newtonian spaces with lower exponents. For further details and the proofs of the next results we refer the reader to \cite{MZG}.

\begin{proposition}[\cite{MZG}, Proposition 2.5] \label{hardyineq} Let $X$ be a proper, doubling, LLC metric measure space supporting a weak $(1,s)$-Poincar\'e inequality for some $1<s<q$, and suppose that $\Omega$ is a bounded domain in $X$ such that $X\setminus \Omega$ is uniformly $q$-fat. Then there is a constant $C$, depending only on $\Omega$ and $q$, such that $u\in N^{1,q}(X)$ is in $N^{1,q}_0(\Omega)$ if and only if
$$
\int_{\Omega}\left(\frac{\vert u(x)\vert}{{\rm dist}(x, X\setminus \Omega)}\right)^q\, \dd\mu\leq C \int_{\Omega}g_u(x)^q\, \dd\mu.
$$
\end{proposition}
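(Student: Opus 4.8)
The plan is to treat the two implications separately; the converse (``if'') is soft, while the Hardy inequality itself (``only if'') carries all the difficulty. For the converse, assume the displayed inequality holds. Choose Lipschitz cutoffs $\eta_j$ with $0\le\eta_j\le 1$, equal to $1$ on $\{x:{\rm dist}(x,X\setminus\Omega)\ge 2^{-j}\}$, equal to $0$ on $\{x:{\rm dist}(x,X\setminus\Omega)\le 2^{-j-1}\}$, and with $g_{\eta_j}\le C2^{j}$ supported in the thin annulus $A_j=\{2^{-j-1}<{\rm dist}(\cdot,X\setminus\Omega)<2^{-j}\}$. Each $\eta_j u$ is supported in a bounded closed subset of $\Omega$, vanishes on $X\setminus\Omega$, and lies in $N^{1,q}(X)$ — its upper gradient $\eta_jg_u+|u|g_{\eta_j}$ is $L^q$-integrable precisely because the Hardy bound controls $\int_{A_j}|u|^q2^{jq}\,\dd\mu$ — hence $\eta_j u\in N^{1,q}_0(\Omega)$. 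The Hardy estimate together with dominated convergence gives $\eta_j u\to u$ in $N^{1,q}(X)$, and since $N^{1,q}_0(\Omega)$ is closed in $N^{1,q}(X)$ we conclude $u\in N^{1,q}_0(\Omega)$.

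For necessity, let $u\in N^{1,q}_0(\Omega)$. Since $u=0$ $q$-q.e.\ on $X\setminus\Omega\supseteq\partial\Omega$, it is $0$ $\mu$-a.e.\ there, and because the minimal weak upper gradient vanishes $\mu$-a.e.\ on $\{u=0\}$ we may treat $g_u$ as supported in $\Omega$. As $X$ is proper, LLC, doubling and supports a weak $(1,s)$-Poincar\'e inequality with $s<q$, and $X\setminus\Omega$ is uniformly $q$-fat, Proposition \ref{q0fat} yields an exponent $q_0$, which we take with $s\le q_0<q$, so that $X\setminus\Omega$ is uniformly $q_0$-fat; then $X$ also supports a weak $(1,q_0)$-Poincar\'e inequality, and by Remark \ref{gradients} $g_u$ is the minimal $q_0$-weak upper gradient of $u$, with $u\in N^{1,q_0}_0(\Omega)$. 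The crux is the \emph{pointwise $q_0$-Hardy inequality}: for $q$-q.e.\ $x\in\Omega$, writing $d_x={\rm dist}(x,X\setminus\Omega)$,
\begin{equation*}
|u(x)|\le C\,d_x\,\bigl(M_{Rd_x}(g_u^{q_0})(x)\bigr)^{1/q_0},
\end{equation*}
where $M_{\rho}$ denotes the Hardy--Littlewood maximal function restricted to balls of radius $\le\rho$, and $C,R$ depend only on the data. Granting this, raise it to the power $q$, integrate over $\Omega$, bound $M_{Rd_x}(g_u^{q_0})(x)\le M(g_u^{q_0})(x)$, and use the boundedness of $M$ on $L^{q/q_0}(X)$ — available since $q/q_0>1$ and $\mu$ is doubling — together with the support of $g_u$, to obtain
\begin{equation*}
\int_\Omega\Bigl(\frac{|u|}{d_x}\Bigr)^q\dd\mu\le C\int_X\bigl(M(g_u^{q_0})\bigr)^{q/q_0}\dd\mu\le C\int_X g_u^{q}\,\dd\mu=C\int_\Omega g_u^{q}\,\dd\mu ,
\end{equation*}
which is the claim.

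It remains to obtain the pointwise inequality, and this is where uniform fatness enters and is the step I expect to be the main obstacle. Fix $x\in\Omega$ and, by properness, a point $\xi_x\in X\setminus\Omega$ with $d(x,\xi_x)=d_x$. A telescoping sum over the concentric balls $B(x,2^{-k}d_x)$, $k\ge1$, using the weak $(1,q_0)$-Poincar\'e inequality at each scale, controls $|u(x)-u_{B(x,2d_x)}|$ by $C\,d_x(M_{Rd_x}(g_u^{q_0})(x))^{1/q_0}$ at a Lebesgue point $x$ of $u$. For $|u_{B(x,2d_x)}|$ one applies the Maz'ya-type estimate of Proposition \ref{prop2.3} on $B=B(x,16d_x)$: since $\xi_x\in\tfrac12B$ and $u$ vanishes $q_0$-q.e.\ on $X\setminus\Omega$, the zero set $S=\{y\in\tfrac12B:u(y)=0\}$ contains $(X\setminus\Omega)\cap B(\xi_x,d_x)$, and uniform $q_0$-fatness of $X\setminus\Omega$ at $\xi_x$, together with the comparability ${\rm cap}_{q_0}(B(\xi_x,d_x);B(\xi_x,2d_x))\simeq d_x^{-q_0}\mu(B(x,d_x))$ and the standard comparison of variational capacities in concentric balls of comparable radii, gives ${\rm cap}_{q_0}(S;B)\ge C\,d_x^{-q_0}\mu(B)$. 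Feeding this lower bound into Proposition \ref{prop2.3} and using doubling and H\"older yields $|u_{B(x,2d_x)}|\le C(\dashint_B|u|^t\,\dd\mu)^{1/t}\le C\,d_x(M_{Rd_x}(g_u^{q_0})(x))^{1/q_0}$, and adding the two contributions proves the pointwise $q_0$-Hardy inequality. The delicate points are precisely the capacitary comparison across the boundary — which rests on the fatness hypothesis — and the fact that one must work with the strictly smaller exponent $q_0$: without the gap $q_0<q$ the final maximal-function step, and hence the whole estimate, would break down.
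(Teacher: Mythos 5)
The paper itself offers no proof of this proposition: it is reproduced verbatim from [MZG], and the text immediately preceding it (``For further details and the proofs of the next results we refer the reader to [MZG]'') explicitly delegates the proof to that source. Your sketch takes the standard route in the metric-space literature --- self-improve fatness to an exponent $q_0<q$, derive a pointwise $q_0$-Hardy inequality by telescoping the Poincar\'e inequality together with the Maz'ya capacitary estimate on a boundary ball, then integrate via the maximal function theorem on $L^{q/q_0}$, with cutoffs and closedness of $N^{1,q}_0(\Omega)$ handling the converse --- which is the mechanism behind the proof in [MZG] and its sources, and your sketch is sound as written modulo the routine points you yourself flag (quasicontinuous representatives, the shifted-ball capacity comparison).
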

\begin{remark}\label{C uniformly bound}
The constant $C$ in the above proposition formally depends on $q$. However, if $q$ varies inside a bounded interval, then the arguments in the proof of Proposition \ref{hardyineq} show that the appropriate constants are uniformly bounded. For this reason, since in our case all exponents vary inside a bounded interval $(s,s^*)$ we omit the dependence of the constant on $q$.
\end{remark}

\begin{lemma}[\cite{MZG}, Lemma 2.6]\label{compactnessr}
In the setting of Proposition \ref{hardyineq}, let $u_i\in N^{1,q}_0(\Omega)$ be a sequence that is bounded in $ N^{1,q}_0(\Omega)$. If $u_i\rightarrow u \ \mu$-a.e., then $u\in  N^{1,q}_0(\Omega)$.
\end{lemma}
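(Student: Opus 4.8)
The plan is to exploit the reflexivity of the Newtonian space together with Mazur's lemma. Since $(u_i)$ is bounded in $N^{1,q}_0(\Omega)$, it is in particular bounded in $N^{1,q}(X)$, and under the standing hypotheses ($X$ complete and doubling, supporting a Poincar\'e inequality, $q>1$) the space $N^{1,q}(X)$ is reflexive. Hence, after passing to a subsequence (not relabelled), there is $v\in N^{1,q}(X)$ with $u_i\rightharpoonup v$ weakly in $N^{1,q}(X)$. By Mazur's lemma one can then choose convex combinations $\hat u_j=\sum_{k=j}^{N_j}a_{j,k}u_k$, with $a_{j,k}\ge 0$ and $\sum_k a_{j,k}=1$, such that $\hat u_j\to v$ strongly in $N^{1,q}(X)$; passing to a further subsequence, $\hat u_j\to v$ both $\mu$-a.e.\ and $q$-q.e.\ in $X$.

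Next I would identify $v$ with $u$ and locate it in $N^{1,q}_0(\Omega)$. Since each $u_k\to u$ $\mu$-a.e.\ and $\hat u_j$ is a convex combination of the $u_k$ with $k\ge j$, we also have $\hat u_j\to u$ $\mu$-a.e.; comparing with the previous step gives $v=u$ $\mu$-a.e., so in particular $u\in N^{1,q}(X)$. On the other hand, for each $j$ the function $\hat u_j$ vanishes $q$-q.e.\ on $X\setminus\Omega$, being a finite convex combination of functions with this property (a finite union of sets of $q$-capacity zero still has $q$-capacity zero); since $\hat u_j\to v$ $q$-q.e., it follows that $v=0$ $q$-q.e.\ on $X\setminus\Omega$, that is, $v\in N^{1,q}_0(\Omega)$. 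As $u$ coincides $\mu$-a.e.\ with $v$, we conclude $u\in N^{1,q}_0(\Omega)$.

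I expect the first step to be the delicate point: it rests on the reflexivity of $N^{1,q}(X)$ (valid in this generality but non-elementary) and on the compatibility of the two modes of convergence — the $\mu$-a.e.\ convergence to $u$ inherited from the hypothesis, and the $\mu$-a.e./$q$-q.e.\ convergence to $v$ produced by Mazur's lemma. If one prefers to avoid reflexivity of the Newtonian space, an alternative is to first show $u\in N^{1,q}(X)$ by lower semicontinuity of the Newtonian energy under $\mu$-a.e.\ convergence of the bounded sequence $(u_i)$ (see \cite{BB}), then apply Proposition \ref{hardyineq} to each $u_i$ to obtain the uniform bound $\int_{\Omega}\big(|u_i|/\mathrm{dist}(\cdot,X\setminus\Omega)\big)^{q}\,\dd\mu\le C\int_{\Omega}g_{u_i}^{q}\,\dd\mu\le C\sup_i\|u_i\|_{N^{1,q}}^{q}$, and finally pass to the limit by Fatou's lemma; here the uniform $q$-fatness of $X\setminus\Omega$ is exactly what makes Proposition \ref{hardyineq} available and upgrades the resulting integrability of $u/\mathrm{dist}(\cdot,X\setminus\Omega)$ on $\Omega$ into membership in $N^{1,q}_0(\Omega)$.
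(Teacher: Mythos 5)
The paper does not supply its own proof of this lemma: after the statement it records only that the result ``follows from Proposition~\ref{hardyineq}'' (the Hardy-inequality characterization of $N^{1,q}_0(\Omega)$), citing \cite{MZG,KKM}. That is precisely the alternative you sketch at the end: show $u\in N^{1,q}(X)$ with a norm bound by weak $L^q$-compactness of the gradients together with a Fuglede-type lemma, propagate the Hardy bound for the $u_i$ to $u$ by Fatou, and invoke Proposition~\ref{hardyineq} in the sufficient direction; this is where the uniform $q$-fatness of $X\setminus\Omega$ enters. Your primary argument, via reflexivity of $N^{1,q}(X)$ and Mazur's lemma, is a genuinely different and correct route; it can even be shortened, since $N^{1,q}_0(\Omega)$ is a closed (hence weakly closed) subspace of $N^{1,q}(X)$, so a weak subsequential limit $v$ of the bounded sequence already lies in $N^{1,q}_0(\Omega)$, and identifying $v$ with $u$ via the $\mu$-a.e.\ convergence (as you do) finishes the proof without Mazur at all. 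The trade-off between the two is worth spelling out: the reflexivity route is conceptually cleaner and, notably, does not use the fatness of $X\setminus\Omega$ (that hypothesis is an artefact of the Hardy proof), but it leans on the reflexivity of $N^{1,q}(X)$, a non-elementary Cheeger-type theorem available under the standing hypotheses; the Hardy route avoids this, needing only reflexivity of $L^q$, at the price of invoking the self-improved fatness through Proposition~\ref{hardyineq}. One small point to watch in your Hardy variant: Fatou yields $\int_\Omega(|u|/\mathrm{dist}(\cdot,X\setminus\Omega))^q\,\dd\mu\le C\liminf_i\int_\Omega g_{u_i}^q\,\dd\mu<\infty$, i.e.\ finiteness, not literally the inequality for $u$ with the same constant $C$ as in Proposition~\ref{hardyineq}; the sufficient direction should be read as ``$u\in N^{1,q}(X)$ with finite Hardy integral implies $u\in N^{1,q}_0(\Omega)$'', which is exactly what the Fatou step delivers.
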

\noindent We note that Lemma \ref{compactnessr} was originally formulated in \cite{KKM} for $(X,d,\mu)$ doubling and for $\Omega$ open such that $X\setminus\Omega$ satisfies a measure thickness assumption. Even though a measure thickness condition is stronger than a fatness assumption, the lemma still follows from Proposition \ref{hardyineq} (for further details see \cite{MZG} and the references therein). The assertion of the next proposition depends on the set $\Omega$. Even in $\mathbb{R}^n$ some type of thickness assumption on the domain is needed (see \cite{HedKil}).

\begin{proposition}[\cite{MZG}, Proposition 2.7]\label{iintersection} Let $X$ be a proper, doubling, LLC metric measure space supporting a weak $(1,s)$-Poincar\'e inequality for some $1<s<q$ and suppose that $\Omega$ is a bounded domain in $X$ such that $X\setminus\Omega$ is uniformly $q$-fat. Then
\begin{equation}\label{intersection}
N^{1,q}_0(\Omega)=N^{1,q}(\Omega)\cap \bigcap_{\epsilon>0}N^{1,q-\epsilon}_0(\Omega).
\end{equation}
\end{proposition}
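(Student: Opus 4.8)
The plan is to establish the two inclusions in \eqref{intersection} separately. The inclusion ``$\subseteq$'' is elementary. If $u\in N^{1,q}_0(\Omega)$ then in particular $u\in N^{1,q}(\Omega)$, and since $u$ vanishes $q$-q.e. on $X\setminus\Omega$ it is supported in $\overline\Omega$, which is bounded, so $\mu(\overline\Omega)<\infty$. Hence, by H\"older's inequality, $u\in L^{q-\epsilon}(X)$, and the minimal $q$-weak upper gradient of $u$ — being supported in $\overline\Omega$ — lies in $L^{q-\epsilon}(X)$ and is a $(q-\epsilon)$-weak upper gradient of $u$, for every $0<\epsilon<q-1$; moreover a set of $q$-capacity zero has $(q-\epsilon)$-capacity zero, so $u=0$ $(q-\epsilon)$-q.e. on $X\setminus\Omega$, i.e. $u\in N^{1,q-\epsilon}_0(\Omega)$. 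As this holds for all $\epsilon>0$, we get ``$\subseteq$''.

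For ``$\supseteq$'', let $u$ belong to the right-hand side. Since $X\setminus\Omega$ is uniformly $q$-fat, Proposition~\ref{q0fat} yields $p_0<q$ for which $X\setminus\Omega$ is uniformly $p_0$-fat, and hence — by the elementary monotonicity of uniform fatness in the exponent — uniformly $(q-\epsilon)$-fat for all sufficiently small $\epsilon>0$; for such $\epsilon$ the space still carries a weak $(1,s)$-Poincar\'e inequality with $s<q-\epsilon$. Thus the ``only if'' part of Proposition~\ref{hardyineq}, applied with exponent $q-\epsilon$ to $u\in N^{1,q-\epsilon}_0(\Omega)$, gives
\[
\int_\Omega\Big(\frac{|u(x)|}{\operatorname{dist}(x,X\setminus\Omega)}\Big)^{q-\epsilon}\dd\mu\le C\int_\Omega g_u^{\,q-\epsilon}\dd\mu ,
\]
where, by Remark~\ref{C uniformly bound}, $C$ can be chosen independent of $\epsilon$. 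Since $u\in N^{1,q}(\Omega)$, $\mu(\Omega)<\infty$ and the minimal $(q-\epsilon)$- and $q$-weak upper gradients of $u$ coincide a.e. on $\Omega$ (cf. Remark~\ref{gradients}), H\"older's inequality bounds the right-hand side by $C\,\mu(\Omega)^{\epsilon/q}\big(\int_\Omega g_u^{\,q}\dd\mu\big)^{(q-\epsilon)/q}$. Letting $\epsilon\to0$ and applying Fatou's lemma on the left (the integrands converge pointwise on $\Omega$, where $\operatorname{dist}(\cdot,X\setminus\Omega)$ is finite and positive) yields the Hardy bound at the exponent $q$,
\[
\int_\Omega\Big(\frac{|u(x)|}{\operatorname{dist}(x,X\setminus\Omega)}\Big)^{q}\dd\mu\le C\int_\Omega g_u^{\,q}\dd\mu<\infty .
\]

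To finish, I would truncate near the boundary. Fix Lipschitz cut-offs $\eta_j$ with $\eta_j=0$ on $\{\operatorname{dist}(\cdot,X\setminus\Omega)\le 1/j\}$, $\eta_j=1$ on $\{\operatorname{dist}(\cdot,X\setminus\Omega)\ge 2/j\}$, $0\le\eta_j\le1$, and $g_{\eta_j}\le Cj\,\chi_{\{1/j<\operatorname{dist}(\cdot,X\setminus\Omega)<2/j\}}$. Each $u\eta_j$ is supported in a compact subset of $\Omega$, hence $u\eta_j\in N^{1,q}_0(\Omega)$, with $g_{u\eta_j}\le\eta_j g_u+|u|g_{\eta_j}$, and
\[
\int_X\big(|u|\,g_{\eta_j}\big)^q\dd\mu\le C\int_{\{1/j<\operatorname{dist}(\cdot,X\setminus\Omega)<2/j\}}\Big(\frac{|u|}{\operatorname{dist}(\cdot,X\setminus\Omega)}\Big)^q\dd\mu\le C\int_\Omega\Big(\frac{|u|}{\operatorname{dist}(\cdot,X\setminus\Omega)}\Big)^q\dd\mu .
\]
Consequently $(u\eta_j)$ is bounded in $N^{1,q}_0(\Omega)$ thanks to the $q$-Hardy bound, and $u\eta_j\to u$ $\mu$-a.e. since $\eta_j\to1$ on $\Omega$. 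Lemma~\ref{compactnessr} then gives $u\in N^{1,q}_0(\Omega)$.

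The main obstacle is the passage $\epsilon\to0$: one needs the Hardy constant to stay bounded as $\epsilon\to0$ (exactly Remark~\ref{C uniformly bound}), and one needs that satisfying the limiting Hardy bound \emph{together with} membership in $N^{1,q}(\Omega)$ is enough to conclude membership in $N^{1,q}_0(\Omega)$ — which is what the cut-off argument plus Lemma~\ref{compactnessr} delivers, circumventing a direct use of the ``if'' direction of Proposition~\ref{hardyineq} (whose hypothesis is $u\in N^{1,q}(X)$). The Leibniz-type upper gradient estimate, the bound on $g_{\eta_j}$, and the capacity monotonicity used above are routine.
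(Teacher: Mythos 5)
The paper itself does not prove this proposition; it cites \cite{MZG}, so the relevant comparison is to Proposition 2.7 there. Your argument is correct and runs on essentially the same rails as \cite{MZG}: self-improvement of fatness (Proposition \ref{q0fat}) together with the easy upward monotonicity of fatness in the exponent to obtain uniform $(q-\epsilon)$-fatness; the Hardy characterization (Proposition \ref{hardyineq}) at exponent $q-\epsilon$, with the uniform constant of Remark \ref{C uniformly bound}; H\"older plus Fatou to pass to exponent $q$; and a cut-off sequence together with Lemma \ref{compactnessr} to conclude. Your observation that the ``if'' direction of Proposition \ref{hardyineq} assumes $u\in N^{1,q}(X)$, which is not available a priori here (globally one only knows $u\in N^{1,q-\epsilon}(X)$), is well taken, and the truncation route is exactly the right way to close it --- indeed it is essentially how that direction of Proposition \ref{hardyineq} is itself established.

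One small circularity to iron out: when you bound $\int_\Omega g_u^{q-\epsilon}\,\dd\mu$ by H\"older you appeal to Remark \ref{gradients}, but its hypothesis ($u\in N^{1,q}(X)$) is precisely what you are in the process of proving. You do not need the full identity of minimal weak upper gradients, only a one-sided comparison: since $\mu(\Omega)<\infty$, any $q$-weak upper gradient of $u$ in $\Omega$ is a $(q-\epsilon)$-weak upper gradient there (modulus monotonicity on sets of finite measure), so the minimal $(q-\epsilon)$-weak upper gradient of $u$ in $\Omega$ is $\le$ the minimal $q$-weak upper gradient $\mu$-a.e. in $\Omega$. With this, H\"older gives $\int_\Omega g_u^{q-\epsilon}\,\dd\mu \le \mu(\Omega)^{\epsilon/q}\bigl(\int_\Omega g_u^q\,\dd\mu\bigr)^{(q-\epsilon)/q}$ without circular reference. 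Everything else (the $\subseteq$ inclusion, the Leibniz estimate $g_{u\eta_j}\le\eta_j g_u+|u|g_{\eta_j}$, the $L^q$-bound on $|u|g_{\eta_j}$ via the Hardy inequality, and the passage via Lemma \ref{compactnessr}) is sound.
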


\textit{Throughout this paper, we suppose that $(X, d, \mu)$ is a complete, locally linearly connected (LLC), metric measure space with metric $d$ and a doubling Borel regular measure $\mu$. We work on $\Omega\subset X$, an open and bounded subset such that $X \setminus \Omega$ is of positive $q$-capacity and uniformly $p$-fat, with $1<p<q$. Moreover, we assume that $X$ supports a weak $(1,p)$-Poincar\'{e} inequality. From now on and without further notice, we fix $1<s<p<q<s^*$ for which $X$ also admits a weak  $(1,s)$-Poincar\'e  inequality. Such $s$ is given by Theorem \ref{spoincare} and will be used in various of our results.}

\section{$(p,q)$-Quasiminimizers}\label{newsection3}
In this note, we are interested in the $(p,q)$-Dirichlet integral given by \eqref{J}, for some exponents $1<p<q$. As aforementioned, since we work in metric measure spaces and due to the methods we use, we treat it under sharp assumptions. That is, we assume that the coefficient functions $a,b:X \to \mathbb{R}$ are bounded and measurable with $0<\alpha\leq a,b \leq \beta$, for some positive constants $\alpha, \beta$. Now, we introduce the definition of $(p,q)$-quasiminimizers of integral \eqref{J}.
\begin{definition}\label{qm}
	A function $u\in N^{1,q}({\Omega})$ is a $(p,q)$-quasiminimizer on $\Omega$ if there exists $K>0$, called quasiminimizing constant, such that for every open $\Omega'\Subset\Omega$ and every test function $v \in N^{1,q}({\Omega'})$ with $u-v\in  N^{1,q}_0({\Omega'})$ the inequality
	\begin{align}\label{min}
		\int_{\Omega'} (a g_{u}^p+bg_{u}^q)\, \dd \mu
		\leq& K \int_{\Omega'}(a g_{v}^p\, + b g_{v}^q) \,\dd \mu 
	\end{align} 
	holds,  where $g_{u}$, $g_{v}$ are the minimal $q$-weak upper gradients of $u$ and $v$ in ${\Omega}$, respectively.
	Furthermore, a function $u\in N^{1,q}({\Omega})$ is a global $(p,q)$-quasiminimizer on $\Omega$ if \eqref{min} is satisfied with $\Omega$ instead of $\Omega'$, for all $v \in N^{1,q}({\Omega})$, with $u-v\in  N^{1,q}_0({\Omega})$.
\end{definition}

From now on, to simplify notation we refer to global $(p,q)$-quasiminimizers by just writing $(p,q)$-quasiminimizers.

Next, we report the De Giorgi Lemma, which has a key role in our paper and whose proof can be found in \cite{NP}.
In order to do so, we start introducing some notations. We denote $S_{k, r}= \{x \in B(y,r) \cap {\Omega}: u(x)>k\},$ where $k \in \mathbb{R}$ and $r>0$.  Also, for every $y \in \Omega$, we define $R(y)=\frac{d(y,X\setminus \Omega)}{2}$.
\begin{lemma}[\cite{NP}, Lemma 3.1]\label{lem 4.1}
	Let $u\in N^{1,q}({\Omega})$ be a $(p,q)$-quasiminimizer. If $0<\rho<R< R(y)$, then there exists $C$ such that the following De Giorgi type inequality 
	\begin{equation}\label{5.4}
		\int_{S_{\alpha, \rho}}(ag_{u} ^p+bg_{u} ^q)\, \dd \mu	\leq C\left( \frac{1}{(R-\rho)^{p}} \int_{S_{k, R}} a(u-k) ^p\, \dd \mu+\frac{1}{(R-\rho)^{q}} \int_{S_{k, R}}b(u-k)^q \,\dd \mu \right),
	\end{equation}
	is satisfied. The constant $C$ depends on $K$, given by Definition \ref{qm}, and $q$.
\end{lemma}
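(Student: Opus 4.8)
The plan is to derive the De Giorgi type inequality \eqref{5.4} by a standard Caccioppoli-type argument, using the quasiminimality inequality \eqref{min} with a carefully chosen test function built from a Lipschitz cutoff. Fix $0<\rho<R<R(y)$ and let $\eta$ be a Lipschitz function with $0\le\eta\le 1$, $\eta\equiv 1$ on $B(y,\rho)$, $\eta\equiv 0$ outside $B(y,R)$, and $g_\eta\le \tfrac{C}{R-\rho}$. Set $w=(u-k)_+$ and define the competitor $v=u-\eta^\sigma w$ for a suitable exponent $\sigma$ (one typically takes $\sigma=q$ so that the cutoff powers absorb cleanly in the $q$-part while still controlling the $p$-part). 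Since $\eta^\sigma w\in N^{1,q}_0(B(y,R)\cap\Omega)$ and $B(y,R)\Subset\Omega$ because $R<R(y)$, the pair $(u,v)$ is admissible in \eqref{min} with $\Omega'=B(y,R)\cap\Omega$. On $\{u\le k\}$ we have $w=0$ and $g_v=g_u$; on $\{u>k\}$ the Leibniz rule for minimal upper gradients gives $g_v\le (1-\eta^\sigma)g_u+\sigma\eta^{\sigma-1}w\,g_\eta$ $\mu$-a.e.

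The core computation is then to insert this bound into the right-hand side of \eqref{min}, splitting the domain of integration into $S_{k,R}$ (where $u>k$) and its complement. On the complement the $p$- and $q$-integrands of $u$ and $v$ agree, so those terms cancel against the left-hand side after restricting the left integral to $S_{\alpha,\rho}\subset S_{k,R}$. On $S_{k,R}$ I would use the elementary inequalities $(x+y)^p\le 2^{p-1}(x^p+y^p)$ and $(x+y)^q\le 2^{q-1}(x^q+y^q)$ together with $0\le 1-\eta^\sigma\le 1$ and $\eta^{\sigma-1}\le 1$ to bound
\begin{equation*}
a g_v^p + b g_v^q \le C\big(a(1-\eta^\sigma)^p g_u^p + b(1-\eta^\sigma)^q g_u^q\big) + C\big(a\, g_\eta^p\, w^p + b\, g_\eta^q\, w^q\big)
\end{equation*}
$\mu$-a.e.\ on $S_{k,R}$, with $C=C(p,q,\sigma)$. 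Using $g_\eta\le C/(R-\rho)$ the last group produces exactly the right-hand side of \eqref{5.4}; it remains to absorb the first group into the left-hand side.

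The absorption is the step I expect to be the main obstacle, and it is where quasiminimality (rather than minimality) forces care. After plugging in, \eqref{min} reads, schematically,
\begin{equation*}
\int_{S_{k,R}} (a g_u^p + b g_u^q)\,\dd\mu \le CK \int_{S_{k,R}}\!\big(a(1-\eta^\sigma)^p g_u^p + b(1-\eta^\sigma)^q g_u^q\big)\,\dd\mu + CK\!\left(\frac{\int_{S_{k,R}} a w^p}{(R-\rho)^p} + \frac{\int_{S_{k,R}} b w^q}{(R-\rho)^q}\right),
\end{equation*}
where I have already discarded the $\Omega'\setminus S_{k,R}$ contributions. Because $(1-\eta^\sigma)^p$ and $(1-\eta^\sigma)^q$ are not pointwise small, one cannot absorb directly; instead one uses that $(1-\eta^\sigma)\equiv 0$ on $B(y,\rho)$, so the ``bad'' integrals are supported on $S_{k,R}\setminus S_{k,\rho}$, i.e.\ on the annulus. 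The standard remedy is a hole-filling / iteration argument: add $CK\int_{S_{k,\rho}}(ag_u^p+bg_u^q)$ to both sides to convert the annular integral into a full-ball integral times a factor $\theta=\tfrac{CK}{1+CK}<1$, obtaining $\Phi(\rho)\le \theta\,\Phi(R) + (\text{tail})$ for $\Phi(r):=\int_{S_{k,r}}(ag_u^p+bg_u^q)$, and then apply the standard iteration lemma (e.g.\ Giaquinta--Giusti's lemma on $\Phi(\rho)\le\theta\Phi(R)+A(R-\rho)^{-p}+B(R-\rho)^{-q}$) to conclude $\Phi(\rho)\le C\big(A(R-\rho)^{-p}+B(R-\rho)^{-q}\big)$ with $A=\int_{S_{k,R}}a w^p$, $B=\int_{S_{k,R}}bw^q$ and $C=C(K,p,q)$. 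Since $S_{\alpha,\rho}\subset S_{k,\rho}$ when $\alpha\ge k$ (and in general one just needs $\Phi$ restricted appropriately), this yields \eqref{5.4}. Two technical points deserve attention: the iteration lemma must handle the two different negative powers $(R-\rho)^{-p}$ and $(R-\rho)^{-q}$ simultaneously — which it does, since the argument is monotone in each term separately — and the constant dependence must be tracked to confirm it depends only on $K$ and $q$ (the $p$-dependence being absorbed since $p<q$ and the exponents lie in a fixed bounded range), as claimed in the statement.
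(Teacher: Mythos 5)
Your architecture is the standard De Giorgi/Caccioppoli route and is surely the one used in \cite{NP}: test with $v=u-\eta^\sigma(u-k)_+$ on $\Omega'=B(y,R)\Subset\Omega$, apply the Leibniz rule for minimal weak upper gradients, use $g_\eta\lesssim (R-\rho)^{-1}$, observe that the ``bad'' term $(1-\eta^\sigma)g_u$ lives on the annulus, hole-fill, and iterate with Lemma~6.1 of \cite{G}. (Taking $\sigma=q$ is harmless but unnecessary; with hole-filling $\sigma=1$ works just as well.) The constant-tracking remarks are also in order, since $1<p<q$ and both lie in a bounded range.

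There is, however, a genuine gap in the ``cancellation'' step. With Definition~\ref{qm} as written — the integrals in \eqref{min} run over the \emph{whole} of $\Omega'$ — plugging $v=u-\eta w$, $w=(u-k)_+$, into \eqref{min} with $\Omega'=B(y,R)$ and using that $g_v=g_u$ $\mu$-a.e.\ on $\{u\le k\}=\{u=v\}$ (locality of minimal weak upper gradients) yields
\begin{equation*}
\int_{S_{k,R}}(ag_u^p+bg_u^q)\,\dd\mu \;\le\; (K-1)\int_{\{u\le k\}\cap B(y,R)}(ag_u^p+bg_u^q)\,\dd\mu \;+\; K\int_{S_{k,R}}(ag_v^p+bg_v^q)\,\dd\mu.
\end{equation*}
For a genuine quasiminimizer ($K>1$) the term $(K-1)\int_{\{u\le k\}\cap B(y,R)}(\ldots)$ does \emph{not} cancel and is not controlled by the right-hand side of \eqref{5.4}, so the argument as stated stalls. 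The correct move is to first upgrade \eqref{min} to its equivalent version over $\{u\ne v\}$, namely
\begin{equation*}
\int_{\{u\ne v\}\cap\Omega'}(ag_u^p+bg_u^q)\,\dd\mu \;\le\; K\int_{\{u\ne v\}\cap\Omega'}(ag_v^p+bg_v^q)\,\dd\mu,
\end{equation*}
which follows from Definition~\ref{qm} by applying \eqref{min} on a decreasing family of open sets $U_\epsilon\supset\{u\ne v\}\cap\Omega'$ with $\mu(U_\epsilon\setminus\{u\ne v\})<\epsilon$ (outer regularity, $u-v\in N_0^{1,q}(U_\epsilon)$ since $u=v$ on $\Omega'\setminus U_\epsilon$) and letting $\epsilon\to0$ via absolute continuity of the integral; see also Proposition~7.9 in \cite{BB}. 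Once that is in place, $S_{k,\rho}\subset\{u\ne v\}\subset S_{k,R}$, the super-level set is isolated cleanly, and your hole-filling and iteration go through. A second, minor, point: in the iteration lemma the tail constants must be the integrals over the \emph{fixed} outer ball (say $S_{k,R_1}$), not over the varying intermediate $S_{k,R}$, otherwise the hypothesis of Lemma~6.1 in \cite{G} is not satisfied as written.
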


\begin{remark}
	We note that inequality \eqref{5.4} is equivalent to the following
	\begin{align}\label{Cacioppoli}
		\int_{B(y,\rho)}(ag_{u} ^p+bg_{u} ^q)\, \dd \mu	& \leq C \Bigg( \frac{1}{(R-\rho)^{p}} \int_{B(y,R)} a(u-k) ^p_+ \,\dd \mu \nonumber \\  & \hspace{1.5cm}+\frac{1}{(R-\rho)^{q}} \int_{B(y,R)}b(u-k)^q_+ \,\dd \mu\Bigg),
	\end{align} where $(u-k)_+= \max\{u-k,0\}$.
\end{remark}

\section{Higher integrability property }\label{Sec3}

This section is devoted to prove global higher integrability for upper gradients of $(p,q)$- quasiminimizers with fixed boundary data belonging to a slightly better Newtonian space. Let $u\in N^{1,q}(\Omega)$, we say that $u$ is a $(p, q)$-quasiminimizer with boundary data $w\in N^{1,q}(\Omega)$, if $w-u\in N^{1,q}_{0}(\Omega)$. 
A complete survey on global higher integrability of gradients in the Euclidean case can be found in the book by Kinnunen, Lehrb\"{a}ck and V\"{a}h\"{a}kangas in \cite{KLM} for weak solutions of $p$-Laplace equation. In the general metric setting, the improvement of integrability is obtained by using a metric version of the Gehring Lemma, whose proof can be found, for example, in \cite{Maa} or \cite{ZG}. We remark that this lemma holds in all doubling metric measure spaces. 

\begin{lemma}\label{Gehring} Let $\sigma\in [s_0,s_1]$, where $s_0,s_1>1$ are fixed. Let $g\in L^{\sigma}_{\textrm{loc}}(X)$ and $f\in L^{s_1}_{\textrm{loc}}(X)$ be non-negative functions. Assume that there exists constant $C_G>1$ such that for every ball $B\subset \lambda B\subset X$ the following inequality
	\begin{equation*}
		\dashint_{B}g^{\sigma} \,\dd\mu\leq C_G\left[\left(\dashint_{\lambda B} g \,\dd\mu\right)^\sigma+\dashint_{\lambda B} f^\sigma \,\dd\mu\right]
	\end{equation*}
	holds for some $\sigma >1$. Then there exists $\epsilon_0>0$ such that $g\in L^{\tilde{s}}_{\textrm{loc}}(X,\mu)$ for $\tilde{s}\in[\sigma, \sigma+\epsilon_0[$ and moreover
	\begin{equation*}
		\left(\dashint_{B}g^{\tilde{s}} \,\dd\mu\right)^{\frac{1}{\tilde{s}}}\leq C\left[\left(\dashint_{\lambda B} g \,\dd\mu\right)^\sigma+\left(\dashint_{\lambda B} f^{\tilde{s}} \,\dd\mu\right)^{\frac{1}{\tilde{s}}}\right],
	\end{equation*}
	for $\epsilon_0$ and $C$ depending on $s_0,s_1, \lambda, C_d$ and $C_G$.
\end{lemma}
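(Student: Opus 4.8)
The plan is to establish the Gehring lemma by the standard Calder\'on--Zygmund / good-$\lambda$ machinery adapted to the metric setting, which is available in a doubling space because one has a Vitali-type covering theorem and a Hardy--Littlewood maximal function bounded on $L^r$ for $r>1$. First I would observe that, since the hypothesis is scale and translation invariant, it suffices to work on a fixed ball $B_0$ and prove the conclusion there; the family $\{B : \lambda B \subset X\}$ of admissible balls is rich enough for a Whitney/Calder\'on--Zygmund decomposition of super-level sets of the maximal function of $g^\sigma$ localized to $B_0$. So I would fix $B_0$, set $h = g^\sigma$ and consider, for a threshold $t$ larger than the average of $h$ on a suitably dilated ball, the level set $E_t = \{x \in B_0 : M_{B_0}h(x) > t\}$, where $M_{B_0}$ is a restricted (noncentered) maximal operator over admissible subballs. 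A stopping-time argument produces a countable family of pairwise disjoint balls $B_i$ with $t < \dashint_{B_i} h \le C_d^{?}\, t$ covering $E_t$ up to measure zero, and on the dilates $\lambda B_i$ the reverse-H\"older hypothesis applies.

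The heart of the argument is then the good-$\lambda$ inequality. On each stopping ball $B_i$ the hypothesis gives
\begin{equation*}
\dashint_{B_i} g^{\sigma}\,\dd\mu \le C_G\left[\left(\dashint_{\lambda B_i} g\,\dd\mu\right)^{\sigma} + \dashint_{\lambda B_i} f^{\sigma}\,\dd\mu\right],
\end{equation*}
and by H\"older $\bigl(\dashint_{\lambda B_i} g\bigr)^{\sigma} \le \dashint_{\lambda B_i} g^{\sigma} = \dashint_{\lambda B_i} h$; splitting $h$ at height $\delta t$ for a small parameter $\delta$ and using the comparability of averages on $B_i$ and $\lambda B_i$ (here the doubling of $\mu$ and the bounded dilation enter), one bounds the portion of $B_i$ where $h$ is large, plus an $f$-term. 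Summing over $i$, using disjointness of the $B_i$ and the weak-type/comparison estimates, yields an inequality of the form
\begin{equation*}
\mu(\{M_{B_0}h > \kappa t\}) \le C\,\delta\,\mu(\{M_{B_0}h > t\}) + C\,\mu(\{(M_{B_0}(f^{\sigma}))^{?} > \eta t\}) + (\text{controlled tail})
\end{equation*}
for suitable structural constants $\kappa>1$, $\eta>0$. Multiplying by $t^{\epsilon_0-1}$ and integrating in $t$ over $(t_0,\infty)$, then choosing $\delta$ (hence $\epsilon_0$) small enough that the first term on the right is absorbed into the left, produces $\int_{B_0} (M_{B_0}h)^{1+\epsilon_0} \lesssim (\text{average terms}) + \int_{\lambda B_0} f^{\sigma(1+\epsilon_0)}$; since $g^{\sigma} = h \le M_{B_0}h$ a.e.\ on $B_0$, this is exactly higher integrability of $g$ with exponent $\tilde s = \sigma(1+\epsilon_0)$, and tracking the constants shows $\epsilon_0$ and the final constant depend only on $s_0,s_1,\lambda,C_d,C_G$ (the role of $s_1$ being to keep $f\in L^{s_1}_{\mathrm{loc}}$ and hence the $f$-term finite, and to give a uniform range $[s_0,s_1]$ for $\sigma$).

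The main obstacle I anticipate is purely bookkeeping rather than conceptual: making the restricted maximal operator $M_{B_0}$ and the stopping-time decomposition interact cleanly with the bounded dilation factor $\lambda$, so that every ball one writes down is an admissible ball (i.e.\ $\lambda B \subset X$) to which the hypothesis genuinely applies, and so that the comparability constants between $\dashint_{B_i}$ and $\dashint_{\lambda B_i}$ are controlled solely by $C_d$. One must also be careful that the threshold $t_0$ at which the integration starts is chosen as a fixed multiple of $\dashint_{\lambda B_0} h + \dashint_{\lambda B_0} f^{\sigma}$, so that the ``tail'' contribution below $t_0$ is exactly the average terms appearing in the statement. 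Since all of this is classical and, as the authors note, carried out in \cite{Maa,ZG}, I would in the actual write-up simply invoke those references; the sketch above is how one would reconstruct it if needed.
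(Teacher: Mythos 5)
The paper does not prove Lemma~\ref{Gehring} at all: it states it and cites \cite{Maa} and \cite{ZG} for the proof, which is precisely the option you reach at the end of your sketch. Your outline of the Calder\'on--Zygmund stopping-time decomposition, the good-$\lambda$ inequality, and the absorption-after-integration-in-$t$ step is a correct reconstruction of the argument in those references; the only cosmetic mismatch is that you parametrize the gain as $\tilde s = \sigma(1+\epsilon_0)$ while the lemma states $\tilde s \in [\sigma, \sigma+\epsilon_0)$, which is the same thing after renaming $\sigma\epsilon_0 \mapsto \epsilon_0$. Your remarks about restricting to admissible balls (those with $\lambda B \subset X$) and about choosing the base threshold $t_0$ as a fixed multiple of the averages are the right places to be careful, and they are handled in \cite{Maa,ZG}; so, like the authors, you should simply invoke those references rather than reproduce the argument.
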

Now, we state the global higher integrability result for the minimal weak upper gradient of a $(p,q)$-quasiminimizer. As in \cite{MZG}, the proof is based on the capacity version of the Sobolev-Poincar\'e inequality in Proposition \ref{prop2.3}, which implies a reverse H\"older inequality for the minimal weak upper gradient. Global higher integrability then follows from Lemma \ref{Gehring}. 
\begin{theorem}\label{Theorem 5.2}
	Let $w \in N^{1,\bar{q}}(\Omega)$ for some $\bar{q}>q$. If $u \in N^{1,q}(\Omega)$ is a $(p,q)$-quasiminimizer with boundary data $w$, then there exists $\delta_0\in]0,q-\bar{q}[$ such that $g_u \in L^{q+\delta}(\Omega)$ for all $\delta \in ]0, \delta_ 0[$ and
	\begin{equation*}
		\left(\dashint_{\Omega}g_u^{q+\delta} \, \dd\mu\right)^{\frac{1}{q+\delta}}\leq C \left(\left(	\dashint_{\Omega}g_u^{q} \,\dd\mu\right)^{\frac{1}{q}}+ \left(		\dashint_{\Omega} g_w^{q+\delta} \,\dd\mu\right)^{\frac{1}{q+\delta}} +1\right),
	\end{equation*}
	where $\delta_0$ and $C$ depend on $p$ and $q$. 
\end{theorem}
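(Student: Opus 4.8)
The plan is to establish a reverse Hölder inequality for $g_u$ on two types of balls—those compactly contained in $\Omega$ and those meeting $X\setminus\Omega$—and then feed the resulting uniform estimate into the Gehring Lemma (Lemma \ref{Gehring}). Throughout, one works with the exponent $s<p<q<s^*$ fixed after Remark \ref{rem2poin}, so that both the $(1,s)$-Poincar\'e inequality and its self-improved consequences (Theorem \ref{sstars}, Corollary \ref{coropoinca}, Proposition \ref{prop2.3}) are available, and one uses that $X\setminus\Omega$ is uniformly $p$-fat, hence (Proposition \ref{q0fat}) uniformly $p_0$-fat for some $p_0<p$. The coefficient bounds $0<\alpha\le a,b\le\beta$ are used freely to pass between $\int a g_u^p+b g_u^q$ and $\int g_u^p+g_u^q$ up to the constant $\beta/\alpha$.

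\textbf{Interior balls.} For $B=B(y,R)$ with $2\lambda B\Subset\Omega$, apply the Caccioppoli-type inequality \eqref{Cacioppoli} with $k=u_{2B}$ on the pair $(B,2B)$ (a symmetric argument with $-u$ handles the part where $u<k$), obtaining
\begin{equation*}
\dashint_{B}(g_u^p+g_u^q)\,\dd\mu\le \frac{C}{R^p}\dashint_{2B}|u-u_{2B}|^p\,\dd\mu+\frac{C}{R^q}\dashint_{2B}|u-u_{2B}|^q\,\dd\mu.
\end{equation*}
Now estimate each right-hand term by the Sobolev–Poincar\'e inequality \eqref{(2.8)} with the sub-exponent: write $p=s\cdot(p/s)$ and use $\|u-u_{2B}\|_{L^p}\le\|u-u_{2B}\|_{L^{s^*}}$ (valid since $q<s^*$ and $p<q$) together with $R^{-s}(\dashint g_u^s)\le$ (by Jensen in reverse, i.e. raising to power $q/s$) a term controlled by $(\dashint_{\lambda B}(g_u+g_w))^q$ plus lower order; the point is that the exponent $s$ strictly below $p$ on the gradient side is exactly what produces a reverse Hölder inequality with a gain. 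After collecting terms and dividing, one arrives at
$\dashint_{B}g_u^q\,\dd\mu\le C\big[(\dashint_{\lambda B}(g_u+g_w+1))^{\,q/?}\big]$ in the precise form required by Lemma \ref{Gehring}, with the "$f$" there taken to be $g_w+1$.

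\textbf{Boundary balls.} For $B=B(y,R)$ with $y\in\partial\Omega$ (so $B$ meets $X\setminus\Omega$), one first replaces $u$ by $u-w\in N^{1,q}_0(\Omega\cap 2B)$-type functions extended by zero, using the boundary data hypothesis $w-u\in N^{1,q}_0(\Omega)$; this is where the fixed boundary data is essential. On $\tfrac12 B$ the function $u-w$ vanishes on the $p$-fat set $(X\setminus\Omega)\cap\tfrac12 B$, whose variational capacity ${\rm cap}_s(S,B)$ is comparable to ${\rm cap}_s(B;\lambda B)\approx R^{-s}\mu(B)$ by the fatness (self-improved so that the exponent $s<p$ is admissible) together with the capacity comparison lemma (\cite{BMS}, Lemma 2.6). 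Hence the Maz'ya-type estimate Proposition \ref{prop2.3} gives $\big(\dashint_B|u-w|^t\big)^{1/t}\le C R\,\big(\dashint_{\lambda B}g_{u-w}^s\big)^{1/s}$, i.e. a genuine Sobolev inequality with the same scaling as the interior case but now valid on a ball centered on the boundary. Combining this with \eqref{Cacioppoli} applied with $k=0$ (again up to splitting into $(u-w)_+$ and $(u-w)_-$, and absorbing $g_w$ terms) yields the same reverse Hölder inequality as on interior balls.

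\textbf{Covering and conclusion.} Since $\Omega$ is bounded, cover $\overline\Omega$ by finitely many balls of the two kinds above with a fixed dilation $\lambda$; on each the reverse Hölder inequality of Lemma \ref{Gehring} holds with uniform constant $C_G$ (uniformity across $q$ in a bounded interval is guaranteed by Remark \ref{C uniformly bound}). Gehring's lemma then produces $\delta_0>0$ and the self-improved integrability $g_u\in L^{q+\delta}_{\mathrm{loc}}$, with the local estimate. A final summation over the finite cover, together with the doubling property to compare averages over the small balls with the average over $\Omega$, upgrades the local estimate to the global one displayed in the theorem, the "$+1$" absorbing the additive lower-order contributions. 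The main obstacle is the boundary case: verifying that the $p$-fatness, after self-improvement to some $p_0<p$ and then down to the Poincar\'e exponent $s$, yields a capacity lower bound ${\rm cap}_s(S,B)\gtrsim R^{-s}\mu(B)$ strong enough to make Proposition \ref{prop2.3} give a \emph{scale-invariant} Sobolev inequality; this is precisely where one must track that all exponents stay inside the interval $(s,s^*)$ and where the lower bound $a,b\ge\alpha>0$ on the coefficients cannot be dispensed with.
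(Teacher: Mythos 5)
Your overall strategy—interior reverse H\"older via Caccioppoli, boundary reverse H\"older via Maz'ya, then Gehring and a finite cover—matches the paper's. However, there is a genuine gap in your boundary-ball argument. You propose to obtain the boundary Caccioppoli estimate by ``\eqref{Cacioppoli} applied with $k=0$'' to the function $u-w$. This does not work: inequality \eqref{Cacioppoli} (equivalently \eqref{5.4}) is proved in Lemma \ref{lem 4.1} only for radii $0<\rho<R<R(y)$, i.e.\ for balls compactly contained in $\Omega$, so it is not available on a ball meeting $X\setminus\Omega$. Moreover, even formally, applying \eqref{Cacioppoli} to the quasiminimizer $u$ with $k=0$ would produce $(u)_+$ on the right-hand side, not $|u-w|$; the De Giorgi lemma is an estimate for the quasiminimizer itself and does not transfer to $u-w$, which is not a quasiminimizer. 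To get an estimate with $|u-w|$ on the right one must go back to Definition \ref{qm} and test directly with $v=u+\eta(w-u)$, where $\eta$ is a cut-off with ${\rm supp}\,\eta\subset 2B$; this gives a term $\theta\int_{2B\cap\Omega}(ag_u^p+bg_u^q)$ with $\theta<1$ on the right, which one removes by Widman's hole-filling argument (Lemma 6.1 of \cite{G}). That step—a separate boundary Caccioppoli inequality obtained by direct testing plus hole-filling, rather than by re-using the interior De Giorgi lemma—is the missing idea; without it the Maz'ya estimate has nothing to attach to.

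Two smaller points. In the interior case the paper does not pass through the $(s^*,s)$-Poincar\'e inequality with a reverse-Jensen step (there is no ``Jensen in reverse''); it simply applies the $(p,p_0)$- and $(q,p_0)$-Poincar\'e inequalities (with $p_0<p$ from Proposition \ref{q0fat} and $p_0\ge s$ WLOG) to each of the two terms, and the gain in Gehring comes from $\sigma=q/p_0>1$. And in the capacity lower bound at the boundary one needs $p_0$-fatness (not $s$-fatness): the Maz'ya estimate is applied with exponent $p_0$, and the chain ${\rm cap}_{p_0}(S;4B)\ge{\rm cap}_{p_0}(2B\setminus\Omega;4B)\ge C_f{\rm cap}_{p_0}(2B;4B)$ requires that $u-w$ vanish $p_0$-q.e.\ on $X\setminus\Omega$, which follows from $u-w\in N^{1,q}_0(\Omega)$ and $p_0<q$.
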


\begin{proof}
	We note that, by Remark \ref{rem1poin}, a $(1,s)$-Poincar\'{e} inequality with $1<s<p<q<s^*$ implies both a $(p,s)$ and a $(q,s)$-Poincar\'{e} inequalities. Moreover, we remark that  $X\setminus \Omega$ is uniformly $p$-fat. By Proposition \ref{q0fat}, then $X\setminus \Omega$ is also uniformly $p_0$-fat for some $p_0<p$.
	Without loss of generality, we can assume $p_0\geq s$. Indeed, if $p_0<s$ we can choose a bigger $p_0$ so that $p_0=s$. Thus, the H\"older inequality implies $(p, p_0)$ and $(q,p_0)$-Poincar\'e inequalities. Let $B_0$ be a ball in $X$ such that $\overline{\Omega} \subset  B_0 \subset 2B_0$. Now we consider a ball $B = B(x_0, r)$ with fixed radius $r > 0$ satisfying $2\lambda B \subset 2B_0$, where $\lambda$ is the dilation factor in Definition \ref{PI}.
	
	First,	let $2\lambda B \subset \Omega$. By \eqref{Cacioppoli} with $\alpha= u_{2B}$, \eqref{doubling}, $(p, p_0)$ and $(q, p_0)$-Poincar\'{e} inequalities we obtain
	\begin{align*}
		\alpha\dashint_B (g_u^p +  g_u^q)\, \dd\mu &\leq\dashint_B (a g_u^p + b g_u^q)\, \dd\mu \\&\leq C \left( \frac{1}{r^{p}} \dashint_{2B} a|u-u_{2B}|^p \,\dd \mu+\frac{1}{r^{q}} \dashint_{2B}b|u-u_{2B}|^q \,\dd \mu \right)\nonumber \\
		&\leq C \left( \frac{1}{r^{p}} \dashint_{2B} \beta |u-u_{2B}|^p \,\dd \mu+\frac{1}{r^{q}} \dashint_{2B}\beta |u-u_{2B}|^q \,\dd \mu \right)\nonumber \\
		&= C \left( \frac{1}{r^{p}} \dashint_{2B}  |u-u_{2B}|^p \,\dd \mu+\frac{1}{r^{q}} \dashint_{2B} |u-u_{2B}|^q \,\dd \mu \right)\nonumber \\
		&\leq C \left(\left(\dashint_{2 \lambda B} g_u^{p_0}  \,\dd \mu\right)^{\frac{p}{p_0}}+\left(\dashint_{2\lambda B}g_u^{p_0} \,\dd \mu \right)^{\frac{q}{p_0}}\right),
	\end{align*}
	Therefore,
	\begin{align*}
		\dashint_B (g_u^p +  g_u^q)\, \dd\mu 
		&\leq C \left(\left(\dashint_{2 \lambda B} g_u^{p_0} \,\dd \mu\right)^{\frac{p}{p_0}}+\left(\dashint_{2\lambda B}g_u^{p_0} \,\dd \mu \right)^{\frac{q}{p_0}}\right),
	\end{align*}
	where $C$ depends on $q,\alpha, \beta, K, C_d$ and $C_{PI}$.
	
	\noindent We recall Young's inequality $|z|^q \leq |z|^p+|z|^q\leq C(1+|z|^q),$ with $C=\frac{p+q}{q}< 2$.
	Thus, 
	\begin{align}\label{3.2}
		\dashint_B g_u^q\, \dd\mu 
		&\leq C \left(1+\left(\dashint_{2 \lambda B} g_u^{p_0} \,\dd \mu\right)^{\frac{q}{p_0}}\right),
	\end{align}
	where $C$ depends on $q,\alpha, \beta, K, C_d$ and $C_{PI}$.
	
	Secondly, let $2\lambda B \setminus \Omega\neq \emptyset$. We consider a Lipschitz cut-off function such that
	$0\leq\eta \leq 1$, $\eta=1$ on $B$, ${\rm supp} \, \eta\subset 2B$ and $g_{\eta}\leq \frac{C}{r}$. Therefore, $\eta (u-w) \in  N^{1,q}_0 (2B \cap\Omega)$ and so we can plug it in \eqref{min}. We get
	\begin{equation*}
		\int_{2B \cap \Omega}(a g_u^p + b g_u^q) \, \dd\mu \leq K\int_{2B \cap \Omega} (a g_v^p + b g_v^q) \, \dd\mu,
	\end{equation*}
	where $v =u+\eta(w-u)$ and $g_v\leq (1-\eta)\,g_u+\eta \,g_w+|u-w|g_{\eta}$. Thus, we have
	\begin{align*}
		\int_{B \cap \Omega} (a g_u^p +b g_u^q) \, \dd\mu &\leq C \int_{(2B\setminus B)\cap \Omega}a(1-\eta)^p\,g_u^p  \, \dd\mu + C \int_{2B \cap \Omega} a |u-w|^pg_{\eta}^p \, \dd\mu \\&\quad+ C \int_{2B \cap \Omega}a\eta^p \,g_w^p \, \dd\mu + C \int_{(2B\setminus B)\cap \Omega}b(1-\eta)^q\,g_u^q  \, \dd\mu \\&\quad+ C \int_{2B \cap \Omega}b |u-w|^qg_{\eta}^q \, \dd\mu + C \int_{2B \cap \Omega}b\eta^q \,g_w^q \, \dd\mu.
	\end{align*}
	Now, we add $C	\int_{B \cap \Omega} (ag_u^p +bg_u^q) \, \dd\mu $ to both sides of the previous inequality and then we divide by $(1 + C)$, obtaining
	\begin{align*}
		\int_{B \cap \Omega} (ag_u^p +bg_u^q) \, \dd\mu &\leq \theta \int_{2B\cap \Omega}(a g_u^p + b g_u^q) \, \dd\mu+ \frac{\theta}{r^p} \int_{2B \cap \Omega} a|u-w|^p \,\dd\mu \\&\quad+  \frac{\theta}{r^q} \int_{2B \cap \Omega}b |u-w|^q \, \dd\mu + \theta \int_{2B\cap \Omega}(ag_w^p +bg_w^q) \, \dd\mu,
	\end{align*}
	where $\theta=\frac{C}{1+C}<1$.
	
	By Lemma 6.1 of \cite{G}, we get
	\begin{align*}
		\int_{B \cap \Omega} (a g_u^p +b g_u^q) \, \dd\mu &\leq  \frac{C}{r^p} \int_{2B \cap \Omega} a|u-w|^p \, \dd\mu +  \frac{C}{r^q} \int_{2B \cap \Omega}b |u-w|^q \, \dd\mu \nonumber \\&\quad+ C \int_{2B\cap \Omega}(a g_w^p +b g_w^q) \, \dd\mu.
	\end{align*}
	Notice that the last inequality can be rewritten as 
	\begin{align}\label{3.3}
		\int_{B \cap \Omega} (g_u^p +g_u^q) \, \dd\mu &\leq  \frac{C}{r^p} \int_{2B \cap \Omega} |u-w|^p \, \dd\mu +  \frac{C}{r^q} \int_{2B \cap \Omega} |u-w|^q \, \dd\mu \nonumber \\&\quad+ C \int_{2B\cap \Omega}( g_w^p + g_w^q) \, \dd\mu,
	\end{align} where $C$ depends on $q,\alpha, \beta$ and $K$. From now on, for the integrals on the right-hand side of the previous inequality we will work on the bigger ball $4B$. By Proposition \ref{prop2.3} with $s=p_0$ and the doubling condition, for $l=p,q$, we deduce
	\begin{align*}
		\left(\frac{C}{r^l}\dashint_{4B}|u-w|^l\, \dd\mu \right)^{\frac{1}{l}}&\leq \frac{C}{r}\left(\frac{1}{\textrm{cap}_{p_0}(S;4B)}\int_{4\lambda B}g_{u-w}^{p_0}\, \dd\mu \right)^{\frac{1}{p_0}}\\&\leq C\left(\frac{\mu(2B)r^{-p_0}}{\textrm{cap}_{p_0}(S;4B)}\dashint_{4\lambda B}g_{u-w}^{p_0}\, \dd\mu \right)^{\frac{1}{p_0}},
	\end{align*} 
	where $S=\lbrace x\in 2B: u(x)=w(x)\rbrace$. We note that $u-w=0$ $q$-q.e., therefore $u-w=0$ $p_0$-q.e. in $X\setminus\Omega$. Also, since $X\setminus\Omega$ is uniformly $p_0$-fat, we obtain the following chain of inequalities
	$$
	\textrm{cap}_{p_0}(S;4B)\geq\textrm{cap}_{p_0}(2B\setminus\Omega; 4B)\geq C_f\ \textrm{cap}_{p_0}(2B;4B)\geq C\mu(2B)r^{-p_0}.$$ Since $u-w=0$ $q$-q.e, then $\mu$-a.e. in $X \setminus\Omega$ and so $g_{u-w}=0$ $\mu$-a.e. in $X \setminus\Omega$. Thus, for $l=p,q$,
	\begin{align}\label{3.4}
		\left(\frac{C}{r^l}\frac{1}{\mu(4B)}\int_{4B\cap \Omega}|u-w|^l\, \dd\mu \right)^{\frac{1}{l}}&=\left(\frac{C}{r^l}\int_{4B\cap \Omega}|u-w|^l\, \dd\mu \right)^{\frac{1}{l}}\nonumber\\&\leq C\left(\dashint_{4\lambda B}g_{u-w}^{p_0}\, \dd\mu \right)^{\frac{1}{p_0}}\nonumber\\&= C\left(\frac{1}{\mu(4\lambda B)}\int_{4\lambda B\cap\Omega}g_{u-w}^{p_0}\, \dd\mu \right)^{\frac{1}{p_0}}\nonumber \\
		&\leq C\left(\frac{1}{\mu(4B)}\int_{4\lambda B}g_{u}^{p_0}\, \dd\mu \right)^{\frac{1}{p_0}}+C\left(\frac{1}{\mu(4B)}\int_{4\lambda B}g_{w}^{p_0}\, \dd\mu \right)^{\frac{1}{p_0}}.
	\end{align}
	Now, H\"{o}lder inequality implies
	\begin{align}\label{3.5}
		\left(\frac{1}{\mu(4\lambda B)}\int_{4\lambda B \cap \Omega}g_{w}^{p_0}\, \dd\mu \right)^{\frac{1}{p_0}}&\leq 		\left(\frac{1}{\mu(4\lambda B)}\int_{4\lambda B \cap \Omega}g_{w}^l\, \dd\mu \right)^{\frac{1}{l}},
	\end{align} for $l=p,q$.
	Using \eqref{3.3}, \eqref{3.4} and \eqref{3.5}, we have
	\begin{align*}
		\frac{1}{\mu(B)}	\int_{B \cap \Omega} (g_u^p +g_u^q) \, \dd\mu \leq &C \Bigg(\frac{1}{\mu(4\lambda B)}\int_{4\lambda B\cap \Omega}(g_w^p + g_w^q)\,\dd\mu+ \left(\frac{1}{\mu(4\lambda B)}\int_{4\lambda B \cap \Omega}g_{u}^{p_0}\, \dd\mu \right)^{\frac{p}{p_0}}  \nonumber \\ &+\left(\frac{1}{\mu(4\lambda B)}\int_{4\lambda B \cap \Omega}g_{u}^{p_0}\, \dd\mu \right)^{\frac{q}{p_0}} +\frac{1}{\mu(4 B)}\int_{4B\cap \Omega}(g_w^p + g_w^q)\, \dd\mu\Bigg).
	\end{align*}
	By the doubling property, we get
	\begin{align}\label{3.6old}
	\frac{1}{\mu(B)}	\int_{B \cap \Omega} (g_u^p +g_u^q) \, \dd\mu \leq &C \Bigg(\frac{1}{\mu(4\lambda B)}\int_{4\lambda B\cap \Omega}(g_w^p + g_w^q)\,\dd\mu+ \left(\frac{1}{\mu(4\lambda B)}\int_{4\lambda B \cap \Omega}g_{u}^{p_0}\, \dd\mu \right)^{\frac{p}{p_0}}  \nonumber \\ &+\left(\frac{1}{\mu(4\lambda B)}\int_{4\lambda B \cap \Omega}g_{u}^{p_0}\, \dd\mu \right)^{\frac{q}{p_0}}\Bigg).
	\end{align}
	where $C$ depends on $q,\alpha, \beta, K, C_d$ and $C_f$.

\noindent By Young's inequality, we deduce
\begin{align*}
\frac{1}{\mu(4\lambda B)}\int_{4\lambda B\cap \Omega}(g_w^p + g_w^q)\, \dd\mu 
&\leq \frac{2}{\mu(4\lambda B)}\int_{4\lambda B\cap \Omega}(1+ g_w^q)\, \dd\mu\\
&= 2 \left(\dfrac{\mu(4\lambda B\cap \Omega)}{\mu(4\lambda B)} + \dfrac{1}{\mu(4\lambda B)}\int_{4\lambda B\cap \Omega}g_w^q\, \dd\mu \right)\\
&\leq 2 \left(1+ \dfrac{1}{\mu(4\lambda B)}\int_{4\lambda B\cap \Omega}g_w^q\, \dd\mu \right).
\end{align*}
Analogously, we get
\begin{align*}
 \left(\frac{1}{\mu(4\lambda B)}\int_{4\lambda B \cap \Omega}g_{u}^{p_0}\, \dd\mu \right)^{\frac{p}{p_0}} +\left(\frac{1}{\mu(4\lambda B)}\int_{4\lambda B \cap \Omega}g_{u}^{p_0}\, \dd\mu \right)^{\frac{q}{p_0}}\leq 2 \left(1+ \left(\frac{1}{\mu(4\lambda B)}\int_{4\lambda B \cap \Omega}g_{u}^{p_0}\, \dd\mu \right)^{\frac{q}{p_0}} \right).
\end{align*}
By \eqref{3.6old}, we obtain
\begin{align*}
	\frac{1}{\mu(B)}\int_{B \cap \Omega} (g_u^p +g_u^q) \, \dd\mu&\leq C \Bigg(1+ \dfrac{1}{\mu(4\lambda B)}\int_{4\lambda B\cap \Omega}g_w^q\, \dd\mu+\left(\frac{1}{\mu(4\lambda B)}\int_{4\lambda B \cap \Omega}g_{u}^{p_0}\, \dd\mu \right)^{\frac{q}{p_0}}\Bigg)\\
	&= C  \Bigg(\dashint_{4\lambda B}(g_w^q \,\chi_{4\lambda B\cap \Omega} +1) \,\dd\mu+ \left(\frac{1}{\mu(4\lambda B)}\int_{4\lambda B \cap \Omega}g_{u}^{p_0}\, \dd\mu \right)^{\frac{q}{p_0}}\Bigg).
\end{align*}
Thus,
\begin{align*}
	\frac{1}{\mu(B)}	\int_{B \cap \Omega} g_u^q \, \dd\mu
& \leq C  \left(\dashint_{4\lambda B}(g_w^q \,\chi_{4\lambda B\cap \Omega} +1) \,\dd\mu+ \left(\frac{1}{\mu(4\lambda B)}\int_{4\lambda B \cap \Omega}g_{u}^{p_0}\, \dd\mu \right)^{\frac{q}{p_0}}\right)\\
&\leq C  \left(\dashint_{4\lambda B}\left(g_w^{p_0} \,\chi_{4\lambda B\cap \Omega} +1\right)^{\frac{q}{p_0}} \,\dd\mu+ \left(\frac{1}{\mu(4\lambda B)}\int_{4\lambda B \cap \Omega}g_{u}^{p_0}\, \dd\mu \right)^{\frac{q}{p_0}}\right).
\end{align*}
And so,
\begin{align}\label{3.6}
	\frac{1}{\mu(B)}	\int_{B \cap \Omega} g_u^q \, \dd\mu\leq C  \Bigg(&\dashint_{4\lambda B}\left(g_w^{p_0} \,\chi_{4\lambda B\cap \Omega} +1\right)^{\frac{q}{p_0}}\, \dd\mu+\left(\frac{1}{\mu(4\lambda B)}\int_{4\lambda B \cap \Omega}g_{u}^{p_0}\, \dd\mu \right)^{\frac{q}{p_0}}\Bigg).
\end{align}
We define $\sigma=\frac{q}{p_0}>1$,
	\begin{equation*}
		g=\begin{cases}
			g_u^{p_0} \quad\mbox{in $\Omega$},\\
			0 \quad\mbox{otherwise},
		\end{cases}
	\end{equation*}
	and
	\begin{equation*}
		f=\begin{cases}
			g_w^{p_0} \,\chi_{4\lambda B\cap \Omega} +1 \quad\mbox{in $\Omega$},\\
			0 \hspace{2.54cm}\mbox{otherwise}.
		\end{cases}
	\end{equation*}
If $4 \lambda B \subset 2B_0$, then by \eqref{3.2} and \eqref{3.6}, the following reverse H\"{o}lder type inequality holds
	\begin{align*}
		\dashint_{B} g^{\sigma}\,\dd\mu\leq&C \left(\left(\dashint_{4\lambda B}g\,\dd\mu\right)^{\sigma}+ 	\dashint_{4\lambda B}f^{\sigma} \,\dd\mu\right),
	\end{align*}
where $C$ depends on $q,\alpha, \beta, K, C_d, C_f$ and $C_{PI}$.	
By applying Gehring Lemma \ref{Gehring}, we obtain the inequality
	\begin{equation*}
		\left(	\dashint_{B} g^{\tilde{s}}\,\dd\mu\right)^{\frac{1}{\tilde{s}}} \leq C \left(\left(	\dashint_{4\lambda B}g^{\sigma} \,\dd\mu\right)^{\frac{1}{\sigma}}+ \left(	\dashint_{4\lambda B} f^{\tilde{s}} \,\dd\mu\right)^{\frac{1}{\tilde{s}}} \right),
	\end{equation*}
	for $\tilde{s}\in [\sigma,\sigma+\epsilon_0[$, with $C$ and $\epsilon_0$ both depending on $p,q, \alpha, \beta, K, \lambda, C_d,C_f$ and $C_{PI}$.	
	That is,
	\begin{align*}
		\left(	\dashint_{B} g_u^{p_0\tilde{s}}\,\dd\mu\right)^{\frac{1}{\tilde{s}}}& 
		\leq C \left(\left(\dashint_{4\lambda B}g_u^{q} \,\dd\mu\right)^{\frac{p_0}{q}}
		+ \left(\dashint_{4\lambda B} 	\left(g_w^{p_0} \,\chi_{4\lambda B\cap \Omega} +1\right)^{\tilde{s}} \,\dd\mu\right)^{\frac{1}{\tilde{s}}} \right)\\
		& \leq C \left(\left(	\dashint_{4\lambda B}g_u^{q} \,\dd\mu\right)^{\frac{p_0}{q}}+ \left(	\dashint_{4\lambda B} 2^{\tilde{s}-1}	\left(g_w^{p_0\tilde{s}} \,\chi_{4\lambda B\cap \Omega} +1\right)\,\dd\mu\right)^{\frac{1}{\tilde{s}}} \right)\\
		& \leq C \left(\left(	\dashint_{4\lambda B}g_u^{q} \,\dd\mu\right)^{\frac{p_0}{q}}+ \left(	\dashint_{4\lambda B}\left(g_w^{p_0\tilde{s}} \,\chi_{4\lambda B\cap \Omega} +1\right)\,\dd\mu\right)^{\frac{1}{\tilde{s}}} \right)\\
			& = C \left(\left(	\dashint_{4\lambda B}g_u^{q} \,\dd\mu\right)^{\frac{p_0}{q}}+ \left(	\dashint_{4\lambda B}g_w^{p_0\tilde{s}} \,\chi_{4\lambda B\cap \Omega} \,\dd\mu +1\right)^{\frac{1}{\tilde{s}}} \right)\\
				& \leq C \left(\left(	\dashint_{4\lambda B}g_u^{q} \,\dd\mu\right)^{\frac{p_0}{q}}+ \left(	\dashint_{4\lambda B}g_w^{p_0\tilde{s}} \,\chi_{4\lambda B\cap \Omega}\, \dd\mu \right)^{\frac{1}{\tilde{s}}}+1 \right)\\
				& \leq C \left(\left(	\dashint_{4\lambda B}g_u^{q} \,\dd\mu\right)^{\frac{p_0}{q}}+ \left(	\dfrac{1}{\mu(4\lambda B)}\int_{4\lambda B \cap \Omega}g_w^{p_0\tilde{s}} \,\dd\mu \right)^{\frac{1}{\tilde{s}}}+1 \right).
	\end{align*}
	where $q \leq q+\delta_0=p_0\tilde{s} \leq q+\epsilon_0p_0$, with $\delta_0=\epsilon_0p_0$, where $\delta_0$ has the same dependencies as $\epsilon_0$, and $\delta\in [0, \delta_0]$.
	Therefore, we get 
	\begin{align}\label{(3.7)1}
		\left(	\dashint_{B} g_u^{q+\delta} \,\dd\mu\right)^{\frac{1}{q+\delta}} &\leq C \left(\left(	\dashint_{4\lambda B}g_u^{q} \,\dd\mu\right)^{\frac{1}{q}}+ \left(	\dfrac{1}{\mu(4\lambda B)}\int_{4\lambda B \cap \Omega} g_w^{q+\delta} \,\dd\mu\right)^{\frac{1}{q+\delta}} +1\right)\nonumber\\
		& \leq C \left(\left(	\dashint_{4\lambda B}g_u^{q} \,\dd\mu\right)^{\frac{1}{q}}+ \left(		\dashint_{4\lambda B} g_w^{q+\delta} \,\dd\mu\right)^{\frac{1}{q+\delta}} +1\right),
	\end{align} for all $\delta \in [0,\delta_0]$.

\noindent We recall that $\Omega$ is bounded and its diameter is finite, so we can cover it with a finite number of balls $B(x_j, r_j)$, $j = 1, 2, ...,N$, that is
\begin{equation*}
    B(x_j, 2\lambda r_j)\subset B_0\quad\textrm{and}\quad\Omega \subset \bigcup_{j=1}^NB(x_j, r_j),  
\end{equation*}
	for a fixed $\lambda$. Now, we multiply \eqref{(3.7)1} by $\mu(4\lambda B)^{\frac{1}{q+\delta}}$, we sum over $B(x_j, r_j)$ and divide by $\mu(\Omega)$, in order to complete the proof. We note that this last step may slightly modify the constant $C$, however this will only depend on $C_d$ and $\Omega$ itself. Finally, we point out that if $\alpha$, $\beta$, $K$, $\lambda$, $C_d$, $C_f$ and $C_{PI}$ are fixed, then $C$ and $\delta_0$ depend essentially only on $p$ and $q$.
\end{proof}

\section{Stability property}\label{Sec4}
In this section, we prove a stability result, with respect to the varying exponents $p_i,q_i$, for a family of quasiminimizers. We show that, under suitable assumptions, a sequence of $(p_i,q_i)$-quasiminimizers converge to a $(p,q)$-quasiminimizer of the limit problem. The global higher integrability in Theorem \ref{Theorem 5.2} serves as a starting point for the proof of the stability. Before stating our main theorem, we premise the following remark.
\begin{remark}\label{things}Under the assumptions of the Theorem \ref{Theorem 5.2}, let $(p_i,q_i)$ be a sequence such that  $\lim_{i\rightarrow\infty}p_i=p$ and $\lim_{i\rightarrow\infty}q_i=q$. Without loss of generality we can assume that $1<s<p_i<q_i<s^*$ for every $i\in \mathbb{N}$.
By Theorem  \ref{Theorem 5.2}, for every duple $(p_i,q_i)$ there exists $\delta_i$ such that if $u_i\in N^{1,q_i}(\Omega)$ is a $(p_i, q_i)$-quasiminimizer, then the minimal $q_i$-weak upper gradient $g_{u_i}$ belongs to the space $L^{q_i+\delta_i}(\Omega)$ and the following inequality is satisfied

\begin{equation}\label{primera}
	\left(\dashint_{\Omega} g_{u_i}^{q_i+\delta_i}\,\dd\mu\right)^{\frac{1}{q_i+\delta_i}} \leq C_i \left(\left(	\dashint_{\Omega}g_{u_i}^{q_i} \,\dd\mu\right)^{\frac{1}{q_i}}+ \left(	\dashint_{\Omega} g_w^{q_i+\delta_i} \,\dd\mu\right)^{\frac{1}{q_i+\delta_i}}+1 \right),
\end{equation} 
where $\delta_i$ and $C_i$ depend on $p_i$ and $q_i$. Since $1<s<p_i<q_i<s^*$, we can find $C$, depending on $p$ and $q$, such that $C_i\leq C$. Moreover, in \eqref{primera}, $\delta_i$ is inverse proportional to $C_i$, therefore there exists $\delta_0$, with the same dependencies as $C$, such that $\delta_i\geq\delta_0$. For more details, we refer the reader to \cite{ZG}.
\end{remark}

The main theorem of this section is the following stability result. Unlike the Euclidean case, we need to assume a priori that the sequence of $(p_i,q_i)$-quasiminimizers converges to a certain $u$, because when working with quasiminimizers, instead of minimizers, we loose the uniqueness of the limit. 

\begin{theorem}\label{Theorem 1.1.} Let $w \in N^{1,\bar{q}}(\Omega)$ for some $\bar{q} > q$. 
Assume $p = \lim_{i\to \infty} p_i$, $q = \lim_{i\to \infty} q_i$, with $p_i,q_i\in ]s,s^*[$ and $p_i<q_i$. Let $(u_i)$ a sequence of $(p_i,q_i)$-quasiminimizers, where $u_i\in N^{1,q_i}(\Omega)$ with the same boundary data $w$ and equal quasiminimizing constant $K$. If 
$u_i\to u$ $\mu$-a.e. in $\Omega$, then $u\in N^{1,q}(\Omega)$ is a $(p,q)$-quasiminimizer with boundary data $w$.
\end{theorem}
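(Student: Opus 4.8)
The plan is to split the argument into three parts: (i) extract from the higher integrability estimate a uniform bound for the sequence $(u_i)$ in a fixed Newtonian space and identify the limit gradient; (ii) show that $u-w\in N^{1,q}_0(\Omega)$, i.e. the limit keeps the boundary data; (iii) pass to the limit in the quasiminimizing inequality \eqref{min}. For part (i), since all the exponents $q_i$ lie in the fixed interval $(s,s^*)$, I would first test \eqref{min} with the competitor $v=w$ (which is admissible since $u_i-w\in N^{1,q_i}_0(\Omega)$), giving
\begin{align*}
\int_\Omega(ag_{u_i}^{p_i}+bg_{u_i}^{q_i})\,\dd\mu\leq K\int_\Omega(ag_w^{p_i}+bg_w^{q_i})\,\dd\mu\leq K\beta\int_\Omega(g_w^{p_i}+g_w^{q_i})\,\dd\mu,
\end{align*}
and since $w\in N^{1,\bar q}(\Omega)$ with $\bar q>q\geq q_i$ (for $i$ large) and $\Omega$ is bounded, the right-hand side is bounded uniformly in $i$ by H\"older. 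This gives $\int_\Omega g_{u_i}^{q_i}\,\dd\mu\leq M$ uniformly, hence by H\"older $\int_\Omega g_{u_i}^{s}\,\dd\mu\leq M'$ uniformly. Then Remark \ref{things} upgrades this: there are $\delta_0>0$ and $C$, independent of $i$, with $\int_\Omega g_{u_i}^{q+\delta_0}\,\dd\mu$ uniformly bounded (using $q_i+\delta_i\geq q+\delta_0$ for $i$ large after possibly shrinking $\delta_0$, and controlling the $g_w$ term again by $\bar q>q$). So $(g_{u_i})$ is bounded in $L^{q+\delta_0}(\Omega)$, hence $(u_i)$ is bounded in $N^{1,q+\delta_0/2}(\Omega)$ once one also controls $\|u_i\|_{L^{q}(\Omega)}$ — which follows from the Poincar\'e/Sobolev inequality for functions with fixed boundary data $w$ (write $u_i=w+(u_i-w)$ with $u_i-w\in N^{1,q}_0$ and apply Remark \ref{Lemma 2.4EMThesis} type estimates together with Proposition \ref{hardyineq}). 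By reflexivity of $L^{q+\delta_0}(\Omega)$, a subsequence of $(g_{u_i})$ converges weakly in $L^{q+\delta_0}(\Omega)$ to some $g\in L^{q+\delta_0}(\Omega)$; combined with the $\mu$-a.e. convergence $u_i\to u$, a standard Mazur/Fuglede argument (as in \cite{BB}) shows $u\in N^{1,q+\delta_0}(\Omega)\subset N^{1,q}(\Omega)$ and $g_u\leq g$ $\mu$-a.e., so in particular $g_u\in L^{q}(\Omega)$ and $\liminf_i\int_\Omega g_{u_i}^{q_i}\,\dd\mu\geq\int_\Omega g_u^q\,\dd\mu$ by weak lower semicontinuity (using $q_i\to q$).

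For part (ii), the key input is Lemma \ref{compactnessr}: the sequence $u_i-w$ lies in $N^{1,q-\epsilon}_0(\Omega)$ for every small $\epsilon>0$ and $i$ large (since $u_i-w\in N^{1,q_i}_0(\Omega)$ and $q_i\to q$), is bounded there uniformly in $i$ by part (i), and converges $\mu$-a.e. to $u-w$; hence $u-w\in N^{1,q-\epsilon}_0(\Omega)$ for every $\epsilon>0$. Since also $u-w\in N^{1,q}(\Omega)$ from part (i), Proposition \ref{iintersection} gives $u-w\in N^{1,q}_0(\Omega)$, which is exactly the statement that $u$ has boundary data $w$. Here the uniform $q$-fatness of $X\setminus\Omega$ is essential.

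For part (iii), fix an arbitrary competitor $v\in N^{1,q}(\Omega)$ with $u-v\in N^{1,q}_0(\Omega)$; I must produce admissible competitors $v_i$ for the $(p_i,q_i)$-problem that converge to $v$ well enough. The natural choice is $v_i=u_i-(u-v)$, so that $u_i-v_i=u-v\in N^{1,q}_0(\Omega)\subset N^{1,q_i}_0(\Omega)$ for $i$ large; then $g_{v_i}\leq g_{u_i}+g_{u-v}$, and $g_{u-v}\in L^q(\Omega)$, so the family $(g_{v_i})$ is bounded in $L^q(\Omega)$ and in fact $g_{v_i}\to g_u+g_{u-v}$-dominated quantities in a controlled way. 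Plugging $v_i$ into \eqref{min} and using $0<\alpha\le a,b\le\beta$,
\begin{align*}
\alpha\int_\Omega(g_{u_i}^{p_i}+g_{u_i}^{q_i})\,\dd\mu\leq\int_\Omega(ag_{u_i}^{p_i}+bg_{u_i}^{q_i})\,\dd\mu\leq K\int_\Omega(ag_{v_i}^{p_i}+bg_{v_i}^{q_i})\,\dd\mu.
\end{align*}
On the left I pass to the $\liminf$ using weak lower semicontinuity and $p_i\to p$, $q_i\to q$ (as in part (i), noting $g_{u_i}^{p_i}$ is also bounded in a space slightly better than $L^p$). On the right I pass to the limit using the uniform higher-integrability bound on $g_{v_i}$ in $L^{q+\delta_0}$ (inherited from $g_{u_i}$ and $g_{u-v}\in L^q$ — actually here one only needs $g_{v_i}^{q_i}$ uniformly integrable, which follows since $g_{u_i}\in L^{q+\delta_0}$ uniformly and $q_i\le q+\delta_0/2$ eventually), together with $g_{v_i}\to g_v$ in measure and $a g_{v_i}^{p_i}+b g_{v_i}^{q_i}\to a g_v^p+b g_v^q$ $\mu$-a.e.; a Vitali convergence theorem then gives convergence of the integrals. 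The outcome is
\begin{align*}
\int_\Omega(ag_u^p+bg_u^q)\,\dd\mu\leq K\int_\Omega(ag_v^p+bg_v^q)\,\dd\mu,
\end{align*}
which is \eqref{min} with $\Omega$ and the limit exponents, so $u$ is a (global) $(p,q)$-quasiminimizer with boundary data $w$. The main obstacle is the uniform integrability needed to pass to the limit on the right-hand side of \eqref{min} while the exponents themselves are moving — this is precisely where the global higher integrability of Theorem \ref{Theorem 5.2} (via Remark \ref{things}) is indispensable, since it gives the extra room $g_{u_i}\in L^{q+\delta_0}$ with a uniform bound; one must be careful that the competitor's gradient $g_{v_i}$ inherits enough integrability, and that the convergence $a g_{v_i}^{p_i}+b g_{v_i}^{q_i}\to a g_v^p+b g_v^q$ a.e. can be justified (pointwise a.e. convergence of $g_{v_i}$, which for the specific competitor $v_i=u_i-(u-v)$ one obtains only along a further subsequence, extracted after the weak-$L^{q+\delta_0}$ subsequence, via a.e. convergence of $g_{u_i}$ — or alternatively by choosing $v_i$ more cleverly). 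A secondary subtlety is matching the a priori hypothesis $u_i\to u$ $\mu$-a.e. with the subsequence extractions: since the limit $u$ is given in advance, every subsequence has the same a.e. limit, so all the "pass to a subsequence" steps are harmless.
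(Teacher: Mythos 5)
Your parts (i) and (ii) match the paper's Lemma \ref{Lemma4.2} and Lemma \ref{Lemma12.13JKbook} quite closely (same use of Theorem \ref{Theorem 5.2}/Remark \ref{things}, Rellich--Kondrachov, weak $L^{q+\epsilon_0}$ compactness, Lemma \ref{compactnessr}, and Proposition \ref{iintersection}). Part (iii) is where the proposal breaks.

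The choice $v_i=u_i-(u-v)$ has two problems. First, admissibility: for $i$ with $q_i>q$, $u-v\in N^{1,q}_0(\Omega)$ need not lie in $N^{1,q_i}_0(\Omega)$, so $v_i$ is not an allowed competitor in \eqref{min} for the $(p_i,q_i)$-problem; the inclusion $N^{1,q}_0(\Omega)\subset N^{1,q_i}_0(\Omega)$ only goes in the direction $q_i\le q$. Second, and more fundamentally, the right-hand side of \eqref{min} for the competitor $v_i$ does not converge to $\int_\Omega(ag_v^p+bg_v^q)\,\dd\mu$. You only control $g_{v_i}\le g_{u_i}+g_{u-v}$, and the sequence $(g_{u_i})$ converges merely weakly in $L^{q+\epsilon_0}$ to a $q$-weak upper gradient $g$ of $u$ which may be strictly larger than $g_u$; the paper explicitly remarks after Lemma \ref{Lemma4.2} that it is not even known whether $g_{u_i}\rightharpoonup g_u$. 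In particular you cannot extract pointwise a.e.\ convergence of $g_{u_i}$ from weak convergence, so the Vitali argument you invoke has no a.e.\ limit to apply to, and there is no reason $\limsup_i\int(ag_{v_i}^{p_i}+bg_{v_i}^{q_i})\,\dd\mu\le\int(ag_v^p+bg_v^q)\,\dd\mu$. You flag this as a ``secondary subtlety'', but it is in fact the heart of the matter.

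The paper resolves this by not letting the competitor's gradient depend on $i$ in the interior. With a Lipschitz cut-off $\eta$ and $\phi_i=\phi+\eta(u-u_i)$ (for $\phi\in\mathrm{Lip}_C(\Omega')$), one has $u_i+\phi_i=u+\phi$ on a neighbourhood of $\overline{\Omega'}$, so the competitor's gradient there is exactly $g_{u+\phi}$, independent of $i$; only the annular error $\int_{\Omega''\setminus\overline{\Omega'}}(ag_{u_i+\phi_i}^{p_i}+bg_{u_i+\phi_i}^{q_i})\,\dd\mu$ needs control. That control is precisely what the local uniform integrability estimate (Lemma \ref{Lemma4.4}) provides, together with the lower semicontinuity of Lemma \ref{Lemma12.15JKbook} for the left-hand side. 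Your proposal skips Lemma \ref{Lemma4.4} entirely, which is exactly the missing ingredient: you would need to re-introduce it (or an equivalent) and switch to the cut-off construction to close the argument.
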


We note that functions $u_i$ are supposed to be not equal to the boundary data $w$, i.e. we assume that there is a set of positive measure where $u_i \neq w$ $\mu$-a.e., otherwise the result is trivial. Since the proof of Theorem \ref{Theorem 1.1.} is considerably long,  we have divided it  into several lemmas, which are of independent interest.

\begin{lemma}\label{Lemma4.2}
Let $u_i$ and $u$ be as in Theorem \ref{Theorem 1.1.}. Then there exists $\epsilon_0>0$ such that $u,u_i\in L^{q+\epsilon_0}(\Omega)$, $g_{u_i}, g\in  L^{q+\epsilon_0}(\Omega)$ and there is a subsequence such that $u_i\rightarrow u$ in $L^{q+\epsilon_0}(\Omega)$, $g_{u_i}\rightarrow g$ in $L^{q+\epsilon_0}(\Omega)$, where $g$ is a $q$-weak upper gradient of $u$. Here, $\epsilon_0$ depends on $p$ and $q$.
\end{lemma}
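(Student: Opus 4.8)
The plan is to bound the whole sequence $(u_i)$ uniformly in a Newtonian space with a \emph{fixed} exponent strictly larger than $q$, and then pass to the limit by soft compactness. First, for the uniform energy bound: since $u_i-w\in N^{1,q_i}_0(\Omega)$, the function $w$ itself is an admissible competitor in the $(p_i,q_i)$-quasiminimizing inequality for $u_i$, and using $0<\alpha\le a,b\le\beta$, the elementary pointwise inequality $g_w^{p_i}\le 1+g_w^{q_i}$, the bound $q_i+\delta_i<\bar q$ together with $g_w\in L^{\bar q}(\Omega)$, and $\mu(\Omega)<\infty$, one gets $\int_\Omega g_{u_i}^{q_i}\,\dd\mu\le C$ with $C$ independent of $i$. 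A Sobolev--Poincar\'e inequality for functions with zero boundary values---from the Maz'ya type estimate of Proposition \ref{prop2.3} applied to $u_i-w$ with the fixed exponent $p_0\le s$ and $S\supset B_0\setminus\Omega$ as in the proof of Theorem \ref{Theorem 5.2}, the uniform $p_0$-fatness of $X\setminus\Omega$ established there, and the uniform-in-exponent constant of Remark \ref{C uniformly bound}---then gives $\|u_i-w\|_{L^{q_i}(\Omega)}\le C$, hence $\|u_i\|_{L^{q_i}(\Omega)}\le C$ uniformly.

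Next, I would feed these bounds into the global higher integrability. By Remark \ref{things} there are $\delta_0>0$ and a constant, both depending only on $p,q$ and the structural data, with $\delta_i\ge\delta_0$ and with the constant $C_i$ in \eqref{primera} bounded by $C$. Fix $\epsilon_0=\epsilon_0(p,q)>0$ so small that $q+2\epsilon_0<\min\{\bar q,\,q+\delta_0,\,s^*\}$ and $q+2\epsilon_0\le q_i+\delta_i$ for all large $i$. Since, by the previous step and $g_w\in L^{\bar q}(\Omega)$, the right-hand side of \eqref{primera} is bounded uniformly in $i$, H\"older's inequality on the bounded set $\Omega$ yields $\|g_{u_i}\|_{L^{q+2\epsilon_0}(\Omega)}\le C$, and also $g_w\in L^{q+2\epsilon_0}(\Omega)$. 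Applying Proposition \ref{prop2.3} once more to $u_i-w$, whose minimal weak upper gradient is dominated by $g_{u_i}+g_w$, gives $\|u_i\|_{L^{q+2\epsilon_0}(\Omega)}\le C$ uniformly.

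Then I would pass to the limit. Since $u_i\to u$ $\mu$-a.e.\ and $\mu(\Omega)<\infty$, Fatou's lemma gives $u\in L^{q+2\epsilon_0}(\Omega)$, and the uniform $L^{q+2\epsilon_0}$-bound makes $\{\,|u_i|^{q+\epsilon_0}\,\}$ uniformly integrable (it is bounded in $L^{(q+2\epsilon_0)/(q+\epsilon_0)}$, an exponent $>1$), so Vitali's convergence theorem yields $u_i\to u$ in $L^{q+\epsilon_0}(\Omega)$. For the gradients, $\{g_{u_i}\}$ is bounded in the reflexive space $L^{q+2\epsilon_0}(\Omega)$, so a subsequence converges weakly to some $g\in L^{q+2\epsilon_0}(\Omega)$. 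Since $u_i\in N^{1,q+2\epsilon_0}(\Omega)$ by the previous step, Remark \ref{gradients} identifies $g_{u_i}$ with the minimal $(q+2\epsilon_0)$-weak upper gradient of $u_i$, which is in particular a $q$-weak upper gradient of $u_i$ (a path family of zero $(q+2\epsilon_0)$-modulus has zero $q$-modulus because $\mu(\Omega)<\infty$); combined with $u_i\to u$ in $L^q(\Omega)$ and the closure of weak upper gradients under weak $L^q$-convergence (see \cite{BB}), this shows $g$ is a $q$-weak upper gradient of $u$.

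The main obstacle is the remaining assertion: upgrading $g_{u_i}\rightharpoonup g$ to \emph{strong} convergence in $L^{q+\epsilon_0}(\Omega)$, for which no a.e.\ control of the gradients is available, so Vitali's theorem does not apply as it did for the $u_i$. Since $L^{q+\epsilon_0}$ is uniformly convex, it suffices to prove $\|g_{u_i}\|_{L^{q+\epsilon_0}(\Omega)}\to\|g\|_{L^{q+\epsilon_0}(\Omega)}$; the lower bound is weak lower semicontinuity, and for the matching upper bound I would use the two structural inputs already at hand---the Gehring-type reverse H\"older inequality behind Theorem \ref{Theorem 5.2}, which equi-integrates the $(q+\epsilon_0)$-energies over the finite covering of $\Omega$, and the Caccioppoli-type inequality \eqref{Cacioppoli}, whose right-hand side involves only integrals of $(u_i-k)_+$ that converge by the strong $L^{q+\epsilon_0}$-convergence of $u_i$---while carefully absorbing the mismatch between the varying exponents $q_i$ and the limit $q$. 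Should genuine strong convergence of the $g_{u_i}$ be out of reach, the fallback is Mazur's lemma, which realizes $g$ as a strong $L^{q+\epsilon_0}$-limit of convex combinations of the $g_{u_i}$---upper gradients of the corresponding convex combinations of the $u_i$, still converging to $u$---which is what the subsequent stability argument actually uses. Everything else is a routine assembly of the tools of Section \ref{Sec2} and of Theorem \ref{Theorem 5.2}.
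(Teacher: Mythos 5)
Your argument is essentially correct and follows the same architecture as the paper's proof: use the quasiminimizing inequality with $w$ as competitor to get a uniform $q_i$-energy bound, invoke Theorem \ref{Theorem 5.2} together with Remark \ref{things} to upgrade this to a uniform $L^{q+\epsilon_0}$-bound on $g_{u_i}$ (with $\epsilon_0$ depending only on $p$, $q$ and the structural data), use the Maz'ya estimate of Proposition \ref{prop2.3} to transfer that bound to $u_i-w$, and then pass to the limit. The one genuine methodological divergence is in the limit passage for the $u_i$: the paper extends $u_i-w$ by zero, views them as functions on a large ball $B_0\supset\Omega$, and applies the metric Rellich--Kondrachov theorem (Theorem \ref{rellich}) to extract a subsequence converging strongly in $L^{q+\epsilon_0}(B_0)$; you instead reserve an extra margin of integrability ($q+2\epsilon_0$ rather than $q+\epsilon_0$), get $u\in L^{q+2\epsilon_0}(\Omega)$ by Fatou, and then apply Vitali's convergence theorem using the a.e.\ convergence assumption together with the uniform integrability of $|u_i|^{q+\epsilon_0}$ coming from the $L^{q+2\epsilon_0}$-bound. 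Both routes are sound; yours is a bit more self-contained (it does not call on the compactness theorem), while the paper's is slightly more economical with the exponent bookkeeping.

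Your concern in the last paragraph is well taken, but it is a concern about the \emph{wording} of the lemma, not a gap in your argument. The paper's own proof obtains only \emph{weak} $L^{q+\epsilon_0}$-convergence of $g_{u_{i_k}}$ to $g$ (it states ``$g_{u_{i_k}}\rightarrow g$ weakly in $L^{q+\epsilon_0}(\Omega)$''), the remark immediately following Lemma \ref{Lemma4.2} again speaks of weak convergence, and the downstream use in Lemma \ref{Lemma12.15JKbook} reads Lemma \ref{Lemma4.2} as producing weak convergence of the gradients. So the arrow ``$g_{u_i}\rightarrow g$'' in the lemma statement should be understood as weak convergence, and the worry about upgrading to strong convergence --- together with the Mazur fallback --- is unnecessary: stopping at weak convergence, exactly as you do before that last paragraph, is what the paper does and what the later lemmas require. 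Two cosmetic slips: you write ``the fixed exponent $p_0\le s$'' where you mean $p_0\ge s$ (after possibly replacing $p_0$ by $s$), and the intermediate $L^{q_i}$-estimate for $u_i-w$ via the Maz'ya inequality is not actually used once the $L^{q+2\epsilon_0}$-bound is in hand.
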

\noindent We notice that in the previous lemma we have convergence to some $q$-weak upper gradient of $u$ and not necessarily to the minimal $q$-weak upper gradient $g_u$. It is not even known whether the sequence of minimal upper gradients converges weakly to the $q$-minimal upper gradient of $u$, or not. Nevertheless, the lemma implies that 
\begin{equation*}
\Vert g_u\Vert_{L^q(\Omega)}\leq\liminf_{i\rightarrow\infty}\Vert g_{u_i}\Vert_{L^q(\Omega)}.
\end{equation*}
The proof of Lemma \ref{Lemma4.2} makes use of the metric version of Rellich-Kondrachov Theorem, see \cite{MZG}, which we report below for the reader's convenience.

\begin{theorem}[\cite{MZG}, Theorem 4.1]\label{rellich} Let $(X,d,\mu)$ be a metric space, where $\mu$ is doubling. Suppose that all the pairs $(u_i, g_i)$, satisfy a weak $(1,p)$-Poincar\'e inequality. Fix a ball $B$ and assume that the sequence $\Vert u_i\Vert_{L^1(B)}+\Vert g_i\Vert_{L^1(5\tau B)}$ is bounded. Then there is a subsequence of $(u_i)$ that converges in $L^q(B)$ for each $1\leq q\leq \frac{Qp}{Q-p}$, when $p<Q$, and for each $q\geq 1$, when $p\geq Q$. Here $Q=\log C_d$ and $C_d$ is the doubling constant of $\mu$.
\end{theorem}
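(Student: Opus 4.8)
The plan is to establish this as the metric analogue of the classical Rellich--Kondrachov theorem, replacing the Euclidean mollification step by a maximal-function Lipschitz truncation, followed by Arzel\`a--Ascoli at each fixed truncation level and a diagonal extraction. A preliminary observation fixes the geometry: every maximal operator used below is restricted to radii comparable to ${\rm diam}\, B$, so that all averaging balls occurring in the argument stay inside a fixed enlargement of $B$ (concretely $5\tau B$), where the a priori bounds on $g_i$ are available. Since $L^q(B,\mu)$ is complete, it suffices to produce a subsequence of $(u_i)$ that is Cauchy in $L^q(B)$.

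\emph{Step 1: uniform higher integrability.} Using the weak $(1,p)$-Poincar\'e inequality for the pairs $(u_i,g_i)$ together with its Sobolev self-improvement (Theorem \ref{sstars}), applied along a telescoping chain of concentric balls, and controlling the mean $|(u_i)_B|$ by $\Vert u_i\Vert_{L^1(B)}$, I would first show
\begin{equation*}
\sup_i\Vert u_i\Vert_{L^{\kappa}(B)}<\infty,\qquad \kappa=\tfrac{Qp}{Q-p}\ \text{ if }p<Q,\qquad \kappa<\infty\ \text{arbitrary if }p\geq Q,
\end{equation*}
with the bound depending only on $\tau$, $C_d$, the Poincar\'e data and the a priori bounds. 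This step converts the hypotheses into a single uniform integrability estimate essentially at the top of the exponent range in which convergence will be claimed.

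\emph{Step 2: Lipschitz truncation and Arzel\`a--Ascoli at a fixed level.} Fix $\lambda>0$. From the Poincar\'e inequality one obtains the Haj\l asz--Koskela pointwise estimate
\begin{equation*}
|u_i(x)-u_i(y)|\leq C\,d(x,y)\bigl(g_i^{*}(x)+g_i^{*}(y)\bigr)\qquad\text{for $\mu$-a.e.\ }x,y\in B,
\end{equation*}
where $g_i^{*}$ is the relevant restricted maximal function built from $g_i$. Setting $E_\lambda^{i}=\{x\in B:\ g_i^{*}(x)>\lambda\}$, the function $u_i$ is $C\lambda$-Lipschitz on $B\setminus E_\lambda^{i}$, so the McShane--Whitney extension yields $v_\lambda^{i}$ on $\overline B$ that is $C'\lambda$-Lipschitz and equals $u_i$ on $B\setminus E_\lambda^{i}$. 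The weak-type bound for the restricted maximal operator and the $L^1$ control of $g_i$ on $5\tau B$ give $\mu(E_\lambda^{i})\to0$ as $\lambda\to\infty$ uniformly in $i$; combined with Step 1 this makes $\Vert v_\lambda^{i}\Vert_{L^1(B)}$, hence (by the Lipschitz bound on the bounded, totally bounded set $\overline B$) also $\Vert v_\lambda^{i}\Vert_{L^\infty(\overline B)}$, uniformly bounded in $i$. Arzel\`a--Ascoli then produces, for this fixed $\lambda$, a subsequence along which $v_\lambda^{i}$ converges uniformly on $\overline B$.

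\emph{Step 3: diagonalisation and an $L^q$-Cauchy estimate.} Applying Step 2 successively to $\lambda=\lambda_j=2^{j}$, $j\in\mathbb N$, and diagonalising, I would extract a single subsequence $(u_{i_k})$ along which $v_{\lambda_j}^{i_k}$ converges uniformly on $\overline B$ for every $j$. For $q$ in the asserted range and any $j$,
\begin{equation*}
\Vert u_{i_k}-u_{i_l}\Vert_{L^q(B)}\leq\Vert v_{\lambda_j}^{i_k}-v_{\lambda_j}^{i_l}\Vert_{L^q(B)}+\Vert u_{i_k}-v_{\lambda_j}^{i_k}\Vert_{L^q(E_{\lambda_j}^{i_k})}+\Vert u_{i_l}-v_{\lambda_j}^{i_l}\Vert_{L^q(E_{\lambda_j}^{i_l})},
\end{equation*}
and H\"older's inequality with the uniform $L^{\kappa}$ bound of Step 1 controls each of the last two terms by $C\,\mu(E_{\lambda_j}^{i})^{1/q-1/\kappa}$, which tends to $0$ as $j\to\infty$ uniformly in $i$ when $q<\kappa$ (the borderline case $q=\kappa$ is discussed below). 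Given $\varepsilon>0$, choose $j$ so that the last two terms are below $\varepsilon/3$, then $k,l$ large so that uniform convergence makes the first term below $\varepsilon/3$; thus $(u_{i_k})$ is Cauchy, hence convergent, in $L^q(B)$, proving the statement.

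\emph{Main obstacle.} The delicate point is the exponent bookkeeping of Step 1 carried out only with the fixed-scale restricted maximal functions forced by the $L^1$ control of $g_i$ on $5\tau B$: one must keep every auxiliary averaging ball inside $5\tau B$ while still reaching essentially the sharp Sobolev exponent $Qp/(Q-p)$, and this must then be matched against the H\"older loss of Step 3. The genuinely subtle case is the endpoint $q=Qp/(Q-p)$, which is obtained only because the mass-decay exponent $Q=\log_2 C_d$ in \eqref{s} may be chosen slightly smaller whenever $\mu$ is not of maximal dimension, so that Step 1 in fact yields uniform $L^{\kappa'}$ integrability with $\kappa'>Qp/(Q-p)$; absent such an improvement one obtains the open range $q<Qp/(Q-p)$. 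The Poincar\'e hypothesis itself enters only once, through the pointwise oscillation estimate underlying the Lipschitz truncation.
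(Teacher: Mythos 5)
Your proof follows the same route as the cited source (Maasalo--Zatorska-Goldstein, Theorem 4.1, which reproduces Haj\l asz--Koskela, ``Sobolev met Poincar\'e'', Theorem 8.1): the Haj\l asz pointwise maximal estimate, Lipschitz truncation, Arzel\`a--Ascoli, diagonalisation, and a uniform higher-integrability bound that absorbs the exceptional sets via H\"older. The three-step decomposition and the triangle inequality in Step 3 are exactly the standard argument.

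There is, however, a real inconsistency in Step 2 that also infects Step 1. For a pair satisfying only a $(1,p)$-Poincar\'e inequality, the Haj\l asz pointwise estimate comes with $g_i^*=(M_R\,g_i^p)^{1/p}$, so the weak-type bound reads $\mu(E^i_\lambda)=\mu\{M_R\,g_i^p>\lambda^p\}\le C\lambda^{-p}\|g_i\|^p_{L^p(5\tau B)}$: uniform smallness of $E^i_\lambda$ therefore requires $\sup_i\|g_i\|_{L^p(5\tau B)}<\infty$, not $\sup_i\|g_i\|_{L^1(5\tau B)}<\infty$ as you invoke. The same $L^p$ bound on $g_i$ is what the Sobolev embedding of Step 1 needs to reach $\kappa=Qp/(Q-p)$. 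The $L^1$ hypothesis alone is genuinely insufficient: in $\mathbb{R}^2$ with $p=2$ take $u_i(x)=i\,\phi(ix)$ for a fixed bump $\phi$; then $\|u_i\|_{L^1}+\|\nabla u_i\|_{L^1}$ is bounded and every pair satisfies the $(1,2)$-Poincar\'e inequality, yet $\|u_i\|_{L^q}\to\infty$ for all $q>2$, so no subsequence can converge in $L^q$. The ``$L^1$'' on $g_i$ in the quoted statement appears to be a typo for ``$L^p$'' (the Haj\l asz--Koskela original reads $\|g_i\|_{L^p(\sigma B)}$); you should have flagged this mismatch rather than building the weak-type step on it. With that correction your argument closes and coincides with the cited one, and your remark about the endpoint $q=Qp/(Q-p)$ is the right reading of the non-sharp constant $Q$, although the cleanest form of the conclusion is simply the open range $q<Qp/(Q-p)$.
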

\begin{proof}[Proof of Lemma \ref{Lemma4.2}] By the quasiminimizing property \eqref{min} of $u_i$ and using the boundary data $w$ as a test function, we obtain
\begin{align*}
\int_{\Omega}(ag_{u_i}^{p_i}+bg_{u_i}^{q_i})\,\dd\mu&\leq K\int_{\Omega}(ag_{w}^{p_i}+bg_{w}^{q_i})\,\dd\mu\\
&\leq K\int_{\Omega}\beta (g_{w}^{p_i}+ g_{w}^{q_i})\,\dd\mu\\
&\leq C\int_{\Omega}(g_{w}^{p_i}+g_{w}^{q_i})\,\dd\mu,
\end{align*}
where $C$ depends on $\beta$ and  $K$.
This implies, 
\begin{equation*}
\int_{\Omega}(g_{u_i}^{p_i}+g_{u_i}^{q_i})\,\dd\mu\leq C\int_{\Omega}(g_{w}^{p_i}+ g_{w}^{q_i})\,\dd\mu,
\end{equation*}
with $C$ depending also on $\alpha$.
By Young's inequality, we deduce
\begin{equation*}
g_w^{p_i}+g_w^{q_i}\leq\left(\frac{p_i+q_i}{q_i}\right)(1+ g_w^{q_i})\leq 2(1+ g_w^{q_i}).
\end{equation*}
Since, trivially $g_{u_i}^{q_i}\leq g_{u_i}^{p_i}+g_{u_i}^{q_i}$, we get
\begin{equation*}
\dashint_{\Omega}g_{u_i}^{q_i}\, \dd\mu\leq C\dashint_{\Omega}(1+g_w^{q_i})\,\dd\mu
=C\left(\dashint_{\Omega}g_w^{q_i}\,\dd\mu+1\right).
\end{equation*}
Therefore,
\begin{align*}
\left(\dashint_{\Omega}g_{u_i}^{q_i}\,\dd\mu\right)^{\frac{1}{q_i}}&\leq C^{\frac{1}{q_i}}\left(\dashint_{\Omega}g_w^{q_i}\,\dd\mu+1\right)^{\frac{1}{q_i}}\\
&\leq C_i\left(\left(\dashint_{\Omega}g_w^{q_i}\,\dd\mu\right)^{\frac{1}{q_i}}+1\right),
\end{align*}
where now $C_i$ depends also on $q_i$. By Theorem \ref{Theorem 5.2} and the last inequality, we have
\begin{align}\label{1new}
		\left(\dashint_{\Omega} g_{u_i}^{q_i+\delta_i}\,\textrm{d}\mu\right)^{\frac{1}{q_i+\delta_i}}&\leq C_i\left(\left(\dashint_{\Omega}g_{u_i}^{q_i}\textrm{d}\mu\right)^{\frac{1}{q_i}}+\left(\dashint_{\Omega}g_w^{q_i+\delta_i}\,\dd\mu\right)^{\frac{1}{q_i+\delta_i}}+1\right)\nonumber\\
		&\leq C_i\left(\left(\dashint_{\Omega}g_w^{q_i}\,\dd\mu\right)^{\frac{1}{q_i}}+\left(\dashint_{\Omega}g_w^{q_i+\delta_i}\,\dd\mu\right)^{\frac{1}{q_i+\delta_i}}+2\right),
	\end{align}
with $C_i$ depending on $p_i, q_i,\alpha,\beta$ and $K$. Notice that, by H\"older inequality, we get
$$\left(\dashint_{\Omega}g_w^{q_i}\,\textrm{d}\mu\right)^{\frac{1}{q_i}}\leq \left(\dashint_{\Omega}g_w^{q_i+\delta_i}\,\textrm{d}\mu\right)^{\frac{1}{q_i+\delta_i}}.$$
Therefore, 
\begin{align*}
\left(\dashint_{\Omega}g_{u_i}^{q_i+\delta_i}\,\textrm{d}\mu\right)^{\frac{1}{q_i+\delta_i}}&\leq C_i\left(2\left(\dashint_{\Omega}g_w^{q_i+\delta_i}\,\textrm{d}\mu\right)^{\frac{1}{q_i+\delta_i}}+2\right)\\
&\leq C_i\left(\left(\dashint_{\Omega}g_w^{q_i+\delta_i}\,\textrm{d}\mu\right)^{\frac{1}{q_i+\delta_i}}+1\right).
\end{align*}
By the last inequality we obtain
\begin{equation}\label{labuena}
\left(\dashint_{\Omega} g_{u_i}^{q_i+\delta_i}\,\dd\mu\right)^{\frac{1}{q_i+\delta_i}}\leq C_i\left(\left(	\dashint_{\Omega} g_w^{q_i+\delta_i} \,\dd\mu\right)^{\frac{1}{q_i+\delta_i}}+1\right).
\end{equation}
Since $\lim_{i\rightarrow\infty}q_i=q$, there exists $N\in\mathbb{N}$ such that for every $i\geq N$ we have $q+\epsilon_0\leq q_i+\delta_0\leq q_i+\delta_i\leq \bar{q}$, where $\epsilon_0=\frac{\delta_0}{2}$. By the uniform boundedness of $C_i$ given by Remark \ref{things}, the H\"older inequality and \eqref{labuena}, we get
\begin{align*}
\left(\dashint_{\Omega}g_{u_i}^{q+\epsilon_0}\,\dd\mu\right)^{\frac{1}{q+\epsilon_0}}&\leq C\left(\dashint_{\Omega}g_{u_i}^{q_i+\delta_i}\,\dd\mu\right)^{\frac{1}{q_i+\delta_i}}\\
&\leq C\left(\left(\dashint_{\Omega}g_{w}^{\bar{q}}\,\dd\mu\right)^{\frac{1}{\bar{q}}}+1\right)<\infty.
\end{align*}
As a consequence, we obtain
\begin{equation}\label{4.4}
\sup_i\Vert g_{u_i-w}\Vert_{L^{q+\epsilon_0}(\Omega)}<\infty.
\end{equation}
We now consider $B_0=B(x_0,r_0)\supset\Omega$. In $S=\lbrace x\in B_0: u(x)=w(x)\rbrace$ we have $g_{u_i-w}=0$ $\mu$-a.e., where $g_{u_i-w}$ is the minimal $q_i$-weak upper gradient of $u_i-w$. Moreover, $u_i-w=0$, $q_i$-q.e. on $X\setminus\Omega$ and therefore $\mu$-a.e. on $X\setminus\Omega$. By decreasing $\epsilon_0$ if necessary, we can further assume that $q+\epsilon_0<s^*$ and therefore, $X$ supports a $(q+\epsilon_0,s)$-Poincar\'e inequality. By H\"older inequality, we easily get that $X$ supports a $(q+\epsilon_0, q+\epsilon_0)$-Poincar\'e inequality as well, therefore we can use Proposition \ref{prop2.3} for $u_i-w\in N^{1,q}(X)$. We also note that $p$-fatness implies $q+\epsilon_0$-fatness because $q+\epsilon_0>p$. Thus, ${\rm cap}_{q+\epsilon_0}(S,2B_0)\,r^{q+\epsilon_0}\geq C\mu(B_0),$ and so
\begin{align*}
\left(\int_\Omega\vert u_i-w\vert^{q+\epsilon_0}\,\dd\mu\right)^{\frac{1}{q+\epsilon_0}}&\leq\left(\mu(2B_0)\dashint_{2B_0}\vert u_i-w\vert^{q+\epsilon_0}\,\dd\mu\right)^{\frac{1}{q+\epsilon_0}}\\
&\leq\left(\frac{C\mu(B_0)}{{\rm cap}_{q+\epsilon_0}(S,2B_0)}\int_{2\lambda B_0} g_{u_i-w}^{q+\epsilon_0}\,\dd\mu\right)^{\frac{1}{q+\epsilon_0}}\\
&\leq C r_0\left(\int_{B_0} g_{u_i-w}^{q+\epsilon_0}\,\dd\mu\right)^{\frac{1}{q+\epsilon_0}}\\
&=Cr_0\,\mu(\Omega)^{\frac{1}{q+\epsilon_0}}\left(\dashint_{\Omega}g_{u_i-w}^{q+\epsilon_0}\,\dd\mu\right)^{\frac{1}{q+\epsilon_0}}\\
&\leq Cr_0\,\mu(\Omega)^{\frac{1}{q+\epsilon_0}}\left(\dashint_{\Omega}(g_{u_i}+g_{w})^{q+\epsilon_0}\,\dd\mu\right)^{\frac{1}{q+\epsilon_0}}\\
&\leq C\,2^{1-\frac{1}{q+\epsilon_0}}r_0\,\mu(\Omega)^{\frac{1}{q+\epsilon_0}}\left(\dashint_{\Omega}g_{u_i}^{q+\epsilon_0}\,\dd\mu+   \dashint_{\Omega}g_{w}^{q+\epsilon_0}\,\dd\mu \right)^{\frac{1}{q+\epsilon_0}}\\
&\leq Cr_0\,\mu(\Omega)^{\frac{1}{q+\epsilon_0}}\left(\left(\dashint_{\Omega}g_{u_i}^{q+\epsilon_0}\,\dd\mu\right)^{\frac{1}{q+\epsilon_0}}+\left(\dashint_{\Omega}g_{w}^{q+\epsilon_0}\,\dd\mu\right)^{\frac{1}{q+\epsilon_0}}\right)\\
&\leq Cr_0\,\mu(\Omega)^{\frac{1}{q+\epsilon_0}}\left(\left(\dashint_{\Omega}g_w^{\bar{q}}\,\dd\mu\right)^\frac{1}{\bar{q}}+1\right)<\infty,
\end{align*}
where in the latter step we applied H\"older inequality. So, by this last inequality and \eqref{4.4}, we get $\sup_{i}\Vert u_i-w\Vert_{N^{1,q+\epsilon_0}(\Omega)}<\infty$.
Meaning, the sequence $(u_i-w)$ is uniformly bounded in $N^{1,q+\epsilon_0}(\Omega)$. By extending $u_i-w$ to zero in $X\setminus\Omega$, we obtain that the sequence
$(\Vert u_i-w\Vert_{L^{1}(B_0)}+\Vert g_{u_i-w}\Vert_{L^{1}(5\lambda B_0)})$
is bounded.\\
Theorem \ref{rellich} implies the existence of $\tilde{u}\in L^{q+\epsilon_0}(B_0)$ and a subsequence $(u_{i_k})$ such that $u_{i_k}-w\rightarrow\tilde{u}-w\ \ \textrm{in}\ L^{q+\epsilon_0}(B_0)$. As $u_i\rightarrow u$ $\mu$-a.e. in $\Omega$, then $\tilde{u}=u$ $\mu$-a.e. in $\Omega$ and $u\in L^{q+\epsilon_0}(\Omega)$. By the uniform boundedness  of $\Vert g_{u_{i_k}}\Vert_{L^{q+\epsilon_0}(\Omega)}$, we find $g\in L^{q+\epsilon_0}(\Omega)$ and a subsequence, still called $g_{u_{i_k}}$ to simplify notation, such that $g_{u_{i_k}}\rightarrow g\ \textrm{weakly in }L^{q+\epsilon_0}(\Omega).$ Finally, using Lemma 3.6 in \cite{Sh} we get that $g$ is a weak upper gradient of $u\in N^{1,q+\epsilon_0}(\Omega)$.
\end{proof}

Now, we consider a compact set $D\subset \Omega$ and define $D(t) = \{x \in \Omega:{\rm dist}(x,D) < t\}$, for every $t > 0$.
Then $\overline{D(t)} \subset \Omega$ for $t \in ]0, t_0[$, where $t_0 = {\rm dist}(D,X\setminus \Omega)$. We give a double phase version of Lemma 4.4 in \cite{MZG}, which was already adapted from a result proven by Kinnunen and Martio \cite{KM} concerning super-quasiminimizers. More specifically, we obtain a local uniform integrability estimate for the minimal upper gradients.
\begin{lemma}\label{Lemma4.4}
Let $u_i$, $u$ be as in Theorem \ref{Theorem 1.1.}. Then 
\begin{equation*}
\limsup_{i \to \infty}\int_{D(t)}(g_{u_i}^{p_i}+ g_{u_i}^{q_i})\,\dd\mu \leq C \int_{D(t)} (g_{u}^{p}+g_{u}^{q})\, \dd\mu,
\end{equation*}
for almost every $t \in ]0, t_0[$, with the constant $C$ depending on $\alpha, \beta$ and $K$.
\end{lemma}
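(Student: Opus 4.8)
The plan is to use a Caccioppoli-type estimate to compare $g_{u_i}$ on $D(t)$ against $g_u$ on a slightly larger neighborhood, and then to pass to the limit using the strong $L^{q+\epsilon_0}$ convergence $u_i\to u$ from Lemma \ref{Lemma4.2}. First I would fix a compact $D\subset\Omega$ and, for $t\in]0,t_0[$ and small $h>0$ with $t+h<t_0$, pick a Lipschitz cut-off $\eta$ with $0\le\eta\le1$, $\eta=1$ on $D(t)$, $\operatorname{supp}\eta\subset D(t+h)$ and $g_\eta\le C/h$. Since $\eta(u_i-u)\in N^{1,q_i}_0(D(t+h))$ (using $u_i-w\in N^{1,q_i}_0(\Omega)$, so that $u_i-u$ is a legitimate Newtonian function vanishing near $\partial\Omega$ after multiplication by $\eta$; note $u\in N^{1,q+\epsilon_0}(\Omega)\subset N^{1,q_i}(\Omega)$ for $i$ large), I can test the quasiminimizing inequality \eqref{min} for $u_i$ with $v_i=u_i-\eta(u_i-u)=u+(1-\eta)(u_i-u)$, whose minimal $q_i$-weak upper gradient satisfies the Leibniz bound $g_{v_i}\le(1-\eta)g_{u_i}+\eta g_u+|u_i-u|g_\eta$.

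Plugging this into \eqref{min} on $\Omega'=D(t+h)$ gives, after using $0<\alpha\le a,b\le\beta$ and the elementary convexity inequality $(x+y+z)^r\le 3^{r-1}(x^r+y^r+z^r)$ for $r=p_i,q_i$,
\begin{align*}
\int_{D(t+h)}(g_{u_i}^{p_i}+g_{u_i}^{q_i})\,\dd\mu
&\le C\int_{D(t+h)\setminus D(t)}(g_{u_i}^{p_i}+g_{u_i}^{q_i})\,\dd\mu
+C\int_{D(t+h)}(g_u^{p_i}+g_u^{q_i})\,\dd\mu\\
&\quad+\frac{C}{h^{p_i}}\int_{D(t+h)}|u_i-u|^{p_i}\,\dd\mu
+\frac{C}{h^{q_i}}\int_{D(t+h)}|u_i-u|^{q_i}\,\dd\mu,
\end{align*}
with $C$ depending only on $\alpha,\beta,K$. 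The first term on the right is the ``hole'' that must be filled: I would absorb it by a standard iteration argument (the hole-filling of Lemma 6.1 in \cite{G}, already invoked in the proof of Theorem \ref{Theorem 5.2}), adding $C\int_{D(t+h)}(g_{u_i}^{p_i}+g_{u_i}^{q_i})$ to both sides, dividing by $1+C$, and iterating over a chain of nested neighborhoods, to conclude
\begin{equation*}
\int_{D(t)}(g_{u_i}^{p_i}+g_{u_i}^{q_i})\,\dd\mu
\le C\int_{D(t+h)}(g_u^{p_i}+g_u^{q_i})\,\dd\mu
+\frac{C}{h^{p_i}}\int_{D(t+h)}|u_i-u|^{p_i}\,\dd\mu
+\frac{C}{h^{q_i}}\int_{D(t+h)}|u_i-u|^{q_i}\,\dd\mu.
\end{equation*}

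The final step is the passage to the limit $i\to\infty$, for which I would take the subsequence from Lemma \ref{Lemma4.2}. Since $u_i\to u$ in $L^{q+\epsilon_0}(\Omega)$ and $q_i<q+\epsilon_0$ for $i$ large, H\"older's inequality (with $\mu(\Omega)<\infty$) forces $\int_{D(t+h)}|u_i-u|^{p_i}\,\dd\mu\to0$ and $\int_{D(t+h)}|u_i-u|^{q_i}\,\dd\mu\to0$, so the two remainder terms vanish in the limit for each fixed $h>0$. For the leading term, since $g_u\in L^{q+\epsilon_0}(\Omega)$ and $g_u^{p_i}+g_u^{q_i}\le 2(1+g_u^{q+\epsilon_0})$ is dominated uniformly in $i$, while $g_u^{p_i}\to g_u^p$ and $g_u^{q_i}\to g_u^q$ pointwise $\mu$-a.e., dominated convergence yields $\int_{D(t+h)}(g_u^{p_i}+g_u^{q_i})\,\dd\mu\to\int_{D(t+h)}(g_u^p+g_u^q)\,\dd\mu$. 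Hence $\limsup_{i}\int_{D(t)}(g_{u_i}^{p_i}+g_{u_i}^{q_i})\,\dd\mu\le C\int_{D(t+h)}(g_u^p+g_u^q)\,\dd\mu$, and letting $h\to0$ with $t$ a Lebesgue point of $h\mapsto\int_{D(h)}(g_u^p+g_u^q)\,\dd\mu$ (which is absolutely continuous, hence differentiable a.e., so the claim holds for a.e.\ $t\in]0,t_0[$) gives the asserted bound. The main obstacle is the bookkeeping of exponents: making sure $\eta(u_i-u)$ is an admissible test function in $N^{1,q_i}_0$ (which needs $u\in N^{1,q_i}(\Omega)$, available only for large $i$ via $u\in N^{1,q+\epsilon_0}(\Omega)$), and controlling all constants uniformly in $i$ as $p_i\to p$, $q_i\to q$ — this uses $1<s<p_i<q_i<s^*$ and the elementary inequalities above so that the constant $C$ depends only on $\alpha,\beta,K$.
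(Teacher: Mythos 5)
Your proof is correct in substance but handles the ``hole-filling'' step by a genuinely different device than the paper. You absorb the annular term $\int_{D(t+h)\setminus D(t)}(g_{u_i}^{p_i}+g_{u_i}^{q_i})\,\dd\mu$ for each fixed $i$ by invoking the iteration lemma (Giusti, Lemma~6.1) over a chain of nested neighbourhoods with radii in $[t,t+h]$, obtaining a Caccioppoli-type bound for $u_i$ in terms of $g_u$ and $|u_i-u|$, and only then you let $i\to\infty$ and $h\to0$. The paper, by contrast, never absorbs the hole at the level of fixed $i$: after the single hole-filling step it keeps $(1+C)\int_{D(t')}(\cdot)\le C\int_{D(t)}(\cdot)+\cdots$ with $t'<t$, passes directly to $\limsup_{i\to\infty}$ (using the $L^{q+\epsilon_0}$ convergence of Lemma~\ref{Lemma4.2} to kill the $|u-u_i|$ terms and dominated convergence for the $g_u$ terms, as you do), and defines $\psi(\sigma)=\limsup_i\int_{D(\sigma)}(g_{u_i}^{p_i}+g_{u_i}^{q_i})\,\dd\mu$. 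Since $\psi$ is nondecreasing, hence continuous off a countable set, at a continuity point $t$ one can send $t'\to t^-$ in $(1+C)\psi(t')\le C\psi(t)+C\int_{D(t)}(g_u^p+g_u^q)$ and absorb $C\psi(t)$ at the very end. The two routes buy slightly different things: the paper's argument is shorter and avoids invoking the iteration lemma and checking that its constant is uniform in $i$ (which you correctly flag but which does require $q_i<s^*$), while yours gives a clean quantitative estimate for each individual $u_i$ before any limits are taken. One small imprecision in your write-up: the map $h\mapsto\int_{D(t+h)}(g_u^p+g_u^q)\,\dd\mu$ need not be absolutely continuous --- what you actually need (and have) is that it is nondecreasing, hence continuous outside a countable set, and that $\mu(\{\operatorname{dist}(\cdot,D)=t\})=0$ for all but countably many $t$; this is enough to let $h\to0$ for a.e.\ $t$. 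Also, be aware that the constant emerging from the iteration lemma depends on the exponent $\gamma=q_i$ and on $\theta=C/(1+C)$, so the uniformity of $C$ really uses $q_i\in\,]s,s^*[$ as you note at the end; the paper's formulation of the dependence as ``$\alpha,\beta,K$'' is itself a little loose on this point.
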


\begin{proof} We define $\phi_i= \eta (u-u_i)$ where $\eta$ is a Lipschitz cut-off function given by
$$\eta =\begin{cases}
	1 \quad \mbox{on $D(t')$,}\\
	0 \quad \mbox{on $\Omega \setminus D(t)$,}
\end{cases}$$
with $0 < t' < t < t_0$. 
Since $\lim_{i\rightarrow\infty}q_i=q$, there exists $N\in\mathbb{N}$ such that for every $i\geq N$ we have $p_i<q_i<q+\epsilon_0$. Notice that $\phi_i\in N^{1,q_i}_0(D(t))$, because $u_i$ and $u$ belong to 
$N^{1,q+\epsilon_0}(\Omega)$. As a consequence of the quasiminimizing property \eqref{min} of $u_i$, we obtain
\begin{equation*}
\int_{D(t')}(a g_{u_i}^{p_i}+b g_{u_i}^{q_i})\,\dd\mu \leq \int_{D(t)}(a g_{u_i}^{p_i}+b g_{u_i}^{q_i})\,\dd\mu\leq K \int_{D(t)}(a g_{u_i+\phi_i}^{p_i}+b g_{u_i+\phi_i}^{q_i})\,\dd\mu.
\end{equation*}
Now, using Lemma 2.1 of \cite{MZG}, we get
$g_{u_i+\phi_i}\leq (1-\eta)g_{u_i}+g_{\eta}|u-u_i|+\eta g_u,$ therefore
\begin{align*}
	\int_{D(t')}(a g_{u_i}^{p_i}+b g_{u_i}^{q_i})\,\dd\mu&\leq C\Bigg(\int_{D(t)}a(1-\eta)^{p_i}g_{u_i}^{p_i}\,\dd\mu+\int_{D(t)}ag_{\eta}^{p_i}|u-u_i|^{p_i} \,\dd\mu + \int_{D(t)} a\eta^{p_i}g_u^{p_i}\,\dd\mu\\&\quad\quad+ \int_{D(t)}b(1-\eta)^{q_i}g_{u_i}^{q_i}\,\dd\mu +\int_{D(t)}bg_{\eta}^{q_i}|u-u_i|^{q_i}\,\dd\mu + \int_{D(t)} b\eta^{q_i}g_u^{q_i} \,\dd\mu\Bigg)
	\\ &\leq C\Bigg(\int_{D(t)}(1-\eta)^{p_i}g_{u_i}^{p_i}\,\dd\mu +\int_{D(t)}g_{\eta}^{p_i}|u-u_i|^{p_i}\,\dd\mu + \int_{D(t)} \eta^{p_i}g_u^{p_i} \,\dd\mu\\&\quad\quad+ \int_{D(t)}(1-\eta)^{q_i}g_{u_i}^{q_i}\,\dd\mu +\int_{D(t)}g_{\eta}^{q_i}|u-u_i|^{q_i} \,\dd\mu + \int_{D(t)} \eta^{q_i}g_u^{q_i} \,\dd\mu\Bigg),
\end{align*}
with $C$ depending on $\beta$ and $K$. Thus,
\begin{align*}
	\int_{D(t')}(g_{u_i}^{p_i}+g_{u_i}^{q_i})\,\dd\mu	&\leq C\Bigg(\int_{D(t)}(1-\eta)^{p_i}g_{u_i}^{p_i}\,\dd\mu +\int_{D(t)}g_{\eta}^{p_i}|u-u_i|^{p_i} \,\dd\mu + \int_{D(t)} \eta^{p_i}g_u^{p_i}\,\dd\mu\\&\quad\quad+ \int_{D(t)}(1-\eta)^{q_i}g_{u_i}^{q_i}\,\dd\mu +\int_{D(t)}g_{\eta}^{q_i}|u-u_i|^{q_i}\,\dd\mu + \int_{D(t)} \eta^{q_i}g_u^{q_i}\,\dd\mu\Bigg),
\end{align*} 
where $C$ also depends on $\alpha$.

\noindent By definition $\eta=1$ on $D(t')$, therefore by adding $C\int_{D(t')}(g_{u_i}^{p_i}+g_{u_i}^{q_i})\,\dd\mu$ to both sides of the inequality, we get
\begin{align}\label{newone}
(1+C)\int_{D(t')}& (g_{u_i}^{p_i}+g_{u_i}^{q_i})\,\dd\mu\nonumber\\ 
	&\leq C\Bigg(\int_{D(t)}g_{u_i}^{p_i}\,\dd\mu +\int_{D(t)}g_{\eta}^{p_i}|u-u_i|^{p_i} \,\dd\mu + \int_{D(t)} \eta^{p_i}g_u^{p_i} \,\dd\mu\nonumber\\&\quad\quad+ \int_{D(t)}g_{u_i}^{q_i}\,\dd\mu +\int_{D(t)}g_{\eta}^{q_i}|u-u_i|^{q_i}\,\dd\mu + \int_{D(t)} \eta^{q_i}g_u^{q_i} \,\dd\mu\Bigg).
\end{align} 
We consider a nondecreasing function $\psi$ defined on the interval $]0, t_0[$ as
\begin{equation*}
	\psi(t)= \limsup_{i \to \infty}\int_{D(t)}(g_{u_i}^{p_i}+g_{u_i}^{q_i})\,\dd\mu<\infty,
\end{equation*}
where the latter is given by the uniform higher integrability
of $u_i$. As a consequence, $\psi$ is discontinuous in at most a countable number of points. Now, we consider a point $t$ where $\psi$ is continuous. Passing to superior limit on both sides of \eqref{newone}, it follows
\begin{align*}
(1+C)\psi(t')\leq& C \psi(t)+C \int_{D(t)} (g_{u}^{p}+ g_{u}^{q})\,\dd\mu\\&+ C \limsup_{i \to \infty}\int_{D(t)} |u-u_i|^{p_i}\,\dd\mu+ C \limsup_{i \to \infty}\int_{D(t)} |u-u_i|^{q_i}\,\dd\mu .
\end{align*}
Using H\"{o}lder inequality and Lemma \ref{Lemma4.2}, we deduce that
$$
\limsup_{i \to \infty}\int_{D(t)} |u-u_i|^{p_i}\,\dd\mu=\limsup_{i \to \infty}\int_{D(t)} |u-u_i|^{q_i}\,\dd\mu= 0.
$$
Recalling that $\psi$ is continuous at $t$, we have
\begin{align*}
	(1+C)\psi(t)\leq C \psi(t)+C \int_{D(t)} (g_{u}^{p}+ g_{u}^{q})\,\dd\mu,
\end{align*}
that is
\begin{equation*}
\psi(t)\leq C\int_{D(t)} (g_{u}^{p}+g_{u}^{q})\, \dd\mu.
\end{equation*}
This concludes the proof.
\end{proof}

The next lemma will be needed as a first step in order to show that $u$ is a quasiminimizer of the $(p,q)$-energy integral with boundary data $w$. 
\begin{lemma}\label{Lemma12.13JKbook}
	Under the assumptions in Theorem \ref{Theorem 1.1.} and notation as in Lemma \ref{Lemma4.2}, we have $u-w \in N^{1,q}_0(\Omega)$.
\end{lemma}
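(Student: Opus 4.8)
The goal is to show $u-w \in N^{1,q}_0(\Omega)$, knowing from Lemma~\ref{Lemma4.2} that $u \in N^{1,q+\epsilon_0}(\Omega)$ with $g$ a $q$-weak upper gradient of $u$, and $w \in N^{1,\bar q}(\Omega)$ with $\bar q > q$; in particular $u-w \in N^{1,q}(\Omega)$ (indeed in $N^{1,q+\epsilon_0}(\Omega)$ after possibly shrinking $\epsilon_0$). The plan is to use the intersection characterization of zero-boundary-value spaces, Proposition~\ref{iintersection}: since $X\setminus\Omega$ is uniformly $p$-fat and hence (by Proposition~\ref{q0fat}) uniformly $q$-fat, to conclude $u-w \in N^{1,q}_0(\Omega)$ it suffices to show $u-w \in N^{1,q-\epsilon}_0(\Omega)$ for every small $\epsilon>0$. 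For $i$ large we have $p_i < q_i$ with $q_i \to q$, so for any fixed $\epsilon>0$ there is $N$ with $q-\epsilon < q_i$ for all $i\ge N$; thus each $u_i - w$, which lies in $N^{1,q_i}_0(\Omega)$ by hypothesis (the boundary data of $u_i$ is $w$), belongs to $N^{1,q-\epsilon}_0(\Omega)$ as well by H\"older.

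The key step is then to pass to the limit in the space $N^{1,q-\epsilon}_0(\Omega)$. I would invoke Lemma~\ref{compactnessr}: if the sequence $(u_i - w)$ is bounded in $N^{1,q-\epsilon}_0(\Omega)$ and $u_i - w \to u - w$ $\mu$-a.e., then $u-w \in N^{1,q-\epsilon}_0(\Omega)$. The $\mu$-a.e.\ convergence is exactly the standing hypothesis $u_i \to u$ $\mu$-a.e.\ in $\Omega$ (with $w$ fixed), extended by zero outside $\Omega$. For the uniform bound in $N^{1,q-\epsilon}_0(\Omega)$: by H\"older and \eqref{4.4} (together with the estimate on $\|u_i-w\|_{L^{q+\epsilon_0}(\Omega)}$ established inside the proof of Lemma~\ref{Lemma4.2}), the quantities $\|u_i-w\|_{L^{q-\epsilon}(\Omega)}$ and $\|g_{u_i-w}\|_{L^{q-\epsilon}(\Omega)}$ are bounded uniformly in $i$, provided $q-\epsilon \le q+\epsilon_0$, which holds for all small $\epsilon$. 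Here one uses that $g_{u_i-w}$, the minimal $q_i$-weak upper gradient of $u_i-w$, is also a $(q-\epsilon)$-weak upper gradient once $q-\epsilon < q_i$, so it controls $\|u_i-w\|_{N^{1,q-\epsilon}}$.

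Putting these together: fix $\epsilon \in (0, \min\{\epsilon_0, s^*-q\})$ small. For $i$ large, $u_i - w \in N^{1,q_i}_0(\Omega) \subset N^{1,q-\epsilon}_0(\Omega)$, the sequence is bounded in $N^{1,q-\epsilon}_0(\Omega)$, and it converges $\mu$-a.e.\ to $u-w$; Lemma~\ref{compactnessr} gives $u-w \in N^{1,q-\epsilon}_0(\Omega)$. Since $\epsilon>0$ was arbitrary and we already know $u-w \in N^{1,q}(\Omega)$, Proposition~\ref{iintersection} yields $u-w \in N^{1,q}_0(\Omega)$, as desired. The main technical point to be careful about is the interplay of exponents, i.e.\ checking that for $i$ large the single exponent $q-\epsilon$ lies below both $q_i$ (so that zero-boundary membership and weak-upper-gradient properties transfer down) and $q+\epsilon_0$ (so that the uniform bounds from Lemma~\ref{Lemma4.2} apply); this is where the hypothesis $q_i \to q$ and the higher integrability exponent $\epsilon_0$ from Lemma~\ref{Lemma4.2} are used in an essential way.
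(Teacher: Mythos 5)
Your proof follows essentially the same route as the paper's: for small $\epsilon>0$ and large $i$, place $u_i-w$ in $N^{1,q-\epsilon}_0(\Omega)$, obtain a uniform bound in $N^{1,q-\epsilon}_0(\Omega)$, invoke Lemma~\ref{compactnessr} together with the $\mu$-a.e.\ convergence to get $u-w\in N^{1,q-\epsilon}_0(\Omega)$, and conclude with Proposition~\ref{iintersection}. One small slip worth correcting: you cite Proposition~\ref{q0fat} for ``$p$-fat $\Rightarrow$ $q$-fat,'' but that proposition only lowers the exponent ($p$-fat $\Rightarrow$ $p_0$-fat for some $p_0<p$); the implication you actually need, from $p$-fatness to $(q-\epsilon)$-fatness and $q$-fatness with $q-\epsilon>p$, is a direct consequence of H\"older/monotonicity of the variational capacity, which is how the paper states it.
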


\begin{proof}
Let $0<\epsilon<q-p$ and let $(u_i)$ be the subsequence given by Lemma \ref{Lemma4.2}.
For sufficiently large $i\in \mathbb{N}$, we have  $q_i>q-\epsilon$  and $u_i-w\in N^{1,q_i}_0(\Omega)$. Thus, $u_i-w\in N^{1,q-\epsilon}_0(\Omega)$ for every such $i\in \mathbb{N}$ and, eventually passing to a subsequence, we may assume that this holds for every $i\in \mathbb{N}$. 
As a consequence of the Sobolev inequality (see Remark \ref{Lemma 2.4EMThesis}), we deduce
\begin{equation*}
	\|u_i-w\|_{N^{1,q-\epsilon}_0(\Omega)} \leq C \|g_{u_i-w}\|_{L^{q-\epsilon}(\Omega)}\leq C \|g_{u_i-w}\|_{L^{q}(\Omega)}.
\end{equation*}
As a consequence, the norms of $u_i-w$ are uniformly bounded in $N^{1,q-\epsilon}_0(\Omega)$.
By the self-improving property of the $p$-fatness, we get that $X\setminus \Omega$ is uniformly $(q-\epsilon)$-fat for $\epsilon>0$ such that $p<q-\epsilon.$
Using Lemma \ref{compactnessr} and the fact that $u_i \to u$ $\mu$-a.e., we deduce that $u-w \in N^{1,q-\epsilon}_0(\Omega),$ for all $\epsilon>0$ such that $p < q-\epsilon$.
We also note that $p$-fatness implies $q$-fatness, since $p<q$. The proof is complete by applying Proposition \ref{iintersection} and the $q$-fatness of $X\setminus \Omega$.
\end{proof}

At this point, we are left to show that $u$  is a quasiminimizer of the $(p,q)$-energy integral with boundary data $w$. In order to do that, we begin with the following inequality, that is a lower semicontinuity result in the varying exponent case.
\begin{lemma}\label{Lemma12.15JKbook}
Under the assumptions in Theorem \ref{Theorem 1.1.} and notation as in Lemma \ref{Lemma4.2}, we have
		\begin{equation}\label{4.6}
			\int_E g_u^{q} \,\dd\mu \leq \liminf_{i \to \infty} \int_E g_{u_i}^{q_i} \,\dd\mu,
		\end{equation}
for every $\mu$-measurable subset $E$ of $\Omega$. 
\end{lemma}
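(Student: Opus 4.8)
The plan is to derive \eqref{4.6} from the weak convergence of the minimal upper gradients established in Lemma \ref{Lemma4.2} by a standard convexity (weak lower semicontinuity) argument, after first removing the mismatch between the varying exponents $q_i$ and the fixed exponent $q$. Recall from Lemma \ref{Lemma4.2} (working with the subsequence fixed there) that $g_u\le g$ $\mu$-a.e., that $g_{u_i}\rightharpoonup g$ weakly in $L^{q+\epsilon_0}(\Omega)$, and that $M:=\sup_i\|g_{u_i}\|_{L^{q+\epsilon_0}(\Omega)}<\infty$. Since $\mu(\Omega)<\infty$ and $q<q+\epsilon_0$, this weak convergence restricts, for any $\mu$-measurable $E\subset\Omega$, to weak convergence $g_{u_i}\chi_E\rightharpoonup g\chi_E$ in $L^{q}(E)$: any $\psi\in L^{q'}(E)$ extended by $0$ to $\Omega$ lies in $L^{(q+\epsilon_0)'}(\Omega)$ because on a finite-measure space $L^{q'}\subseteq L^{(q+\epsilon_0)'}$, and $\{g_{u_i}\chi_E\}$ is bounded in $L^q(E)$ by H\"older. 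Weak lower semicontinuity of the convex functional $h\mapsto\int_E|h|^{q}\,\dd\mu$ then gives
$$
\int_E g_u^{q}\,\dd\mu\le\int_E g^{q}\,\dd\mu\le\liminf_{i\to\infty}\int_E g_{u_i}^{q}\,\dd\mu ,
$$
so it only remains to replace, on the right-hand side, the exponent $q$ by $q_i$.

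The second step is the elementary estimate $\int_E|g_{u_i}^{q_i}-g_{u_i}^{q}|\,\dd\mu\to 0$. Applying the mean value theorem to $r\mapsto t^{r}$ yields, for every $t\ge 0$ and all $i$ large enough that $|q_i-q|\le\epsilon_0/2$, a bound $|t^{q_i}-t^{q}|\le|q_i-q|\,t^{\xi}|\ln t|$ with $\xi$ between $q_i$ and $q$, hence $\xi\in[q-\tfrac{\epsilon_0}{2},q+\tfrac{\epsilon_0}{2}]$. On the set $\{g_{u_i}\ge 1\}$ one has $t^{\xi}|\ln t|\le t^{q+\epsilon_0/2}\ln t\le C_{\epsilon_0}\,t^{q+\epsilon_0}$ (using $\ln t\le C_{\epsilon_0}t^{\epsilon_0/2}$ for $t\ge 1$), so this portion of the integral is at most $C_{\epsilon_0}|q_i-q|\,M^{q+\epsilon_0}$. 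On the set $\{g_{u_i}<1\}$ one has $t^{\xi}|\ln t|\le t^{q-\epsilon_0/2}|\ln t|\le C'_{\epsilon_0}$, since $s\mapsto s^{q-\epsilon_0/2}|\ln s|$ is bounded on $]0,1[$ (here $q-\epsilon_0/2>0$), so this portion is at most $C'_{\epsilon_0}|q_i-q|\,\mu(E)$. Both contributions vanish as $i\to\infty$, whence $\liminf_{i\to\infty}\int_E g_{u_i}^{q}\,\dd\mu=\liminf_{i\to\infty}\int_E g_{u_i}^{q_i}\,\dd\mu$, and combining this with the previous display proves \eqref{4.6}.

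The only genuinely delicate point is this exponent swap near $g_{u_i}=0$: there $t^{q_i}$ and $t^{q}$ are pointwise close but not uniformly so, which is exactly why one cuts the domain at the level set $\{g_{u_i}=1\}$ and uses, on the small part, the boundedness of $s\mapsto s^{\beta}|\ln s|$ on $]0,1[$ for $\beta>0$, and, on the large part, the uniform bound $M$ from Lemma \ref{Lemma4.2}. Throughout, the finiteness of $\mu(\Omega)$ (hence of $\mu(E)$) is used both to pass from $L^{q+\epsilon_0}$- to $L^{q}$-integrability and weak convergence, and to estimate the small-value contribution above.
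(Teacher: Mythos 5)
Your proof is correct, but it proceeds by a genuinely different decomposition than the paper's. The paper first lowers the exponent: for a fixed $\epsilon>0$ (so that eventually $q-\epsilon<q_i$), it applies weak lower semicontinuity of $h\mapsto\int_E|h|^{q-\epsilon}\,\dd\mu$ along the weakly convergent subsequence, and then uses H\"older's inequality
$\int_E g_{u_i}^{q-\epsilon}\,\dd\mu\le\bigl(\int_E g_{u_i}^{q_i}\,\dd\mu\bigr)^{(q-\epsilon)/q_i}\mu(E)^{1-(q-\epsilon)/q_i}$
to pass from exponent $q-\epsilon$ to the varying $q_i$; the conclusion then follows by letting $\epsilon\to 0$. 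You instead apply weak lower semicontinuity directly at the limiting exponent $q$ (which is justified as you note, since weak convergence in $L^{q+\epsilon_0}(\Omega)$ restricts to weak convergence in $L^q(E)$ on the finite-measure set), and then swap the exponent inside the liminf by an explicit mean-value estimate of $|t^{q_i}-t^q|$, split at the level set $\{g_{u_i}=1\}$, using the uniform $L^{q+\epsilon_0}$ bound on the large part and the boundedness of $s\mapsto s^{\beta}|\ln s|$ on $]0,1[$ on the small part. Both routes rely on the same two inputs from Lemma \ref{Lemma4.2} (weak convergence and the uniform higher-integrability bound), but you trade the paper's auxiliary $\epsilon\to 0$ limit and H\"older interpolation for a one-shot logarithmic estimate; the paper's version stays entirely within convexity and H\"older, which is slightly more economical, while yours makes the mechanism by which $q_i\to q$ is exploited completely explicit and avoids the nested limit in $\epsilon$. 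One small point worth keeping in mind for both arguments: the weak convergence in Lemma \ref{Lemma4.2} holds along a subsequence, so to obtain the liminf over the full sequence one should first pass to a subsequence realizing $\liminf_i\int_E g_{u_i}^{q_i}\,\dd\mu$ and then extract, from it, the weakly convergent sub-subsequence; this is implicit in the paper's phrasing ``notation as in Lemma \ref{Lemma4.2}'' and is equally needed in your version.
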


\begin{proof} Since $\lim_{i\rightarrow\infty}q_i=q$, for every $\epsilon> 0$, there exists $N\in\mathbb{N}$ such that for every $i\geq N$ we have $1<q_i<q-\epsilon_0$. 
As a consequence of Lemma \ref{Lemma4.2}, $(g_{u_i})$ converges weakly to a weak upper gradient $g$ of $u$, therefore, for every $\mu$-measurable
subset $E$ of $\Omega$, we get
\begin{align*}
	\int_E g_u^{q-\epsilon} \,\dd\mu&\leq\int_E g^{q-\epsilon} \,\dd\mu \leq \liminf_{i \to \infty}\int_E g_{u_i}^{q-\epsilon} \,\dd\mu\\& \leq \liminf_{i \to \infty}\left(\int_E g_{u_i}^{q_i} \,\dd\mu\right)^{\frac{q-\epsilon}{q_i}} \mu(E)^{1-\frac{q-\epsilon}{q_i}}\\& \leq \liminf_{i \to \infty}\left(\int_E g_{u_i}^{q_i} \,\dd\mu\right)^{\frac{q-\epsilon}{q}} \mu(E)^{\frac{\epsilon}{q}}.
\end{align*}
Taking the limit as $\epsilon\rightarrow 0$, we conclude the proof.
\end{proof}
\begin{remark}\label{alsoforp}
We note that Lemma \ref{Lemma12.15JKbook} also holds for $p$ instead of $q$, meaning that, under the assumptions of Theorem \ref{Theorem 1.1.}, we get
\begin{equation}
			\int_E g_u^{p} \,\dd\mu \leq \liminf_{i \to \infty} \int_E g_{u_i}^{p_i} \,\dd\mu,
		\end{equation}
for every $\mu$-measurable subset $E$ of $\Omega$
\end{remark}

Finally, we are ready to show that the function $u$  is a quasiminimizer of the $(p,q)$-energy integral with boundary data $w$.

\begin{lemma}\label{Lemma12.18JKbook}
	Under the assumptions in Theorem \ref{Theorem 1.1.} and notation as in Lemma \ref{Lemma4.2}, we have 
\begin{equation}\label{4.5}
	\int_{\Omega'} (a g_{u}^p+bg_{u}^q)\, \dd \mu
	\leq C\int_{\Omega'}(a g_{u+\phi}^p+b g_{u+\phi}^q) \,\dd \mu.
\end{equation} 
for every bounded open subset $\Omega'$ of $\Omega$ with $\Omega'\Subset \Omega$ and for all functions $\phi \in N^{1,q}_0({\Omega'})$, where $C$ depends on $\alpha, \beta$ and $K$.
\end{lemma}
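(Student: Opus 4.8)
The plan is to obtain \eqref{4.5} by inserting into the quasiminimizing inequality \eqref{min} for each $u_i$ a competitor that coincides with $u+\phi$ on the bulk of $\Omega'$ and with $u_i$ in a thin collar inside $\partial\Omega'$, and then letting $i\to\infty$, using the strong $L^{q+\epsilon_0}$-convergence of $u_i$ and the weak convergence of $g_{u_i}$ from Lemma \ref{Lemma4.2}, the uniform local higher integrability of Lemma \ref{Lemma4.4}, and the lower semicontinuity of Lemma \ref{Lemma12.15JKbook} together with Remark \ref{alsoforp}. Two harmless reductions come first: since $\alpha\le a,b\le\beta$, it suffices to prove $\int_{\Omega'}(g_u^p+g_u^q)\,\dd\mu\le C\int_{\Omega'}(g_{u+\phi}^p+g_{u+\phi}^q)\,\dd\mu$ (the weights then only replace $C$ by a multiple of $C\beta/\alpha$); and, by density of compactly supported Lipschitz functions in $N^{1,q}_0(\Omega')$, continuity of the energy under $N^{1,q}(\Omega')$-convergence, and the fact that $g_{u+\phi}=g_u$ off $\operatorname{supp}\phi$, one may assume $\phi$ is Lipschitz with $D_0:=\operatorname{supp}\phi\Subset\Omega'$ and, replacing $\Omega'$ by a small $D_0(t)$ and enlarging $D_0$ slightly if needed, that $\mu(\partial\Omega')=\mu(\partial D_0)=0$.

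Next, fix an open $U$ with $D_0\Subset U\Subset\Omega'$ and a Lipschitz cut-off $\eta$ with $0\le\eta\le1$, $\eta\equiv0$ on $D_0$, $\eta\equiv1$ on $X\setminus U$, $g_\eta\le C_U$. For $i$ large, so that $p_i<q_i<q+\epsilon_0$ with $\epsilon_0$ from Lemma \ref{Lemma4.2}, I set $v_i:=u+\phi+\eta(u_i-u)$. By Lemma \ref{Lemma4.2}, $u,u_i\in N^{1,q+\epsilon_0}(\Omega)$, so $v_i\in N^{1,q+\epsilon_0}(\Omega)\subset N^{1,q_i}(\Omega')$, while $u_i-v_i=(1-\eta)(u_i-u)-\phi$ is supported in $\overline U\Subset\Omega'$ and hence lies in $N^{1,q_i}_0(\Omega')$; thus $v_i$ is admissible in \eqref{min} for $u_i$. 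Using that $v_i=u+\phi$ on $D_0$ and $g_{v_i}\le(1-\eta)g_u+\eta g_{u_i}+g_\eta|u-u_i|$ off $D_0$ (where $\phi\equiv0$, so $v_i=(1-\eta)u+\eta u_i$, this bound including $g_{v_i}=g_{u_i}$ where $\eta\equiv1$), together with $\alpha\le a,b\le\beta$, elementary convexity, and the inclusions $\Omega'\setminus U,\,U\setminus D_0\subset\Omega'\setminus D_0$, I expect to arrive at
\begin{align*}
\int_{\Omega'}(g_{u_i}^{p_i}+g_{u_i}^{q_i})\,\dd\mu & \le C\int_{D_0}(g_{u+\phi}^{p_i}+g_{u+\phi}^{q_i})\,\dd\mu+C\int_{\Omega'\setminus D_0}(g_u^{p_i}+g_u^{q_i}+g_{u_i}^{p_i}+g_{u_i}^{q_i})\,\dd\mu\\
& \quad+C_U\int_{U\setminus D_0}(|u-u_i|^{p_i}+|u-u_i|^{q_i})\,\dd\mu,
\end{align*}
with $C=C(\alpha,\beta,K)$.

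Then I pass to the limit $i\to\infty$ with $U,\eta$ frozen. On the left, Lemma \ref{Lemma12.15JKbook} and Remark \ref{alsoforp} give $\int_{\Omega'}(g_u^p+g_u^q)\,\dd\mu\le\liminf_i\int_{\Omega'}(g_{u_i}^{p_i}+g_{u_i}^{q_i})\,\dd\mu$. On the right, the last term tends to $0$ since $u_i\to u$ in $L^{q+\epsilon_0}(\Omega)$ and $C_U$ is fixed; the $D_0$-integral and the $g_u$-part of the second integral converge by dominated convergence (their integrands are dominated by $1+g_{u+\phi}^{q+\epsilon_0}$, resp.\ $1+g_u^{q+\epsilon_0}$, which are integrable); and the $g_{u_i}$-part is controlled by Lemma \ref{Lemma4.4} applied to the compact set $D:=\overline{\Omega'}\setminus\operatorname{int}D_0$, which contains $\Omega'\setminus D_0$: since $\Omega'\setminus D_0\subset D(s)$, one has $\limsup_i\int_{\Omega'\setminus D_0}(g_{u_i}^{p_i}+g_{u_i}^{q_i})\,\dd\mu\le C\int_{D(s)}(g_u^p+g_u^q)\,\dd\mu$, which tends to $C\int_D(g_u^p+g_u^q)\,\dd\mu=C\int_{\Omega'\setminus D_0}(g_u^p+g_u^q)\,\dd\mu$ as $s\downarrow0$ along admissible values, using $\mu(\partial\Omega')=\mu(\partial D_0)=0$. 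Collecting the terms and using $g_{u+\phi}=g_u$ on $\Omega'\setminus D_0$ yields $\int_{\Omega'}(g_u^p+g_u^q)\,\dd\mu\le C\int_{\Omega'}(g_{u+\phi}^p+g_{u+\phi}^q)\,\dd\mu$, and the first reduction finishes the proof.

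The hard part will be precisely this limit of the $g_{u_i}$-energy over $\Omega'\setminus D_0$: a naive lower-semicontinuity bound there points the wrong way, so Lemma \ref{Lemma4.4} is indispensable, and its use is what forces both the order of the limits ($i\to\infty$ before any further refinement) and the measure-zero-boundary normalization, so that $D$ and $\Omega'\setminus D_0$ carry the same energy. A secondary point requiring care is the reduction to Lipschitz, compactly supported $\phi$: it is what both gives the competitor $v_i$ zero boundary values on $\Omega'$ and ensures $\phi\in N^{1,q_i}_0(\Omega')$ even when $q_i>q$.
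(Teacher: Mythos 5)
Your proof is correct, and it uses the same core machinery as the paper (quasiminimizing inequality with a cut-off competitor, Lemma~\ref{Lemma4.4} for the $g_{u_i}$-energy on the transition region, Lemma~\ref{Lemma12.15JKbook} and Remark~\ref{alsoforp} for lower semicontinuity, $L^{q+\epsilon_0}$-convergence to kill the $|u-u_i|$ term), but the geometric arrangement is genuinely different. The paper enlarges the domain, taking $\Omega'\Subset\Omega''\Subset\Omega_0\Subset\Omega$, using a cut-off with $\eta\equiv1$ on a neighbourhood of $\overline{\Omega'}$ and $\eta\equiv0$ outside $\Omega''$, so that the transition region $\Omega''\setminus\overline{\Omega'}$ sits outside $\Omega'$; to control the terms there, it chooses $\Omega_0$ so that $\int_{\Omega_0\setminus\overline{\Omega'}}(g_u^p+g_u^q)\,\dd\mu<\epsilon$ and sends $\epsilon\to0$ at the end. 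You instead shrink inward: $D_0=\operatorname{supp}\phi\Subset U\Subset\Omega'$, with the transition collar $U\setminus D_0$ living inside $\Omega'$, so that $g_{u+\phi}=g_u$ on the whole transition region and the error terms can be absorbed directly into the right-hand energy $\int_{\Omega'}(g_{u+\phi}^p+g_{u+\phi}^q)\,\dd\mu$, eliminating the $\epsilon$-bookkeeping entirely. The price you pay is the measure-zero-boundary normalization (needed so that $\int_{D(s)}$ shrinks to an integral over a subset of $\Omega'$ as $s\downarrow0$; strictly speaking only $\mu(\partial\Omega')=0$ is needed, since $\Omega'\cap\partial D_0$ is already inside $\Omega'$), which the paper avoids because its problematic region is kept disjoint from $\overline{\Omega'}$ by construction. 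Both are clean; yours is slightly more economical in auxiliary domains, the paper's avoids the normalization step. One small remark: in your reduction to Lipschitz $\phi$, the claim that the energy is continuous under $N^{1,q}_0(\Omega')$-convergence deserves a line — it follows from $|\,\Vert g_{u+\phi_\epsilon}\Vert_{L^q}-\Vert g_{u+\phi}\Vert_{L^q}\,|\le\Vert g_{\phi_\epsilon-\phi}\Vert_{L^q}\to0$ — and is a point the paper also glosses over, so this is not a gap.
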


\begin{proof}
Firstly, we prove that inequality \eqref{4.5} holds for every $\phi \in {\rm Lip_C(\Omega')}$, where $\textrm{Lip}_C(\Omega')$ denotes the set of Lipschitz functions with compact support in $\Omega'$. Then, by an approximation approach, we deduce that inequality \eqref{4.5} is satisfied for all $\phi \in N^{1,q}_0(\Omega')$. Indeed, for every $\epsilon>0$ there exists $\phi_{\epsilon} \in {\rm Lip_C}(\Omega')$ such that $\|\phi_{\epsilon}-\phi\|_{ N^{1,q}(\Omega')}<\epsilon$, see Theorem 5.46 in \cite{BB}.

\noindent Let $\epsilon>0$ and $\Omega''$ and $\Omega_0$ open sets with $\Omega' \Subset \Omega'' \Subset \Omega_0 \Subset \Omega$ and 
\begin{equation*}
	\int_{\Omega_0\setminus \overline{\Omega'}} (g_u^p+g_u^{q}) \,\dd\mu < \epsilon.
\end{equation*}
 We consider a function $\phi_i$ defined as $\phi_i=\phi+\eta(u-u_i),$ where $\eta$ is a Lipschitz cut-off function defined as $\eta=1$ in a neighbourhood of $\overline{\Omega'}$ and $\eta=0$ in $\Omega \setminus \Omega''$. Notice that, for $i$ large enough,  $\phi_i \in N^{1,q_i}_0(\Omega'')$ because $\phi \in {\rm Lip_C(\Omega')}$ and $u_i, u \in N^{1,q+\epsilon_0}(\Omega)$. Using the quasiminimizing property \eqref{min} of $u_i$, with $u_i+\phi_i$ as test function, we have
\begin{align}\label{4.7}
	\int_{\Omega''} (ag_{u_i}^{p_i}+b g_{u_i}^{q_i}) \,\dd\mu &\leq K \int_{\Omega''} (a g_{u_i+\phi_i}^{p_i} +bg_{u_i+\phi_i}^{q_i}) \,\dd\mu \nonumber \\&\leq K \int_{\overline{\Omega'}} (ag_{u_i+\phi_i}^{p_i}+ bg_{u_i+\phi_i}^{q_i}) \,\dd\mu\nonumber \\ & \quad +K \int_{\Omega'' \setminus \overline{\Omega'}} (ag_{u_i+\phi_i}^{p_i}+bg_{u_i+\phi_i}^{q_i}) \,\dd\mu.
\end{align}
By the definition of $\eta$ in a neighborhood of $\overline{\Omega'}$, we obtain
\begin{equation}\label{4.8}
	u_i+\phi_i=u+\phi.
\end{equation}
Moreover,  $\phi=0$ in $\Omega'' \setminus \overline{\Omega'}$, thus $u_i+\phi_i=u_i+\eta (u-u_i)$ in $\Omega'' \setminus \overline{\Omega'}$. By applying Lemma 2.1 of \cite{MZG}, we get that $g_{u_i+\phi_i}\leq(1-\eta)g_{u_i}+\eta g_u+ g_{\eta}|u-u_i|$.
Then, we have
\begin{align}\label{4.9}
	\int_{\Omega'' \setminus \overline{\Omega'}} (ag_{u_i+\phi_i}^{p_i}+ bg_{u_i+\phi_i}^{q_i}) \,\dd\mu &\leq C  \int_{\Omega'' \setminus \overline{\Omega'}}\left((1-\eta)^{p_i}g_{u_i}^{p_i}+(1-\eta)^{q_i}g_{u_i}^{q_i}\right)\,\dd\mu\nonumber \\&\quad+ C  \int_{\Omega'' \setminus \overline{\Omega'}}\left(\eta^{p_i} g_u^{p_i}+\eta^{q_i} g_u^{q_i}\right) \,\dd\mu\nonumber\\&\quad+C  \int_{\Omega'' \setminus \overline{\Omega'}} \left(g_{\eta}^{p_i} |u-u_i|^{p_i}+g_{\eta}^{q_i} |u-u_i|^{q_i}\right)  \,\dd\mu,
\end{align}
where $C$ depends on $\beta$. 
We now consider the first term on the right–hand side of \eqref{4.9}. By the definition of $\eta$, we can choose a compact set $D \subset \overline{\Omega''}$ with $D \cap \overline{\Omega'}= \emptyset$ such that
\begin{equation*}
	\int_{\Omega'' \setminus \overline{\Omega'}}\left((1-\eta)^{p_i}g_{u_i}^{p_i}+(1-\eta)^{q_i}g_{u_i}^{q_i}\right)\,\dd\mu \leq \int_D (g_{u_i}^{p_i}+g_{u_i}^{q_i})\,\dd\mu.
\end{equation*}
We choose $t$ such that $D(t)\subset \Omega_0 \setminus \overline{\Omega'}$ and for which we can use Lemma \ref{Lemma4.4} to get
\begin{equation*}
	\limsup_{i \to \infty} \int_{D(t)}
	(g_{u_i}^{p_i} + g_{u_i}^{q_i}) \,\dd\mu\leq C \int_{D(t)} (g_u^p+ g_u^q)\,\dd\mu,
\end{equation*}
where $C$ depends also on $\alpha$ and $K$. By the choice of $\Omega_0$, this implies
\begin{align}\label{4.10}
	\limsup_{i \to \infty}\int_{\Omega'' \setminus \overline{\Omega'}}&\left((1-\eta)^{p_i}g_{u_i}^{p_i}+(1-\eta)^{q_i}g_{u_i}^{q_i}\right)\,\dd\mu\nonumber\\&\leq\limsup_{i \to \infty}\int_D (g_{u_i}^{p_i}+ g_{u_i}^{q_i})\,\dd\mu\nonumber\\ &\leq\limsup_{i \to \infty}\int_{D(t)} (g_{u_i}^{p_i}+ g_{u_i}^{q_i})\,\dd\mu\nonumber\\ & \leq C \int_{D(t)} (g_u^p+ g_u^q)\,\dd\mu \leq C \epsilon.
\end{align}
Now, we consider the second and third integrals on the right-hand side of \eqref{4.9}. Analogously, because of the definition of $\Omega_0$, we get
\begin{align}\label{4.11}
	\limsup_{i \to \infty} \int_{\Omega'' \setminus \overline{\Omega'}}(\eta^{p_i} g_u^{p_i}+\eta^{q_i} g_u^{q_i}) \,\dd\mu&\leq \int_{\Omega'' \setminus \overline{\Omega'}}(g_u^p+g_u^q) \,\dd\mu \nonumber \\ &\leq \int_{\Omega_0 \setminus \overline{\Omega'}} (g_u^p+ g_u^q) \,\dd\mu \leq \epsilon.
\end{align}
We recall that the minimal $q$-weak upper gradient of a Lipschitz function is bounded by its Lipschitz constant $\mu$-a.e., thus, by H\"older inequality and Lemma \ref{Lemma4.2}, we have
\begin{align}\label{4.12}
	\limsup_{i \to \infty}&\int_{\Omega'' \setminus \overline{\Omega'}} \left(g_{\eta}^{p_i} |u-u_i|^{p_i} +g_{\eta}^{q_i} |u-u_i|^{q_i}\right)  \,\dd\mu\nonumber \\ &\leq C \limsup_{i \to \infty}\int_{\Omega'' \setminus \overline{\Omega'}}  \left(|u-u_i|^{p_i} +|u-u_i|^{q_i}\right)  \,\dd\mu\nonumber \\ &\leq C \limsup_{i \to \infty} \left( \int_{\Omega'' \setminus \overline{\Omega'}}  |u-u_i|^{q+\epsilon_0}  \,\dd\mu\right)^{\frac{p_i}{q+\epsilon_0}}\nonumber \\ &\quad+C \limsup_{i \to \infty} \left( \int_{\Omega'' \setminus \overline{\Omega'}}  |u-u_i|^{q+\epsilon_0}  \,\dd\mu\right)^{\frac{q_i}{q+\epsilon_0}}=0.
\end{align}  
From \eqref{4.9}, together with the estimates \eqref{4.10}, \eqref{4.11} and \eqref{4.12}, we obtain 
\begin{equation}\label{4.13}
	\limsup_{i \to \infty} \int_{\Omega'' \setminus \overline{\Omega'}}\left( ag_{u_i+\phi_i}^{p_i}+ bg_{u_i+\phi_i}^{q_i}\right)\,\dd\mu\leq C\epsilon.
\end{equation}
Now, using \eqref{4.6}, Remark \ref{alsoforp}, \eqref{4.7}, \eqref{4.8} and \eqref{4.13}, we deduce
\begin{align}\label{4.14}
	\int_{\overline{\Omega'}}(ag_u^p+bg_u^q) \,\dd\mu &\leq \liminf_{i \to \infty} \int_{\Omega''}(ag_{u_i}^{p_i}+bg_{u_i}^{q_i})\,\dd\mu\nonumber\\&
	\leq	 C\liminf_{i \to \infty} \int_{\Omega''}(ag_{u_i+\phi_i}^{p_i}+bg_{u_i+\phi_i}^{q_i})\,\dd\mu\nonumber\\&
	\leq C\liminf_{i \to \infty}  \int_{\overline{\Omega'}} ag_{u+\phi}^{p_i}\,\dd\mu + C \liminf_{i \to \infty}  \int_{\Omega'' \setminus \overline{\Omega'}} ag_{u_i+\phi_i}^{p_i}\,\dd\mu\nonumber\\&\quad+C\liminf_{i \to \infty}  \int_{\overline{\Omega'}} bg_{u+\phi}^{q_i}\,\dd\mu  + C \liminf_{i \to \infty}  \int_{\Omega'' \setminus \overline{\Omega'}} bg_{u_i+\phi_i}^{q_i}\,\dd\mu\nonumber\\&\leq C \int_{\overline{\Omega'}} (ag_{u+\phi}^{p}+bg_{u+\phi}^{q})\,\dd\mu + C\epsilon,
\end{align}
Taking the limit as $\epsilon\rightarrow 0$, we conclude the proof for every $\phi \in {\rm Lip_C}(\Omega')$ and thus, by approximation, for every $\phi\in N^{1,q}_0(\Omega')$.
\end{proof}

Now, the proof of the stability result stated in Theorem \ref{Theorem 1.1.} follows immediately from the previous lemmata.

\end{document}